\pgfplotsset{compat=1.13}
\theoremstyle{plain}
\newtheorem{thm}{Theorem}[section]
\theoremstyle{plain}
\newtheorem{lem}[thm]{Lemma}
\newtheorem{cor}[thm]{Corollary}
\theoremstyle{definition}
\newtheorem{defi}[thm]{Definition}
\newtheorem{rem}[thm]{Remark}
\newtheorem{assumption}[thm]{Assumption}
\newtheorem{algo}[thm]{Algorithm}
\newtheorem{ex}[thm]{Example}
\newcommand{\D}{\ensuremath{\mathcal{D}}}
\newcommand{\eps}{\ensuremath{\varepsilon}}
\newcommand{\R}{\ensuremath{\mathbb{R}}}
\newcommand{\N}{\ensuremath{\mathbb{N}}}
\newcommand{\T}{\ensuremath{\mathbb{T}}}
\newcommand{\E}{\ensuremath{\mathbb{E}}}
\newcommand{\ga}{\alpha}
\newcommand{\gb}{\beta}
\newcommand{\gd}{\delta}
\renewcommand{\gg}{\gamma}
\newcommand{\gk}{\kappa}
\newcommand{\gl}{\lambda}
\newcommand{\go}{\omega}
\newcommand{\gs}{\sigma}
\newcommand{\gt}{\theta}
\newcommand{\gD}{\Delta}
\newcommand{\gG}{\Gamma}
\newcommand{\gO}{\Omega}
\newcommand{\cA}{\mathcal{A}}
\newcommand{\cB}{\mathcal{B}}
\newcommand{\cD}{\mathcal{D}}
\newcommand{\cL}{\mathcal{L}}
\newcommand{\cM}{\mathcal{M}}
\newcommand{\cN}{\mathcal{N}}
\newcommand{\cU}{\mathcal{U}}
\newcommand{\1}{\mathbf{1}}
\newcommand{\bP}{\mathbb{P}}
\newcommand{\be}{\begin{equation}}
\newcommand{\ee}{\end{equation}}
\numberwithin{equation}{section} \allowdisplaybreaks[1]
\newcommand{\KL}{{Karhunen-Lo\`{e}ve }}
\newcommand*{\invtr}{\ensuremath{^-~\!\!'}}
\definecolor{darkgreen}{rgb}{0,.6,0}
\title[Approximation and simulation of infinite-dimensional L\'evy processes]{
Approximation and simulation of Infinite-dimensional L\'evy processes
}
\thanks{%
The research leading to these results has received funding from the German Research Foundation (DFG) as part of the Cluster of Excellence in Simulation Technology (EXC 310/2) at the University of Stuttgart and by the Juniorprofessorship program of Baden--W\"urttemberg, and it is gratefully acknowledged. The authors would like to thank Denis Talay for the fruitful discussions and helpful comments, as well as for the remarks of the anonymous reviewers which led to a significant improvement of the manuscript. }
\author[Barth]{Andrea Barth}
\address[Andrea Barth]{\newline SimTech, University of Stuttgart\newline Allmandring 5b\newline 70569 Stuttgart, Germany}
\email[]{andrea.barth\@@mathematik.uni-stuttgart.de}
\author[Stein]{Andreas Stein}
\address[Andreas Stein]{\newline SimTech, University of Stuttgart\newline Allmandring 5b\newline 70569 Stuttgart, Germany}
\email[]{andreas.stein\@@mathematik.uni-stuttgart.de}
\begin{document}

\begin{abstract}
In this paper approximation methods for infinite-dimensional L\'evy processes, also called (time-dependent) L\'evy fields, are introduced. 
For square integrable fields beyond the Gaussian case, it is no longer given that the one-dimensional distributions in the spectral representation with respect to the covariance operator are independent. When simulated via a \KL expansion a set of dependent but uncorrelated one-dimensional L\'evy processes has to be generated. The dependence structure among the one-dimensional processes ensures that the resulting field exhibits the correct point-wise marginal distributions. To approximate the respective (one-dimensional) L\'evy-measures, a numerical method, called discrete Fourier inversion, is developed. For this method, $L^p$-convergence rates can be obtained and,
under certain regularity assumptions, mean square and $L^p$-convergence of the approximated field is proved. Further, a class of (time-dependent) L\'evy fields is introduced, where the point-wise marginal distributions are dependent but uncorrelated subordinated Wiener processes. For this specific class one may derive point-wise marginal distributions in closed form. Numerical examples, which include hyperbolic and normal-inverse 
Gaussian fields, demonstrate the efficiency of the approach.
\end{abstract}

\keywords{Infinite-dimensional L\'evy processes, \KL expansion, subordinated processes, stochastic partial differential equations}

\maketitle
\section{Introduction}\label{sec:Intro}

Uncertainty quantification plays an increasingly important role in a wide range of problems in the Engineering Sciences and Physics. Examples of sources of uncertainty are imprecise or insufficient measurements and noisy data. In the underlying dynamical system this is modeled via a stochastic operator, stochastic boundary conditions and/or stochastic data. As an example, to model subsurface flow more realistically the coefficients of an (essentially) elliptic equation are assumed to be stochastic. A common approach in the literature is to use (spatially) correlated random fields that are built from uniform distributions or colored log-normal fields. The resulting point-wise marginal distributions of the field are (shifted) normally, resp. log-normally distributed. Neither choice is universal enough to accommodate all possible types of porosity, especially not if fractures are incorporated (see~\cite{ZK04}). In some applications it might even be necessary that the point-wise marginal distribution of the (time-dependent) random field is a pure-jump process (see~\cite{BB14}). Here, we denominate by \emph{point-wise marginal distributions} the distributions resp. processes one obtains by evaluation of the random field at a fixed spatial point. On a note, these are in general the distributions that may be measured in applications.
 
In the case of a (time-dependent) Gaussian random field, the approximation and simulation via its \KL (KL) expansion is straightforward. Almost sure and $L^p$-convergence in terms of the decay of the eigenvalues has been shown for truncated KL-expansions in~\cite{BaLa12_2}.  
For infinite-dimensional L\'evy processes, also called \textit{L\'evy fields}, the approximation may still be attempted via the KL expansions: On a separable Hilbert space $(H,(\cdot,\cdot)_H)$ with orthonormal basis $(e_i,i\in\N)$, a square-integrable L\'evy field $L = (L(t)\in H,t\ge 0)$ admits the expansion 
\begin{equation*}
L(t)=\sum_{i\in\N} (L(t),e_i)_H e_i,
\end{equation*}
The sequence $((L(\cdot),e_i)_H, i\in N)$ consists of one-dimensional, real-valued L\'evy processes.
In contrast to the case of a Gaussian field, the one-dimensional processes $((L(t),e_i)_H,t\ge0)$ in the spectral representation are not independent but merely uncorrelated. 
If one were to use independent L\'evy processes, the resulting field would not have the desired point-wise marginal distributions and the KL expansion would, therefore, not converge to the desired L\'evy field.
To circumvent this issue, we approximate $L$ by truncating the series after finite number of terms and generate dependent but uncorrelated processes $((L(t),e_i)_H,t\ge0)$.

This entails, however, the simulation of one-dimensional L\'evy processes. A common way to do so, is to employ the so called \textit{compound Poisson approximation} (CPA) (see~\cite{AR01,F11,MS07,R97,S03} or the references therein).
Mean-square convergence results for the CPA are available in some cases, but require rather strong assumptions on the underlying process.
In addition, the obtained convergence rates are comparably low with respect to the employed time discretization, which implies that the CPA may not be suitable to sample processes involving computationally expensive components.
As one of the main contributions in this paper, we develop a novel approximation method for one-dimensional L\'evy processes.
This new approach, based on L\'evy bridge laws and Fourier inversion, addresses the abovementioned problems. 
We prove $L^p$- and almost surely convergence of the approximation under relatively weak assumptions and derive precise error bounds.
We show mean-square convergence of the approximation to a given infinite-dimensional L\'evy process by combining the Fourier inversion method with an appropriate truncation of the KL expansion. 

To obtain a set of dependent but uncorrelated one-dimensional processes, we utilize multi-dimensional time-changed Brownian motions.  
The underlying variance process is represented by a positive and increasing L\'evy process, a so-called \textit{subordinator}.
As a class of subordinated processes, we consider \textit{generalized hyperbolic (GH) L\'evy processes}, that are based on the generalized hyperbolic distribution and cover for example \textit{normal inverse Gaussian} (NIG) and \textit{hyperbolic} processes.
These processes are widely used in applications such as Mathematical Finance, Physics and Biology (see, for instance,~\cite{BN77,BB14,B81,E01,EK95}). With its fat-tailed distribution a GH-field may also be of value in the modeling of subsurface flows (see~\cite{ZK04}). 
For an overview on subordinated, Hilbert space-valued L\'evy processes we refer to \cite{BZ10,PR05}, where this topic is treated in a rather general setting. 
Among other subordinated Wiener processes, the construction of an infinite-dimensional NIG process can be found in \cite{BK15}. 
As a further contribution of this paper, we approximate the corresponding GH L\'evy fields via truncated KL expansions with dependent but uncorrelated GH-distributed one-dimensional processes and show that the approximation converges to an infinite-dimensional GH process.
From a simulatory point of view this entails the generation of a certain number of one-dimensional processes with a given set of parameters.
Conversely, we introduce a second approach, where we derive the dependence structure of the multi-dimensional GH process to obtain 
admissible sets of parameters such that the one-dimensional marginal GH processes are decorrelated and follow a desired distribution. 
Using the Fourier inversion method we are able to simulate GH fields efficiently, even if a large number of one-dimensional GH processes is necessary. 

This article is structured as follows: 
Section~\ref{sec:pre} contains preliminaries on L\'evy processes taking values in Hilbert spaces and the main convergence theorem for the approximation. 
In Section~\ref{sec:GIG_sim}, we present a new approach for the approximation of one-dimensional L\'evy processes by L\'evy bridge laws and prove $L^p$- and almost sure convergence. 
To be able to apply the algorithm in a very general setting, we introduce an extension by using Fourier inversion techniques and show how to control the $L^p$-error.
We proceed by investigating the class of GH L\'evy processes and state the necessary conditions for the approximated field to have point-wise GH distributed marginals.
In Section~\ref{sec:num}, we remark on some implementational details of the algorithm and conclude with NIG- and hyperbolic fields as numerical examples.

\section{Preliminaries}\label{sec:pre}
Throughout this paper, we consider a time interval $\T:=[0,T]$, with $T>0$, and a filtered probability space $(\gO,(\mathcal A_t,t\geq 0), \bP)$ satisfying the usual conditions. Let $(H,(\cdot,\cdot)_H)$ be a separable Hilbert space and $(H,\cB(H))$ a measurable space, where $\cB(H)$ denotes the Borel $\gs$-algebra on $H$.
A L\'evy process taking values in $(H,(\cdot,\cdot)_H)$ is defined as follows (see~\cite{PZ07}):
\begin{defi}
A $H$-valued stochastic process $L=(L(t),t\in\T)$ is called L\'evy process\footnote{In the case that $H$ is an infinite-dimensional Hilbert space, sometimes $L$ is also called \textit{L\'evy field} to have a clear distinction from finite-dimensional L\'evy processes.} if 
\begin{itemize}
 \item $L$ has stationary and independent increments,
 \item $L(0)=0$ $\bP$-almost surely and
 \item $L$ is stochastically continuous, i.e. for all $\eps>0$ and $t\in\T$ holds
 \begin{equation*}
  \lim\limits_{s\to t, s\in\T} \bP(||L(t)-L(s)||_H>\eps)=0.
 \end{equation*}
\end{itemize}                                    
\end{defi}

The characteristic function of a L\'evy process is then given by the \textit{L\'evy-Khintchine formula}:
\begin{equation*}
 \E[\exp(i(h,L(t))_H)]=\exp(t\Psi_L(h)),\quad \text{for }h\in H,
\end{equation*}
where the exponent is of the form
\begin{equation}\label{eq:LKF}
 \Psi_L(h)=i(\iota_H,h)_H-\frac{1}{2}(\Sigma_H h,h)_H+\int_H \exp(i(h,y)_H)-1-i(h,y)_H\1_{||y||_H<1}\nu_H(dy)
\end{equation}
(see ~\cite[Thm. 4.27]{PZ07}).
In Eq.~\eqref{eq:LKF}, $\iota_H\in H$, $\Sigma_H$ is a symmetric, non-negative and nuclear operator on $H$ and $\nu_H:\cB(H)\to [0,\infty)$ is a non-negative, $\gs$-finite measure on $\cB(H)$ satisfying
\begin{equation*}
\nu_H(\{0\})=0\quad\text{and}\quad\int_H\min(1,||y||_H^2)\,\nu(dy)<\infty.
\end{equation*}
The triplet $(\iota_H,\Sigma_H,\nu_H)$ is unique for every L\'evy process $L$ and called the \textit{characteristic triplet}.
For the special case of a one-dimensional L\'evy process $\ell=(\ell(t),t\in\T)$, the L\'evy-Khintchine formula simplifies to
\begin{equation}\label{eq:LKD}
\E[\exp(iu\ell(t))]=\exp\left(t(\iota ui -\frac{\gs^2}{2}u^2+\int_\R\exp(iuy)-1-iuy\1_{|y|<1}d\nu(y)) \right),\quad u\in\R ,
\end{equation}
where $\iota\in\R$, $\gs^2>0$ and $\nu$ is a ($\gs$-finite) measure on $\cB(\R)$ satisfying
\begin{equation*}
\nu(\{0\})=0\quad\text{and}\quad\int_\R\min(1,y^2)\nu(dy)<\infty,
\end{equation*}
see for instance~\cite{BMR12} or~\cite{S99}.
The notation $\ell$ is introduced for finite-dimensional L\'evy processes to have a clear distinction from the (possibly infinite-dimensional) L\'evy process $L$.

If $W$ is a $H$-valued L\'evy field with characteristic triplet $(0,\Sigma_H,0)$, then $W$ is called \textit{$\Sigma_H$-Wiener process}. If, further, $\Sigma_H$ is symmetric, non-negative and nuclear (see Eq.~\eqref{eq:LKF}) it admits, by the Hilbert-Schmidt theorem, the spectral decomposition
\begin{equation*}
 \Sigma_H \hat e_i = \hat\rho_i \hat e_i.
\end{equation*}
Here, $((\hat\rho_i,\hat e_i),i\in\N)$ is the sequence of eigenpairs of $\Sigma_H$, where the eigenvalues $\hat\rho_i$ are positive with zero as their only accumulation point and the sequence $(\hat e_i,i\in\N)$ forms an orthonormal basis of $H$.
For convenience, we assume that the sequence of eigenvalues $(\hat\rho_i,i\in\N)$ is given in decaying order.
The $\Sigma_H$-Wiener process $W$ admits then a unique expansion (also called Karhunen--Lo\`eve expansion)
\begin{equation*}
 W(t) = \sum_{i\in\N} \sqrt{\hat\rho_i} \hat e_i w_i(t),
\end{equation*}
where $(w_i,i\in\N)$ is a sequence of independent, one-dimensional, real-valued Brownian motions. 
An obvious way to approximate $W$ is, therefore, given by the truncated series 
\begin{equation*}
 W_N(t) := \sum_{i=1}^N \sqrt{\hat\rho_i} \hat e_i w_i(t).
\end{equation*}
It can be shown that the approximations $(W_N,N\in\N)$ converge in $L^2(\gO;H)$ and almost surely to the $\Sigma_H$-Wiener process $W$ (see for instance~\cite{BL12}).
For the approximation of general (non-continuous) processes $L$, we aim to apply a similar approach. We assume $L$ is square-integrable, as otherwise $L$ does not admit a KL expansion. For series representations of cylindrical L\'evy processes we refer to~\cite{AR10}, KL expansions for white noise L\'evy fields may be found in~\cite{DHP12}.
A $H$-valued stochastic process $(L(t),t\in\T)$ is said to be square-integrable if $||L(t)||_{L^2(\gO;H)}:=\E(||L(t)||_H^2)<+\infty$ for all $t\in\T$. 
Obviously, mean-square convergence can only be well-defined for processes with his property. 
\begin{thm}(\cite[Theorem 4.44]{PZ07})\label{thm:covQ}
 Let $L$ be a square-integrable L\'evy process on $H$. Then there exists a $m\in H$ and a non-negative, symmetric trace class operator $Q$ on $H$ such that for all $h_1,h_2\in H$ and $s,t\in(0,T]$
 \begin{itemize}
  \item $\E((L(t),h_1)_H)=t(m,h_1)_H$,
  \item $\E((L(t)-mt,h_1)_H(L(s)-ms,h_2)_H)=\min(t,s)(Qh_1,h_2)_H$
  \item $\E(||L(t)-mt||_H^2)=t\: tr(Q)$,
 \end{itemize}
where $tr(Q)$ denotes the trace of $Q$. The operator $Q$ is also called covariance operator of $L$ and $m$ is called mean.
\end{thm}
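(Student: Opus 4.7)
The plan is to deduce the linear mean and the bilinear covariance structure directly from the defining properties of a L\'evy process, starting with the mean. Since $L$ is square-integrable on $\T$, it is in particular Bochner-integrable, so $\mu(t):=\E[L(t)]\in H$ is well defined. Using $L(0)=0$ together with stationarity and independence of increments gives
\begin{equation*}
\mu(t+s)=\E[L(t+s)-L(s)]+\E[L(s)]=\mu(t)+\mu(s),
\end{equation*}
so $\mu$ satisfies Cauchy's functional equation. Stochastic continuity combined with $L^2(\gO;H)$-boundedness of $\{L(t),t\in\T\}$ (which furnishes uniform integrability) propagates to continuity of $\mu$, and a continuous additive map on $[0,T]$ is linear. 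Setting $m:=\mu(1)\in H$ therefore yields $\E((L(t),h_1)_H)=(\mu(t),h_1)_H=t(m,h_1)_H$.

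Next, let $M(t):=L(t)-tm$, a centred, square-integrable process that inherits stationary independent increments from $L$. For fixed $h_1,h_2\in H$, define $g(s):=\E[(M(s),h_1)_H(M(s),h_2)_H]$. Decomposing $M(s+t)=M(s)+(M(s+t)-M(s))$ and expanding the product, independence of the two summands together with $\E[M(s+t)-M(s)]=0$ makes the cross terms vanish, yielding $g(s+t)=g(s)+g(t)$. As before, stochastic continuity and square integrability ensure that $g$ is continuous, hence $g(s)=s\,g(1)$. The map $B:(h_1,h_2)\mapsto g(1)$ is a symmetric, non-negative bilinear form on $H$ with $|B(h_1,h_2)|\le\E\|M(1)\|_H^2\,\|h_1\|_H\|h_2\|_H$, so the Riesz representation theorem produces a bounded, symmetric, non-negative operator $Q$ on $H$ with $B(h_1,h_2)=(Qh_1,h_2)_H$. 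For $s\le t$, the same splitting argument gives
\begin{equation*}
\E\bigl[(M(t),h_1)_H(M(s),h_2)_H\bigr]=\E\bigl[(M(s),h_1)_H(M(s),h_2)_H\bigr]=s(Qh_1,h_2)_H=\min(s,t)(Qh_1,h_2)_H,
\end{equation*}
which is the desired covariance identity.

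For the trace-class property and the third item, I would fix an orthonormal basis $(e_i,i\in\N)$ of $H$ and apply Parseval's identity to obtain
\begin{equation*}
\sum_{i\in\N}(Qe_i,e_i)_H=\sum_{i\in\N}\E[(M(1),e_i)_H^2]=\E\|M(1)\|_H^2<\infty,
\end{equation*}
showing that $Q$ is trace class with $tr(Q)=\E\|M(1)\|_H^2$. Applying the same computation at an arbitrary $t\in\T$ and using the covariance formula yields $\E\|M(t)\|_H^2=\sum_{i\in\N}t(Qe_i,e_i)_H=t\,tr(Q)$.

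The main obstacle I anticipate is justifying the continuity of the additive maps $\mu$ and $g$ from merely stochastic continuity of $L$. This requires combining stochastic continuity with uniform integrability of $\|L(t)\|_H$, respectively $\|L(t)\|_H^2$, on compact time intervals, a fact that is standard but non-trivial for L\'evy processes and typically follows from the L\'evy--It\^o decomposition or directly from~\eqref{eq:LKF} under square-integrability. Once continuity is secured, the remainder of the argument reduces to routine Cauchy-equation and Riesz-representation bookkeeping.
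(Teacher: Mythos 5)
The paper does not prove this statement at all: it is imported verbatim from \cite[Theorem 4.44]{PZ07}, so your argument can only be compared with the standard textbook proof, which works through the real-valued projections $(L(t),h)_H$ and their classical moment identities (obtained from the L\'evy--Khintchine exponent and the martingale decomposition) before assembling $m$ and $Q$; you instead run Cauchy-functional-equation arguments directly in $H$, which is a perfectly viable and essentially self-contained alternative. The one place where your sketch is not yet airtight is exactly the step you flag, and the fix you propose is slightly off: to pass from stochastic continuity to continuity of $g$ you need uniform integrability of the products $(M(t),h_1)_H(M(t),h_2)_H$, i.e.\ of $\|M(t)\|_H^2$, and this does not follow from $L^2$-boundedness of $M(t)$ alone (boundedness in $L^2$ only gives uniform integrability of $\|M(t)\|_H$ at the $L^1$ level). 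Two clean repairs: (i) once the mean is known to be linear, $M(t)=L(t)-tm$ is an $H$-valued martingale, so $\|M(t)\|_H^2\le\E\big(\|M(T)\|_H^2\,\big|\,\cA_t\big)$ by conditional Jensen, and a family dominated by conditional expectations of a fixed integrable random variable is uniformly integrable; or (ii) avoid continuity altogether: for $h_1=h_2=h$ the additive map $s\mapsto\E\big((M(s),h)_H^2\big)$ is nonnegative, hence monotone, hence linear in $s$, and polarization yields the mixed case, while for the mean the measurability of $t\mapsto(\E(L(t)),h)_H$ (or the scalar characteristic-function computation) already forces linearity without any uniform-integrability argument. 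With either repair, the remaining steps of your proposal (vanishing cross terms by independence, Riesz representation of the bounded symmetric nonnegative form to get $Q$, and Parseval plus monotone convergence for $tr(Q)=\E\|M(1)\|_H^2<\infty$ and the identity $\E\|M(t)\|_H^2=t\,tr(Q)$) are correct.
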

Note that $Q$ in Theorem~\ref{thm:covQ} is not necessarily equal to the operator $\Sigma_H$ from the L\'evy-Khintchine formula~\eqref{eq:LKF}.
They are only equal if the measure $\nu_H$ is zero, meaning the process $L$ has no ``jump component''\footnote{This results in $L$ being a drifted $H$-valued Gaussian process.}.
The operator $Q$ admits a spectral decomposition with a sequence $((\rho_i,e_i),i\in\N)$ of orthonormal eigenpairs with non-negative eigenvalues.
Thus, $L$ has the spectral expansion
\begin{equation*}
L(t)=\sum_{i\in\N}(L(t),e_i)_He_i, 
\end{equation*}
where the one-dimensional L\'evy processes $((L(t),e_i)_H,t\in\T)$ are not independent, but merely uncorrelated (see~\cite[Section 4.8.2]{PZ07}).
For the approximation of $L$ we employ one-dimensional L\'evy processes $(\sqrt{\rho_i}\ell_i, i\in\N)$, so that $\sqrt{\rho_i}\ell_i(t)$ is equal to $(L(t),e_i)_H$ in distribution for all $t\in\T$ and all $i\in\N$, and define, for $N\in\N$, the truncated sum
\begin{equation*}
L_N(t):=\sum_{i=1}^N\sqrt{\rho_i} e_i\ell_i(t).
\end{equation*}
If the spectral basis $((\sqrt{\rho_i}e_i),i\in\N)$ of $H$ is given, the approximation of $L$ by $L_N$ reduces to the simulation of dependent but uncorrelated one-dimensional processes $\ell_i$.
In general, the processes $\ell_i$ have \textit{infinite activity}, i.e. $\bP$-almost all paths of the process $(\ell_i(t),t\in\T)$ have an infinite number of jumps in every compact time interval.
Popular examples of L\'evy processes with infinite activity are normal inverse Gaussian processes or hyperbolic processes, see~\cite{E01}.  
As it is not possible to simulate infinitely many jumps, we need to find a suitable approximation $\widetilde\ell_i$ of $\ell_i$ and define
\begin{equation*}
\widetilde L_N(t) := \sum_{i=1}^N\sqrt{\rho_i} e_i \widetilde\ell_i(t).
\end{equation*}
In the following, we derive a condition on the approximations $\widetilde\ell_i$ that ensures convergence of $\widetilde L_N$ to $L$ in $L^2(\gO;H)$ uniformly on $\T$.
Throughout this paper, we construct approximations  $\widetilde\ell_i$ from a skeleton of discrete realizations at fixed and equidistant points in $\T$.
To this end, we introduce, for given $n\in\N$, a time increment $\gD_n:=T/2^n$ and the set $\Xi_n := \{t_j:=j\Delta_n, \; j=0,\ldots,2^n\}$. By $\widetilde\ell_i^{(n)}$ we denote some piecewise-constant c\`adl\`ag approximation of the process $\widetilde\ell_i$ (for a construction see Section~\ref{sec:GIG_sim}).

\begin{thm}\label{thm:H_error}
Let $L=(L(t),t\in\T)$ be a square-integrable, $H$-valued L\'evy process. 
The covariance operator $Q$ of $L$ admits a spectral decomposition by a sequence of (orthonormal) eigenpairs $((\rho_i,e_i),i\in\N)$.
Assume that, for $n\in\N$, there exists a sequence of approximations $(\widetilde\ell_i^{(n)},i\in\N)$ of the one-dimensional processes $(\ell_i,i\in\N)$ on the interval $\T$, such that the $L^2(\gO;\R)$-approximation error can be bounded by 
\begin{equation}\label{eq:C_ell}
\sup_{t\in\T}\E(|\ell_i(t)-\widetilde\ell_i^{(n)}(t)|^2)\le C_\ell\gD_n,
\end{equation}
where the constant $C_\ell>0$ is independent of $i$.
If, for all $i\in\N$, the processes $\sqrt\rho_i\ell_i$ are in distribution equal to $(L(\cdot),e_i)_H$ then the sequence of approximations $(\widetilde L_N(t), N\in\N)$ converges in mean-square-sense to $L(t)$, for each $t\in\T$, and the error is bounded by
\begin{equation*} 
\sup_{t\in\T}\E(||L(t)-\widetilde L_N(t)||^2_H)^{1/2}\le \big(T\sum_{i=N+1}^\infty\rho_i\big)^{1/2}+\big(C_\ell\gD_n\sum_{i=1}^N\rho_i\big)^{1/2}.
\end{equation*}
\end{thm}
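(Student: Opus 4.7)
The plan is to split the error $L(t)-\widetilde L_N(t)$ into a \emph{truncation} part (the tail of the KL expansion beyond index $N$) and an \emph{approximation} part (the first $N$ modes, where the true one-dimensional processes are replaced by their numerical surrogates). Since the eigenfunctions $(e_i, i\in\N)$ form an orthonormal basis of $H$, these two parts live in orthogonal subspaces of $H$, so I expect Parseval's identity to reduce the $H$-norm to a clean sum of real-valued squared errors that can be treated mode by mode.

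Concretely, I identify the real-valued L\'evy process $\sqrt{\rho_i}\ell_i(t)$ with the coordinate process $(L(t),e_i)_H$ on a common probability space (which is the natural coupling assumed throughout the paper). Then
\begin{equation*}
L(t)-\widetilde L_N(t) \;=\; \sum_{i=1}^N \sqrt{\rho_i}\,(\ell_i(t)-\widetilde\ell_i^{(n)}(t))\,e_i \;+\; \sum_{i=N+1}^\infty (L(t),e_i)_H\, e_i,
\end{equation*}
and by Parseval
\begin{equation*}
\E\bigl(\|L(t)-\widetilde L_N(t)\|_H^2\bigr) \;=\; \sum_{i=1}^N \rho_i\,\E\bigl(|\ell_i(t)-\widetilde\ell_i^{(n)}(t)|^2\bigr) \;+\; \sum_{i=N+1}^\infty \E\bigl(|(L(t),e_i)_H|^2\bigr).
\end{equation*}
The first sum is bounded, via the standing hypothesis \eqref{eq:C_ell}, by $C_\ell\gD_n\sum_{i=1}^N \rho_i$ uniformly in $t\in\T$. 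For the second sum I invoke Theorem~\ref{thm:covQ}: the variance of $(L(t),e_i)_H$ equals $t(Qe_i,e_i)_H=t\rho_i$, so (for a centered field, or after subtracting the deterministic drift $mt$) the tail is bounded by $t\sum_{i>N}\rho_i\le T\sum_{i>N}\rho_i$. Combining the two bounds, taking a supremum in $t$, and using subadditivity of the square root, $\sqrt{a+b}\le\sqrt a+\sqrt b$, yields the stated estimate.

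The main point requiring care is the coupling: the hypothesis only gives $\sqrt{\rho_i}\ell_i\stackrel{d}{=}(L(\cdot),e_i)_H$, so the pointwise error $\ell_i(t)-\widetilde\ell_i^{(n)}(t)$ must be interpreted on the canonical realization $\ell_i(t):=(L(t),e_i)_H/\sqrt{\rho_i}$ on which $\widetilde\ell_i^{(n)}$ is built. Once this identification is made, orthogonality of $(e_i)$ removes all cross-terms in the squared $H$-norm, and the rest is a routine combination of the two ingredients above. No exchange-of-summation issues arise, because the tail sum converges by nuclearity of $Q$ ($\sum_i\rho_i=\mathrm{tr}(Q)<\infty$), which also guarantees that the series $\widetilde L_N$ is well-defined in $L^2(\Omega;H)$.
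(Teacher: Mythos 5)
Your proposal is correct and follows essentially the same route as the paper: split $L(t)-\widetilde L_N(t)$ into the KL truncation tail, controlled by Theorem~\ref{thm:covQ} via $\E((L(t),e_i)_H^2)=t\rho_i$, and the first $N$ modes, controlled by \eqref{eq:C_ell}, using the same coupling $\sqrt{\rho_i}\,\ell_i(t)=(L(t),e_i)_H$ that the paper's proof uses implicitly. The only (cosmetic) difference is that you exploit orthogonality of the two parts to get an exact Parseval splitting of the squared $H$-norm and then apply $\sqrt{a+b}\le\sqrt a+\sqrt b$, whereas the paper applies the triangle inequality in $L^2(\gO;H)$ first and bounds the two norms separately; both yield the stated estimate.
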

\begin{proof}
We may assume without loss of generality that the process $L$ has zero mean. 
Using the triangle inequality, the error term $\E(||L(t)-\widetilde L_N(t)||^2_H)$ can be split into 
\begin{equation*} 
\E(||L(t)-\widetilde L_N(t)||^2_H)^{1/2}\le\E(||L(t)-L_N(t)||^2_H)^{1/2}+\E(||L_N(t)-\widetilde L_N(t)||^2_H)^{1/2}.
\end{equation*}
The square-integrability of $L$ guarantees that $Q$ is trace class and has positive eigenvalues, i.e. $tr(Q)=\sum_{i\in\N}\rho_i<+\infty$. 
$L(t)$ has covariance $tQ$, which yields for the first error term 
\begin{align*}
\E(||L(t)-L_N(t)||^2_H)&=\E(||L(t)||_H^2)+\E(||L_N(t)||_H^2)-2\E((L(t),L_N(t))_H)\\
&= t \;tr(Q)+\E\Big(\sum_{i,j=1}^N\big( (L(t),e_i)_He_i,(L(t),e_j)_He_j \big)_H\Big)\\
&\quad-2\E\big(\sum_{i=1}^N(L(t),(L(t),e_i)_He_i)_H\big)\\
&=t\sum_{i=1}^\infty\rho_i+\sum_{i=1}^N\E\big((L(t),e_i)_H^2\big)-2\sum_{i=1}^N\E\big((L(t),e_i)_H^2\big)\\
&=t\sum_{i=1}^\infty\rho_i-\sum_{i=1}^N\E\big((L(t),e_i)_H^2\big).
\end{align*}
With Theorem~\ref{thm:covQ} we obtain
\begin{equation*}
\E((L(t),e_i)_H^2)=t(Qe_i,e_i)_H=t\rho_i,
\end{equation*}
and hence
\begin{equation*}
\sup_{t\in\T}\E(||L(t)-L_N(t)||^2_H)= \sup_{t\in\T} t\sum_{i=N+1}^\infty\rho_i = T\sum_{i=N+1}^\infty\rho_i.
\end{equation*}
As $Q$ is a trace class operator, the sum on the right hand side becomes arbitrary small as $N\to\infty$.
This implies that $L_N$ converges in $L^2(\gO;H)$ uniformly on $\T$ to $L$. 

For the second error term, we derive with the assumption that $\sqrt\rho_i\ell_i \stackrel{\cL}{=} (L(\cdot),e_i)_H$ for all $i\in\N$ and Ineq.~\eqref{eq:C_ell}
\begin{align*}
\sup_{t\in\T}\E(||L_N(t)-\widetilde L_N(t)||^2_H)
&=\sup_{t\in\T}\sum_{i,j=1}^N\E\big(\sqrt{\rho_i\rho_j}(\ell_i(t)-\widetilde\ell_i^{(n)}(t))(\ell_j(t)-\widetilde\ell_j^{(n)}(t))(e_i,e_j)_H\big)\\
&=\sum_{i=1}^N\rho_i||e_i||_H^2\sup_{t\in\T}\E(|\ell_i(t)-\widetilde\ell_i^{(n)}(t)|^2)\le C_\ell\gD_n\sum_{i=1}^N\rho_i,
\end{align*}
which proves the claim. Above and for the remainder of the paper we express equality in distribution by the relation $\stackrel{\cL}{=}$. 
\end{proof}
\begin{rem}\label{rem:trunc}
Theorem~\ref{thm:H_error} states that the approximation $\widetilde L_N$ converges in $L^2(\gO;H)$ to $L$ uniformly on $\T$, 
for $N\to\infty$ and in the case that Ineq.~\eqref{eq:C_ell} holds with a constant $C_\ell$ in the limit $\gD_n\to 0$. 
We may equilibrate both error contributions by choosing $N\in\N$ such that
\begin{equation}\label{eq:trunc} 
T\sum_{i=N+1}^\infty\rho_i \approx C_\ell\gD_n\sum_{i=1}^N\rho_i.
\end{equation}
The sum of the eigenvalues, $tr(Q)$, is often known a priori (for example if $Q$ is a covariance operator of the Mat\'ern class, see Section~\ref{sec:num}). Then, only the first $N$ eigenvalues have to be determined until Eq.~\eqref{eq:trunc} is fulfilled. Further, optimal values for $\gD_n$ and $N$ may be chosen for given $C_\ell$ and $(\rho_i,i\in\N)$.
\end{rem}
Theorem~\ref{thm:H_error} may be generalized in an $L^p$-sense (the supremum is omitted for simplicity).
\begin{cor}
Let the assumptions of Theorem~\ref{thm:H_error} be fulfilled and, for $p\geq2$, $\E(||L(t)||^p)<+\infty$ for each $t\in\T$,
$\sum_{i\in\N}\rho_i^{p/2}<+\infty$ and 
\begin{equation*}
\E(|\ell_i(t)-\widetilde\ell_i^{(n)}(t)|^p)\le C_{p,\ell}\gD_n,
\end{equation*}
for some $C_{p,\ell}>0$ independent of $i$. Then, the $L^p(\Omega;H)$-error is bounded by
\begin{align*} 
\E(||L(t)-\widetilde L_N(t)||^p_H)^{1/p}&\le \big(\sum_{i>N} \rho_i\big)^{1/2-1/p}\Big(\sum_{i>N} \rho_i^{p/2} \E(|\ell_i(t)|^p)\Big)^{1/p}\\
&\quad+ (\sum_{i=1}^N \rho_i\big)^{1/2-1/p}\big(C_{\ell,p}\gD_n\sum_{i=1}^N \rho_i^{p/2}\Big)^{1/p}.
\end{align*}
\end{cor}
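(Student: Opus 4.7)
The plan is to mirror the argument of Theorem~\ref{thm:H_error} but work throughout in $L^p(\gO;\cdot)$ instead of $L^2(\gO;\cdot)$. By the triangle inequality in $L^p(\gO;H)$,
\begin{equation*}
\E(||L(t)-\widetilde L_N(t)||_H^p)^{1/p}\le \E(||L(t)-L_N(t)||_H^p)^{1/p}+\E(||L_N(t)-\widetilde L_N(t)||_H^p)^{1/p},
\end{equation*}
so it suffices to bound the truncation error and the approximation error separately.

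For the truncation error, the orthonormality of the KL basis $(e_i)_{i\in\N}$ together with the distributional identity $\sqrt{\rho_i}\ell_i(t)\stackrel{\cL}{=}(L(t),e_i)_H$ gives
\begin{equation*}
||L(t)-L_N(t)||_H^p\stackrel{\cL}{=}\Bigl(\sum_{i>N}\rho_i\ell_i(t)^2\Bigr)^{p/2}.
\end{equation*}
To estimate this quantity I would apply H\"older's inequality with conjugate exponents $p/(p-2)$ and $p/2$ to the weighted sum, writing each summand as $\rho_i^{\alpha}\cdot\rho_i^{1-\alpha}\ell_i(t)^2$ and choosing $\alpha$ so that the first factor, raised to $p/(p-2)$, produces the prefactor $\sum_{i>N}\rho_i$, while the second, raised to $p/2$, yields the weighted moment sum $\sum_{i>N}\rho_i^{p/2}|\ell_i(t)|^p$. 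Raising the resulting inequality to the $p/2$-th power, taking expectation (absolute convergence of the resulting series is secured by $\sum\rho_i^{p/2}<+\infty$ and $\E(||L(t)||_H^p)<+\infty$), and extracting the $1/p$-th root produces the first summand $(\sum_{i>N}\rho_i)^{1/2-1/p}(\sum_{i>N}\rho_i^{p/2}\E|\ell_i(t)|^p)^{1/p}$ of the stated bound.

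The approximation error is treated analogously starting from the identity $||L_N(t)-\widetilde L_N(t)||_H^2 = \sum_{i=1}^N\rho_i(\ell_i(t)-\widetilde\ell_i^{(n)}(t))^2$: the same H\"older split applied to this finite sum, followed by extracting the uniform-in-$i$ bound $\E(|\ell_i(t)-\widetilde\ell_i^{(n)}(t)|^p)\le C_{p,\ell}\gD_n$ from the inner sum, produces the second summand $(\sum_{i=1}^N\rho_i)^{1/2-1/p}(C_{p,\ell}\gD_n\sum_{i=1}^N\rho_i^{p/2})^{1/p}$.

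The main obstacle I expect is the careful bookkeeping of the H\"older exponents. For $p=2$ one has $\rho_i^{p/2}=\rho_i$ and $1/2-1/p=0$, so the inequality collapses to equality and the result reduces exactly to the $L^2$-estimate of Theorem~\ref{thm:H_error}. For $p>2$ the interplay between the $\ell^2$-structure of the KL expansion in $H$ and the $L^p$-norm on $\gO$ must be balanced, and the strictly stronger summability assumption $\sum\rho_i^{p/2}<+\infty$ (which, for small eigenvalues $\rho_i\le 1$, is genuinely stronger than $\mathrm{tr}(Q)<+\infty$) is precisely what is needed to guarantee finiteness of the weighted moment sums that appear. The remaining ingredient, $\E(||L(t)||_H^p)<+\infty$, serves to justify the exchange of expectation and the infinite series in the truncation estimate.
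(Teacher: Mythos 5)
Your decomposition is exactly the paper's (triangle inequality in $L^p(\gO;H)$, then an estimate of each weighted quadratic sum), but the step you postpone as ``careful bookkeeping of the H\"older exponents'' is precisely where the argument does not close. Applying H\"older with conjugate exponents $p/(p-2)$ and $p/2$ to $\sum_i\rho_i^{\alpha}\cdot\rho_i^{1-\alpha}\ell_i(t)^2$, the requirement that the first factor be $\big(\sum_i\rho_i\big)^{(p-2)/p}$ forces $\alpha=(p-2)/p$, and then the second factor carries the weight $\rho_i^{(1-\alpha)p/2}=\rho_i$, not $\rho_i^{p/2}$; forcing instead the weight $\rho_i^{p/2}$ requires $\alpha=0$, and the first factor becomes $\big(\sum_{i>N}1\big)^{(p-2)/p}$, which is infinite on the tail. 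Hence no choice of $\alpha$ yields both the prefactor $\big(\sum_{i>N}\rho_i\big)^{1/2-1/p}$ and the inner sum $\sum_{i>N}\rho_i^{p/2}\E(|\ell_i(t)|^p)$ simultaneously, and the same obstruction hits your treatment of the approximation term over $i\le N$.

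What the paper actually does at this point is apply Jensen's inequality with the normalized weights $\rho_i/\sum_j\rho_j$ and the convex map $x\mapsto x^{p/2}$, which is the $\alpha=(p-2)/p$ case of your split and gives
\begin{equation*}
\E\Big(\big(\textstyle\sum_{i>N}\rho_i\ell_i(t)^2\big)^{p/2}\Big)\le\big(\textstyle\sum_{i>N}\rho_i\big)^{p/2-1}\textstyle\sum_{i>N}\rho_i\,\E(|\ell_i(t)|^p),
\end{equation*}
i.e.\ weight $\rho_i$ (not $\rho_i^{p/2}$) in the inner sum, and analogously $C_{p,\ell}\gD_n\big(\sum_{i=1}^N\rho_i\big)^{p/2-1}\sum_{i=1}^N\rho_i$ for the second term. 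Note that the bound with $\rho_i^{p/2}$ cannot hold as a general consequence of this step: already for two equal eigenvalues $\rho<1$ and identical $\ell_1=\ell_2$ the claimed right-hand side $\big(2\rho\big)^{p/2-1}\cdot2\rho^{p/2}\E(|\ell_1(t)|^p)$ is strictly smaller than the left-hand side $2^{p/2}\rho^{p/2}\E(|\ell_1(t)|^p)$ when $p>2$. So the exponent $p/2$ on $\rho_i$ in the corollary (which the paper's own proof also records) looks like a slip rather than something your H\"older split can recover; your proposal inherits it instead of fixing it. Replacing your H\"older step by the Jensen step above gives a correct $L^p$ bound of the same architecture, with $\sum\rho_i\E(|\ell_i(t)|^p)$ and $\sum\rho_i$ in the inner sums, but as written your plan cannot deliver the stated inequality.
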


\begin{proof}
The proof follows closely the one of Theorem~\ref{thm:H_error}. We split the error into
\begin{equation*}
 \E(||L(t)-\widetilde L_N(t)||^p_H)^{1/p}\le \E(||L(t)-L_N(t)||^p_H)^{1/p}+\E(||L_N(t)-\widetilde L_N(t)||^p_H)^{1/p}.
\end{equation*}
For the first term follows 
\begin{align*}
 \E(||L(t)-L_N(t)||_H^p)=\E((||L(t)-L_N(t)||_H^2)^{p/2})=\E\Big( \big(\sum_{i>N} (L(t),e_i)_H^2\big)^{p/2}\Big).
\end{align*}
Since $\E(||L(t)||^p)<\infty$, $L_N$ converges to $L(t)$ in $L^p(\gO;H)$ by the Monotone Convergence Theorem. 
Moreover, using $(L(t),e_i)\stackrel{\cL}{=}\sqrt{\rho_i}\ell_i(t)$ and Jensen's inequality we may bound the above error via
\begin{align*}
 \E(||L(t)-L_N(t)||_H^p)&=\E\Big( \big(\sum_{i>N} \rho_i \ell_i(t)^2 \big)^{p/2}\Big)\\
 &\leq\big(\sum_{i>N} \rho_i\big)^{p/2-1}\sum_{i>N} \rho_i^{p/2} \E(|\ell_i(t)|^p),
\end{align*}
where we have used that $\E(|(L(t),e_i)_H|^p)=\rho_i^{p/2} \E(|\ell_i(t)|^p)$ and $\E(||L(t)||^p)<+\infty$.
Compared to the case $p=2$ with $\E(\rho_i^{p/2} |\ell_i(t)|^p)=\rho_i$, one needs additional assumptions on the $p$-th moment of $\ell_i$ to obtain an explicit bound.
In a similar fashion, the second error contribution is then bounded by  
\begin{align*}
 \E(||L_N(t)-\widetilde L_N(t)||_H^p)&=\E\Big(\big(\sum_{i=1}^N \rho_i|\ell_i(t)-\widetilde\ell_i^{(n)}(t)|^2\big)^{p/2}\Big)\\
 &\le \big(\sum_{i=1}^N \rho_i\big)^{p/2-1}\sum_{i=1}^N \rho_i^{p/2} \E(|\ell_i(t)-\widetilde\ell_i^{(n)}(t)|^p)\\
 &\le C_{\ell,p}\gD_n\big(\sum_{i=1}^N \rho_i\big)^{p/2-1}\sum_{i=1}^N \rho_i^{p/2}.
\end{align*}
\end{proof}

By a Borel--Cantelli-type argument almost sure convergence follows from Theorem~\ref{thm:H_error}.
\begin{cor}
	Let the assumptions of Theorem~\ref{thm:H_error} hold and the eigenvalues of $Q$ fulfill
	$\sum_{i\in\N} \rho_i (i-1)<+\infty$. 
	If for each $N\in\N$, $n(N)\in\N$ is chosen such that 
	\begin{equation*}
	\gD_{n(N)} \le \frac{T\sum_{i>N}\rho_i}{C_\ell \sum_{i=1}^N\rho_i}, \quad N\in\N,
	\end{equation*}
	(see Remark~\ref{rem:trunc}) the approximated L\'evy process $\widetilde L_N$ converges almost surely to $L$ in $H$ as $N\to\infty$, where the convergence is uniform in $\T$.
\end{cor}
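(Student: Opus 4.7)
The strategy is a Borel--Cantelli argument that converts the mean-square bound of Theorem~\ref{thm:H_error} into almost sure uniform convergence. Substituting the stated choice of $\gD_{n(N)}$ into the estimate of Theorem~\ref{thm:H_error} balances the two error contributions and yields
\begin{equation*}
\sup_{t\in\T}\E(||L(t)-\widetilde L_N(t)||_H^2) \le 4T\sum_{i>N}\rho_i.
\end{equation*}
The next step is to upgrade this pointwise-in-$t$ bound to one with the supremum inside the expectation. Since $L$ is square-integrable, the centred process $L(\cdot)-\cdot\,m$ is an $H$-valued c\`adl\`ag martingale, as is the truncation $L_N(\cdot)-\cdot\,m_N$, so Doob's maximal inequality directly gives $\E(\sup_{t\in\T}||L(t)-L_N(t)||_H^2)\le 4T\sum_{i>N}\rho_i$. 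For the discretisation error $L_N-\widetilde L_N$, the piecewise-constant structure of $\widetilde L_N$ on the grid $\Xi_{n(N)}$ reduces the supremum on each sub-interval $[t_j,t_{j+1})$ to oscillations of the martingale $L_N-L_N(t_j)$ over an interval of length $\gD_{n(N)}$, again controlled by Doob combined with~\eqref{eq:C_ell}; together with the equilibration hypothesis this yields a bound
\begin{equation*}
\E\big(\sup_{t\in\T}||L(t)-\widetilde L_N(t)||_H^2\big) \le C\sum_{i>N}\rho_i
\end{equation*}
with $C$ independent of $N$.

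The remainder is classical. Fix $\eps>0$: Markov's inequality yields
\begin{equation*}
\bP\big(\sup_{t\in\T}||L(t)-\widetilde L_N(t)||_H>\eps\big) \le \frac{C}{\eps^2}\sum_{i>N}\rho_i,
\end{equation*}
and Fubini together with the assumption rewrites the sum over $N$ as
\begin{equation*}
\sum_{N\in\N}\sum_{i>N}\rho_i = \sum_{i\in\N}(i-1)\rho_i<+\infty.
\end{equation*}
The Borel--Cantelli lemma then implies that $\{\sup_{t\in\T}||L(t)-\widetilde L_N(t)||_H>\eps\}$ occurs only finitely often almost surely, and intersecting the null sets obtained for $\eps=1/k$, $k\in\N$, gives $\sup_{t\in\T}||L(t)-\widetilde L_N(t)||_H\to 0$ almost surely.

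The main obstacle I anticipate is the passage from $\sup_{t}\E$ to $\E\sup_{t}$ in the second step: one must preserve the exact rate $\sum_{i>N}\rho_i$, since any polynomial loss in $N$, or worse a loss in the number $2^{n(N)}$ of grid points of $\Xi_{n(N)}$ produced by a naive union bound, would destroy the critical summability encoded in $\sum_i(i-1)\rho_i<+\infty$. The martingale structure of the square-integrable L\'evy processes $L$ and $L_N$, together with the c\`adl\`ag piecewise-constant nature of $\widetilde L_N$, is exactly what lets Doob-type maximal inequalities deliver the same rate as the pointwise $L^2$ estimate.
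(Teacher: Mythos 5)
Your closing step (Markov's inequality, the rearrangement $\sum_{N\in\N}\sum_{i>N}\rho_i=\sum_{i\in\N}(i-1)\rho_i<+\infty$, Borel--Cantelli) is precisely the paper's argument. But the paper never moves the supremum inside the expectation: it applies Markov to the pointwise bound of Theorem~\ref{thm:H_error} for each fixed $t$, observes that with the prescribed $\gD_{n(N)}$ the resulting bound $4T\sum_{i>N}\rho_i/\eps^2$ is independent of $t$, and concludes by Borel--Cantelli; the phrase ``uniform in $\T$'' is justified there only through the $t$-independence of this bound. Your proposal instead rests on the intermediate claim $\E\big(\sup_{t\in\T}\|L(t)-\widetilde L_N(t)\|_H^2\big)\le C\sum_{i>N}\rho_i$, and this is where the gap lies.

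For the truncation term $L-L_N$ your Doob argument is sound (after centering, it is an $H$-valued c\`adl\`ag martingale). For the discretisation term $L_N-\widetilde L_N$, however, the sketched bound cannot be obtained and is in general false. Doob's inequality controls the oscillation of $L_N$ over a \emph{single} subinterval of length $\gD_{n(N)}$ at the rate $\gD_{n(N)}\,\mathrm{tr}(Q)$, but you need the maximum over all $2^{n(N)}$ subintervals; a union bound returns $2^{n(N)}\gD_{n(N)}=T$, with no decay, and no maximal inequality can do better, because pathwise $\max_j\sup_{t\in[t_j,t_{j+1})}\|L_N(t)-L_N(t_j)\|_H$ is bounded below by (half) the largest jump of $L_N$ on $\T$ --- a quantity that does not shrink as $\gD_{n(N)}\to0$ or $N\to\infty$ whenever $L$ has a nontrivial jump component, which is exactly the situation this paper targets. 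Concretely, if $L_N$ jumps at a time $\tau$ strictly inside a grid cell, $\widetilde L_N$ is constant on that cell, so the sup-norm error on that cell is at least half the jump height; hence $\sup_{t\in\T}\|L(t)-\widetilde L_N(t)\|_H$ does not even tend to $0$ almost surely for jump processes, and assumption~\eqref{eq:C_ell}, which is a fixed-$t$ bound, cannot repair this. So the ``main obstacle'' you flag is not surmountable by Doob on subintervals; the corollary must be proved (and its uniformity statement read) as in the paper, via the pointwise-in-$t$ estimate whose constant is uniform in $t$.
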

\begin{proof}
	By Markov's inequality and Theorem~\ref{thm:H_error}, we obtain for any $\eps>0$ and $t\in\T$
	\begin{equation*}
	\bP(||L(t)-\widetilde L_N(t)||_H>\eps)\le\frac{\E(||L(t)-\widetilde L_N(t)||_H^2)}{\eps^2}
	\le \frac{1}{\eps^2} \Big(\big(T\sum_{i>N}\rho_i\big)^{1/2}+\big(C_\ell\gD_{n(N)}\sum_{i=1}^N\rho_i\big)^{1/2}\Big)^2.
	\end{equation*}	
	With $\gD_{n(N)}$ as above this yields
	\begin{equation*}
	\sum_{N\in\N}\bP(||L(t)-\widetilde L_N(t)||_H>\eps)\le\frac{4T}{\eps^2}\sum_{N\in\N}\sum_{i>N}\rho_i
	=\frac{4T}{\eps^2}\sum_{i>N}\rho_i(i-1)<\infty.
	\end{equation*}
	The claim follows by the Borel-Cantelli Lemma and by the fact that the sum on the right hand side in the inequality is independent of $t$. 
\end{proof}

For the approximation
\begin{equation*}
\widetilde L_N(t)=\sum_{i=1}^N\sqrt{\rho_i} e_i \widetilde\ell_i(t)
\end{equation*}
of $L$, we required that the one-dimensional L\'evy processes $(\widetilde\ell_i,i=1,\dots,N)$ are uncorrelated but not independent.
Several questions may arise regarding this truncated sum: 
\begin{enumerate}
 \item[1.] How can we efficiently simulate suitable one-dimensional approximations $\widetilde\ell_i$ of $\ell_i$ and determine the constant $C_\ell$ to apply Theorem~\ref{thm:H_error}?
 \item[2.] Is $L_N$ again a L\'evy field for arbitrary one-dimensional processes $(\ell_i,i\in\N)$ and can the point-wise marginal distribution of $L_N(t)$ for a given spectral basis $((\sqrt\rho_ie_i),i\in\N)$ and fixed $N\in\N$ be determined?  
 \item[3.] Is it possible to construct $L_N$ in a way such that its point-wise marginal processes follow a desired distribution?
\end{enumerate}
In the next chapter we address the first question and present a novel approach for the approximation of arbitrary one-dimensional L\'evy processes $\ell_i$. 
We derive explicit error bounds and convergence results in $L^p(\gO;\R)$, hence we are able to determine $C_\ell$ or at least bound this constant from above. 
The last two questions on the distribution properties of $L_N$ are then investigated in Section~\ref{sec:GH} for an important subclass of L\'evy fields.
We discuss distributional features of $L_N$ so as to use the approximation methodology developed in Section~\ref{sec:GIG_sim} efficiently to draw samples of the field $\widetilde L_N$.

\section{Simulation of L\'evy processes by Fourier inversion}\label{sec:GIG_sim}
The simulation of an arbitrary one-dimensional L\'evy process $\ell=(\ell(t),t\in\T)$ is not straightforward, as sufficiently many discrete realizations of $\ell$ in $\T$ are needed and the distribution of the increment $\ell(t+\gD_n)-\ell(t)$ for some small time step $\gD_n>0$ is not explicitly known in general.
A well-known and common way to simulate a L\'evy process with characteristic triplet $(\iota,\sigma^2,\nu)$ (see Equation \eqref{eq:LKD})
is the \textit{compound Poisson approximation} (CPA) suggested in~\cite{R97} and~\cite{S03}. 
All jumps of the process larger than some $\eps>0$ are approximated by a sum of independent compound Poisson processes and the small jumps by their expected values resp. by a Brownian motion. For details and convergence theorems of this method we refer to \cite{AR01,R97,S03}.
Although the CPA is applicable in a very general setting, in the sense that only the triplet $(\iota,\sigma^2,\nu)$ has to be known for simulation, it has several drawbacks. 
It is possible to show that the CPA converges under certain assumptions in distribution to a L\'evy process with characteristic triplet $(\iota,\sigma^2,\nu)$, 
and even strong error rates for CPA-type approximation schemes are given, for instance in \cite{DHP12, F11, MS07}. 
The derived $L^p$-error rates are, however, rather low with respect to the time discretization, only available for $p\le2$ and/or require strong assumptions on the moments of the L\'evy measure $\nu$.
Furthermore, if the cumulated density function (CDF) of $\nu$ is unknown, numerical integration with respect to $\nu$ is necessary. 
Evaluating the density of $\nu$ at sufficiently many points to obtain a good approximation might be time consuming, especially if this involves computationally expensive components (e.g. Bessel functions).
It is, further, a-priori not clear how to discretize the measure $\nu$ (we refer to a discussion on this matter in~\cite[Chapter 8]{S03}).
One could choose for example equidistant or equally weighted points, but this choice might have a significant impact on the precision and the speed of the simulation, and is impossible to be assessed beforehand. 
The disadvantages of the CPA method motivate the development of an alternative methodology.\\ 
In the following, we introduce a new sampling approach which approximates the process $\ell$ by a refining sequence of piecewise constant c\`adl\`ag processes $(\overline{\ell}^{(n)},n\in\N)$. 
We show its asymptotic convergence in $L^p(\gO;\R)$-sense and almost surely. This approximation suffers from the fact that the necessary conditional densities from which we have to sample are not available for many L\'evy processes.
For a given refinement parameter $n$, we develop, therefore, an algorithm to sample an approximation $\widetilde{\ell}^{(n)}$ of $\overline{\ell}^{(n)}$ for which the resulting error may be bounded again in $L^p(\gO;\R)$-sense.
This technique is based on the assumption that the characteristic function of $\ell$ is available in closed form, which is true for a broad class of L\'evy processes.
We exploit this knowledge by so-called \emph{Fourier inversion} to draw samples of the process' increments over an arbitrary large time step $\gD_n>0$. 
In Section~\ref{sec:num}, we then apply the described method to simulate GH L\'evy fields.

\subsection{A piecewise constant approximation of $\ell$}
Throughout this chapter, we consider a one-dimensional L\'evy process $\ell=(\ell(t),t\in\T)$ with characteristic function $\phi_\ell:\R\to\mathbb C$.
For any $t\in\T$, we denote by $F_t$ the CDF of $\ell(t)$ and by $f_t$ the corresponding density function, provided that $f_t$ exists. 
Note that in this case $F_t$ and $f_t$ belong to the probability distribution with characteristic function $(\phi_\ell)^t$.
To obtain a refining scheme of approximations of $\ell$, we introduce a sampling algorithm for $\ell$ based on the construction of \textit{L\'evy bridges}.
In our context, a L\'evy bridge is the stochastic process $(\ell(t)|t\in(t_1,t_2))$ pinned to given realizations of the boundary values $\ell(t_1)$ and $\ell(t_2)$ for $0\le t_1<t_2\le T$.  
It has been shown, for instance in \cite[Proposition 2.3]{HHM11}, that these bridges are Markov processes.
Assuming that the density $f_t$ exists for every $t\in\T$ (see also Remark~\ref{rem:2pi}), the distribution of the increment $\ell(t)-\ell(t_1)$ conditional on $\ell(t_2)$ is well-defined whenever $f_{t_2-t_1}(\ell(t_2))\in(0,+\infty)$.
Its density function is then given as  
\begin{equation}\label{eq:cond_dens}
f^{t_1,t_2}_{t}(x):=\frac{f_{t-t_1}(x)f_{t_2-t}(\ell(t_2)-x)}{f_{t_2-t_1}(\ell(t_2))},
\end{equation}
with conditional expectation $\E(\ell(t)|\ell(t_1),\ell(t_2))=\frac{\ell(t_2)-\ell(t_1)}{t_2-t_1}(t-t_1)$ (see \cite{HHM11},\cite{MY05}).
This motivates the following sampling algorithm for a piecewise constant approximation of $\ell$:
\begin{algo}\label{algo:bridge}
 Let $n\in\N$ and generate a sample of the random variable $\mathcal X_{0,1}$ with density $f_T$. Set $\mathcal X_{0,0}:=0$, $i:=1$ and $\gD_0:=T$.
  \begin{algorithmic}[1]
 \While{$i\le n$}
 \State Define $\gD_i=\frac{T}{2^i}$.
 \For{$j=0,2,\dots,2^i$}
 \State Set $\mathcal X_{i,j}=\mathcal X_{i-1,j/2}$.
 \EndFor
 \For{$j=1,3,\dots,2^i-1$}
 \State Generate the (conditional) increment $\mathcal X_{i,j}-\mathcal X_{i,j-1}$ within $[\frac{(j-1)T}{2^{(i-1)}},\frac{jT}{2^{(i-1)}}]$.
 \State That is, sample the random variable $X:\gO\to\R$ with density 
\State \begin{equation*}
 x\mapsto\frac{f_{\gD_i}(x)f_{\gD_i}(\mathcal X_{i,j+1}-x)}{f_{\gD_{i-1}}(\mathcal X_{i,j+1})}
\end{equation*}
\State and set $\mathcal X_{i,j}:=X+\mathcal X_{i,j-1}$
 \EndFor
 \State $i=i+1$ 
 \EndWhile
\end{algorithmic}
Define the piecewise constant process $\overline\ell^{(n)}(t):=\mathcal X_{n,2^n}\1_{\{T\}}(t)+\sum\limits_{j=1}^{2^n}\mathcal X_{n,j-1}\1_{\{[(j-1)T/2^n,jT/2^n)\}}(t)$.
\end{algo}
Eventually, the sequence $(\overline\ell^{(n)},n\in\N)$ of c\`adl\`ag processes admits a pointwise limit in $L^p(\gO;\R)$ which corresponds to the process $\ell$:
\begin{thm}\label{thm:bridge}
 Let $\phi_\ell$ be a characteristic function of an infinitely divisible probability distribution.
 For any $t\in\T$, assume the probability density $f_t$ corresponding to $(\phi_\ell)^t$ exists.
 Further, for $n\in\N$, let $\overline\ell^{(n)}$ be the process generated by Algorithm~\ref{algo:bridge} and $f_t$ on $(\gO,(\cA_t,t\geq0),\bP)$.
 If $\int_\R|x|^pf_1(x)dx<\infty$ for some $p\in[1,\infty)$, then 
 \begin{equation*}
  \lim_{n\to\infty}\E(|\overline\ell^{(n)}(t)-\ell(t)|^p)=0,
 \end{equation*}
  where $\ell$ is a L\'evy process with characteristic function $\phi_\ell$ on $(\gO,(\cA_t,t\geq0),\bP)$.
\end{thm}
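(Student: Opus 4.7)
The plan is to exhibit a coupling of $\overline\ell^{(n)}$ and $\ell$ on the same probability space under which the two processes coincide at every dyadic refinement point, and then reduce the statement to the $L^p$-continuity of a L\'evy process at each $t\in\T$. Concretely, I would proceed in three steps: (i) identify the joint law produced by Algorithm~\ref{algo:bridge} with the L\'evy skeleton of $\ell$; (ii) pathwise couple $\overline\ell^{(n)}$ with $\ell$ so that $\overline\ell^{(n)}(t)$ is a.s.\ the value of $\ell$ at the largest dyadic point $\leq t$; (iii) control $\E|\ell(s)-\ell(0)|^p$ as $s\downarrow 0$.

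For step (i), I would prove by induction on $n$ that $(\mathcal X_{n,j})_{j=0}^{2^n}$ has the same joint distribution as $(\ell(jT/2^n))_{j=0}^{2^n}$. The base case is immediate since $\mathcal X_{0,0}=0=\ell(0)$ and $\mathcal X_{0,1}$ is drawn from $f_T$, the density of $\ell(T)$. For the inductive step, the conditional density used in Algorithm~\ref{algo:bridge} is precisely \eqref{eq:cond_dens}, which by the Markov property of L\'evy bridges recalled just before the algorithm (see~\cite{HHM11}) is the conditional density of $\ell$ at the midpoint given the two neighbouring values; combined with the induction hypothesis this identifies the joint law at level $n$.

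For step (ii), a standard transfer argument---or, equivalently, a direct reconstruction that samples the required conditional laws from $\ell$ itself---allows me to realise $\overline\ell^{(n)}$ on $(\gO,(\cA_t,t\geq 0),\bP)$ so that $\mathcal X_{n,j}=\ell(jT/2^n)$ almost surely for every $0\le j\le 2^n$, and consistently across refinement levels since even-indexed values are copied over unchanged by the algorithm. Writing $t_n$ for the largest element of $\Xi_n$ that does not exceed $t$ (and $t_n=T$ when $t=T$), the definition of $\overline\ell^{(n)}$ gives $\overline\ell^{(n)}(t)=\ell(t_n)$ almost surely, so by stationarity of the increments of $\ell$,
\begin{equation*}
\E|\overline\ell^{(n)}(t)-\ell(t)|^p \;=\; \E|\ell(t)-\ell(t_n)|^p \;=\; \E|\ell(t-t_n)|^p .
\end{equation*}

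Step (iii), which I expect to be the main obstacle, is to show that $\E|\ell(s)|^p\to 0$ as $s\downarrow 0$; since $t-t_n\le T2^{-n}\to 0$ the theorem follows. The assumption $\int_\R|x|^p f_1(x)\,dx<\infty$ gives $\E|\ell(1)|^p<\infty$, and, via stationarity and independence of increments together with a maximal inequality of Doob/Burkholder--Davis--Gundy type applied to the compensated L\'evy martingale (the drift being linear in $s$ and hence harmless), lifts to $\E\sup_{u\in[0,T]}|\ell(u)|^p<\infty$. Stochastic continuity then yields $\ell(s)\to 0$ in probability as $s\downarrow 0$, and dominated convergence with dominant $\sup_{u\in[0,T]}|\ell(u)|^p$ delivers the desired vanishing of the $L^p$-norm.
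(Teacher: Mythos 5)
Your proposal is essentially correct, but it proves the theorem by a genuinely different route than the paper. The paper never couples the algorithm's output to a pre-existing $\ell$; instead it shows directly that $(\overline\ell^{(n)}(t),n\in\N)$ is an $L^p(\gO;\R)$-Cauchy sequence, using the refinement structure of Algorithm~\ref{algo:bridge}: at a fixed $t$, the difference $\overline\ell^{(n+1)}(t)-\overline\ell^{(n)}(t)$ is either zero or a single increment whose law has characteristic function $(\phi_\ell)^{T/2^{n+1}}$, so its $p$-th moment is small and the resulting $2^{-n/p}$-type bounds are summable. The limit is then \emph{defined} as $\ell(t)$, and its law is identified via $(\phi_\ell)^{\lfloor t2^n/T\rfloor T/2^n}\to(\phi_\ell)^t$ together with infinite divisibility; this is precisely what gives meaning to ``$\ell$ is a L\'evy process with characteristic function $\phi_\ell$ on the same space'', and it also furnishes the explicit rate that is reused later (Theorem~\ref{thm:L^p}). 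Your argument instead starts from $\ell$, observes (your step (i), which is the same bridge-law identification the paper uses implicitly) that the algorithm's skeleton has the law of the dyadic skeleton of $\ell$, realises the coupling $\mathcal X_{n,j}=\ell(jT/2^n)$, and reduces everything to $\E|\ell(s)|^p\to0$ as $s\downarrow0$. What your route buys is conceptual transparency (the error is exactly one small-time increment of $\ell$, giving the bound $\E|\ell(t-t_n)|^p$ with $t-t_n\le T2^{-n}$); what the paper's route buys is that it does not presuppose $\ell$ on the given space and avoids any maximal-inequality machinery.

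Two details in your sketch deserve care. First, the step ``lift to $\E\sup_{u\in[0,T]}|\ell(u)|^p<\infty$ via Doob'' fails as stated for $p=1$, since Doob's maximal inequality has no $L^1$ version; for $1\le p<2$ you need either a Burkholder--Davis--Gundy bound combined with a big-jump/small-jump splitting of the quadratic variation, or the standard decomposition of $\ell$ into a bounded-jump part (all moments finite) plus a compound Poisson part whose supremum is dominated by the sum of absolute jumps. This is standard and true, but it is where the work sits in your approach; alternatively you can avoid the maximal function altogether and show $\E|\ell(s)|^p\to0$ directly. Second, your step (ii) should be phrased as the ``direct reconstruction'' you mention (define $\mathcal X_{n,j}:=\ell(jT/2^n)$, whose conditional midpoint laws are exactly the bridge densities \eqref{eq:cond_dens}), since a generic ``transfer argument'' starting from the algorithm's own randomness would require constructing $\ell$ from the dyadic skeleton --- which is exactly the limit construction the paper carries out and would make your proof circular at that point. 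With these points made explicit, your argument is a valid alternative proof.
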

\begin{proof}
For any $n\in\N$ and $t\in\T$ we have that
\begin{align*}
 &\E[|\overline\ell^{(n+1)}(t)-\overline\ell^{(n)}(t)|^p]\\
 =&\E\Big(\big|\sum\limits_{j=1}^{2^{n+1}}\mathcal X_{n+1,j-1}\1_{\{[(j-1)T/2^{n+1},jT/2^{n+1})\}}(t)-\sum\limits_{j=1}^{2^n}\mathcal X_{n,j-1}\1_{\{[(j-1)T/2^n,jT/2^n)\}}(t)\big|^p\Big)\\
 =&\E\Big(\big|\sum\limits_{j=1}^{2^n}(\mathcal X_{n+1,2j-1}-\mathcal X_{n+1,2j-2})\1_{\{[(2j-1)T/2^{n+1},2jT/2^{n+1})\}}(t)\big|^p\Big)\\
\end{align*}
Since the increments $\mathcal X_{n+1,j+1}-\mathcal X_{n+1,j}$ are i.i.d. with characteristic function $(\phi_{\ell})^{T/2^{n+1}}$ by construction, this yields
$$\E[|\overline\ell^{(n+1)}(t)-\overline\ell^{(n)}(t)|^p]\le C_{\ell,T}2^{-n-1}\int_\R|x|^pf_1(x)dx=C_{\ell,T,p}2^{-n-1},$$
where $C_{\ell,T}$ resp. $C_{\ell,T,p}$ are positive constants that are independent of $n$.
Hence, for any $m,n\in\N$ with $m>n$ it follows
$$\E[|\overline\ell^{(m)}(t)-\overline\ell^{(n)}(t)|^p]^{1/p}\le C_{\ell,T,p}^{1/p}\sum_{i=n+1}^m2^{-i/p}=C_{\ell,T,p}^{1/p}\frac{2^{-n/p}-2^{-m/p}}{2^{1/p}-1},$$
meaning that $(\overline\ell^{(n)}(t),n\in\N)$ is a $L^p(\gO;\R)$-Cauchy sequence and, therefore, admits a limit.
The characteristic function of $\overline\ell^{(n)}(t)$ is given by $(\phi_\ell)^{\lfloor t2^n/T\rfloor T/2^n}\stackrel{n\to\infty}{\to}(\phi_\ell)^t$.
The claim follows since the distribution with characteristic function $\phi_\ell$ is infinitely divisible, hence the limit process $\ell=(\ell(t),t\in\T)$ is in fact a L\'evy process.
\end{proof} 
\begin{cor}
 Under the assumptions of Theorem~\ref{thm:bridge} with $p=1$, $\overline\ell^{(n)}$ converges to $\ell$ $\bP$-almost surely as $n\to+\infty$, uniformly in $\T$.
\end{cor}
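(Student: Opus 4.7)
The plan is to establish a Borel--Cantelli estimate directly on the sup-norm increment $\sup_{t\in\T}|\overline\ell^{(n+1)}(t) - \overline\ell^{(n)}(t)|$, rather than on pointwise differences, so that a.s.\ uniform Cauchyness on $\T$ comes out of the argument and the limit can be identified with $\ell$ through the $L^1$-convergence of Theorem~\ref{thm:bridge}. The first step is to exploit the piecewise-constant, left-endpoint structure of $\overline\ell^{(n)}$: the level-$(n+1)$ refinement differs from the level-$n$ skeleton only on the right halves of the level-$n$ dyadic intervals, where it equals the ``fresh'' bridge increment, so
\begin{equation*}
\sup_{t\in\T}\bigl|\overline\ell^{(n+1)}(t) - \overline\ell^{(n)}(t)\bigr| \;=\; \max_{1 \le j \le 2^n}\bigl|\mathcal X_{n+1,2j-1} - \mathcal X_{n+1,2j-2}\bigr|.
\end{equation*}
By the L\'evy (stationary and independent increments) property of the joint law generated by Algorithm~\ref{algo:bridge}, each of these $2^n$ random variables is marginally distributed as $\ell(T/2^{n+1})$.

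Next I would combine a union bound with Markov's inequality at exponent $p$ and the moment estimate extracted in the proof of Theorem~\ref{thm:bridge}, giving, for any threshold $\eps_n > 0$,
\begin{equation*}
\bP\Bigl(\sup_{t\in\T}\bigl|\overline\ell^{(n+1)}(t) - \overline\ell^{(n)}(t)\bigr| > \eps_n\Bigr) \;\le\; 2^n\, \frac{\E|\ell(T/2^{n+1})|^p}{\eps_n^p}.
\end{equation*}
Using the sharper scaling $\E|\ell(s)|^p = O(s^{\alpha})$ with $\alpha > 1$, available once a moment of sufficiently high order of $\ell(1)$ is finite (invoking Theorem~\ref{thm:bridge} at a larger exponent than $1$), the right-hand side becomes $C\, 2^{-n(\alpha - 1)}/\eps_n^p$. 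Choosing, for instance, $\eps_n = 2^{-n(\alpha - 1)/(2p)}$ yields a summable tail, and Borel--Cantelli then forces, almost surely, $\sup_{t\in\T}|\overline\ell^{(n+1)}(t) - \overline\ell^{(n)}(t)| \le \eps_n$ for all large $n$. Uniform Cauchyness of $(\overline\ell^{(n)})_n$ on $\T$ follows, and the a.s.\ uniform limit is identified with $\ell$ by the $L^1$-convergence of Theorem~\ref{thm:bridge}.

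The main obstacle is precisely this moment step. Working only with $p=1$ and the estimate $\E|\overline\ell^{(n+1)}(t) - \overline\ell^{(n)}(t)|^p \le C\, 2^{-n-1}$ exactly as recorded in Theorem~\ref{thm:bridge}, the union bound leaves a tail of order $C/\eps_n^p$, constant in $n$, which Borel--Cantelli cannot absorb with any $\eps_n \to 0$. The real leverage comes from working at a higher exponent, or from replacing the union bound by a sharper maximal inequality that exploits the joint distribution of the $2^n$ bridge increments rather than their marginals alone, so that the union-bound factor $2^n$ is beaten by an improved power of $s = T/2^{n+1}$. Once that technical point is secured, the remainder of the argument is a direct transcription of the Borel--Cantelli step in the preceding corollary, with the tail bound now controlling the supremum over $\T$ rather than the value at a fixed $t$.
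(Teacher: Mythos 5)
Your route---controlling $\sup_{t\in\T}|\overline\ell^{(n+1)}(t)-\overline\ell^{(n)}(t)|$ by a union bound over the $2^n$ fresh bridge increments---is not the paper's, and the obstacle you flag at the end is not a technicality that can be ``secured'': it is fatal for the processes considered here. The quantity $\max_{1\le j\le 2^n}|\mathcal X_{n+1,2j-1}-\mathcal X_{n+1,2j-2}|$ is the maximum of $2^n$ i.i.d.\ copies of $\ell(T/2^{n+1})$, and for any L\'evy process with a non-trivial jump part this maximum does not tend to zero: $\bP(|\ell(s)|>\eps)\sim s\,\nu(\{|x|>\eps\})$ as $s\to0$, so $2^n\,\bP(|\ell(T/2^{n+1})|>\eps)\to \tfrac{T}{2}\nu(\{|x|>\eps\})>0$, and the maximum in fact converges to (essentially) the largest jump of $\ell$ over an interval of length $T/2$, a strictly positive random variable. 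For the same reason the moment improvement you hope for, $\E(|\ell(s)|^p)=\mathcal O(s^\alpha)$ with $\alpha>1$, is unavailable for any $p$: the event of a single jump of macroscopic size already contributes a term of exact order $s$ to $\E(|\ell(s)|^p)$. No sharper maximal inequality can rescue the step, because the statement it would need to deliver (a.s.\ decay of the one-level sup-increment) is simply false for jump processes.

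The paper's proof sidesteps all of this by working at a \emph{fixed} $t$ and by bounding the \emph{total} error rather than a single refinement increment: summing the level estimates from the proof of Theorem~\ref{thm:bridge} gives $\E(|\overline\ell^{(n)}(t)-\ell(t)|)\le C_{\ell,T,p}\,2^{-n+1}$, Markov's inequality then yields a tail bound that is geometrically summable in $n$, and Borel--Cantelli concludes. The summability comes from the geometric decay of the tail of the telescoping series at fixed $t$, not from beating a union-bound factor of $2^n$; the uniformity in $\T$ is claimed only through the $t$-independence of the constants, exactly as in the corollary following Theorem~\ref{thm:H_error}, not through a genuine supremum estimate of the kind you attempt. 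To repair your write-up, apply Borel--Cantelli to the events $\{|\overline\ell^{(n)}(t)-\ell(t)|>\eps\}$ at fixed $t$ using the already-summed $L^1$ bound, and drop the sup-norm decomposition.
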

\begin{proof}
 For any $t\in\T$ and $\eps>0$, we get by Markov's inequality
 \begin{equation*}
  \bP(|\overline\ell^{(n)}(t)-\ell(t)|)\le\frac{\E(|\overline\ell^{(n)}(t)-\ell(t)|)}{\eps}\le\frac{C_{\ell,T,p}}{\eps}\sum_{i=n}^\infty2^{-i}=\frac{C_{\ell,T,p}2^{-n+1}}{\eps}.
 \end{equation*}
The claim then follows by the Borel-Cantelli Lemma since 
 \begin{equation*}
  \sum_{n=1}^\infty\bP(|\overline\ell^{(n)}(t)-\ell(t)|)\le\frac{2C_{\ell,T,p}}{\eps}\sum_{n=1}^\infty 2^{-n}<+\infty.
 \end{equation*}
\end{proof}

Although Algorithm~\ref{algo:bridge} has convenient properties in terms of convergence, it may only be applied for a small class of L\'evy processes.
For a general L\'evy process $\ell$, the conditional densities in Eq.~\eqref{eq:cond_dens} will be unknown and thus simulating from this distributions is impossible. 
A few exceptions where ``bridge sampling'' of L\'evy processes is feasible include the inverse Gaussian (\cite{Rw03}) and the tempered stable process (\cite{KK16}).
However, if we consider a fixed parameter $n$, sampling from the bridge distributions is equivalent to the following algorithm:

\begin{algo}\label{alog:mod}
	~
  \begin{algorithmic}[1] 
\State For $n\in\N$, fix $\gD_n,\Xi_n$ as in Section~\ref{sec:pre} and generate $2^n$ i.i.d random variables $ X_1,\dots,X_{2^n}$ with density $f_{\gD_n}$.
\State Set $\ell^{(n)}(t)=0$ if $t\in[0,t_1)$, $\ell^{(n)}(t)=\sum_{k=1}^jX_k$ if $t\in[t_j,t_{j+1})$ for $j=1,\dots,2^n-1$ and $\ell^{(n)}(T)=\sum_{j=1}^{2^n} X_j$.
  \end{algorithmic}
\end{algo}
The equivalence is in the sense that both processes are piecewise constant, c\`adl\`ag and all intermediate points follow the same conditional L\'evy bridge distributions. 
Note that $\ell^{(n)}$ coincides with $\overline\ell^{(n)}$ from Algorithm~\ref{algo:bridge} where the initial value has been chosen as $\mathcal X_{0,1}=\ell^{(n)}(T)=\sum_{j=1}^{2^n}X_j$.
The advantage of Algorithm~\ref{algo:bridge} is that $2^n$ independent samples from the same distribution have to be generated, instead of $2^n$ random variables from (different) conditional distributions.
As we will see in the following section, sampling from the distribution with density $f_{\gD_n}$ may be achieved if the characteristic function $\phi_\ell$ is available. 
In addition, we are still able to use the $L^p(\gO;\R)$ error bounds from Theorem~\ref{thm:bridge} for a fixed $n\in\N$.

\subsection{Inversion of the Characteristic Function}\label{ssec:Fourierinversion}
For an one-dimensional L\'evy process $\ell$ with characteristic function $\phi_\ell$, the characteristic function of any increment $\ell(t+\gD_n)-\ell(t)$ can be expressed via
\begin{equation*} 
\E[\exp(iu(\ell(t+\gD_n)-\ell(t)))]=\E[\exp(iu(\ell(\gD_n)))]=(\phi_\ell(u))^{\gD_n}
\end{equation*}
for any time step $\gD_n>0$. 
If $F_{\gD_n}$ denotes again the CDF of this increment, we obtain by Fourier inversion (see~\cite{GP51})
\begin{equation}\label{eq:finv}
F_{\gD_n}(x)=\frac{1}{2}-\int_\R \frac{(\phi_\ell(u))^{\gD_n}}{2\pi iu}\exp(-iux)du.
\end{equation} 
Using the well-known inverse transformation method (see also~\cite{AG07}) to sample from the CDF, allows us to reformulate Algorithm~\ref{alog:mod}:
\begin{algo}\label{algo:L_approx}
~
\begin{algorithmic}[1]
\State For $n\in\N$, fix $\gD_n,\Xi_n$ and generate i.i.d. $U_1,\dots,U_{2^n}$, where  $U_j\sim\cU([0,1])$ on $(\gO,\cA,\bP)$.
\State Determine $X_j:=\inf\{x\in\R|F_{\gD_n}(x)=U_j\}$ for $j=1,\dots,2^n$.  
\State Set $\ell^{(n)}(t)=0$ if $t\in[0,t_1)$, $\ell^{(n)}(t)=\sum_{k=1}^j X_k$ if $t\in[t_j,t_{j+1})$ for $j=1,\dots,2^n-1$ and $\ell^{(n)}(T)=\sum_{j=1}^{2^n}X_j$. 
\end{algorithmic}
\end{algo}
 
The evaluation of $F$ is crucial and may, in general, only be done numerically. To approximate the integral in Eq.~\eqref{eq:finv}, we employ the discrete Fourier inversion method introduced in~\cite{H98}. 
With this method the approximation error can be controlled with relatively weak assumptions on the characteristic function. 
Hence, the resulting algorithm is applicable for a broad class of L\'evy processes.
An alternative algorithm to approximate the CDFs of subordinating processes based on the inversion of Laplace transforms is described in~\cite{VT01}.
Although this approach seems promising in terms of computational effort, here we only consider the Fourier inversion technique. 
The latter is also applicable to L\'evy processes without bounded variation and yields uniform error bounds on the approximated CDF. 

\begin{assumption}\label{ass:1}
The distribution with characteristic function $(\phi_\ell)^{\gD_n}$ is continuous with finite variance and CDF $F_{\gD_n}$. Furthermore, 
\begin{itemize}
\item there exists a constant $R>0$ and $\eta>1$ such that $F_{\gD_n}(-x)\le R|x|^{-\eta}$ and $1-F_{\gD_n}(x)\le R|x|^{-\eta}$ for all $x>0$.
\item there exists a constant $B > 0$ and $\theta>0$ such that $|(\phi_\ell(u))^{\gD_n}|\le B|\frac{u}{2\pi}|^{-\theta}$ for all $u\in\R$. 
\end{itemize}
\end{assumption}
In case of infinite variance, we consider bounds on the density function instead:
\begin{assumption}\label{ass:2}
The distribution with characteristic function $(\phi_\ell)^{\gD_n}$ is continuous with density $f_{\gD_n}$. Furthermore,
\begin{itemize}
\item there exists a constant $R>0$ and $\eta>1$ such that $|f_{\gD_n}(x)|\le R|x|^{-\eta}$ for all $x\in\R$.
\item there exists a constant $B > 0$ and $\theta>0$ such that $|(\phi_\ell(u))^{\gD_n}|\le B|\frac{u}{2\pi}|^{-\theta}$ for all $u\in\R$. 
\end{itemize}
\end{assumption}

\begin{rem} \label{rem:2pi} 
In the case that $\theta>1$, we have that 
\begin{equation*}
 \int_\R|(\phi_\ell(u))^{\gD_n}|du\le 2+2B\int_1^\infty\left(\frac{u}{2\pi}\right)^{-\theta}<\infty,
\end{equation*}
which already implies the existence of a continuous density $f_{\gD_n}$ in both scenarios, see for example \cite[Proposition 28.1]{S99}.
Usually, $F_{\gD_n}$ or $f_{\gD_n}$ are unknown, but only the characteristic function~$(\phi_\ell)^{\gD_n}$ is given. 
To obtain $R$ and $\eta$, one can choose $R=(-1)^{k}\frac{d^{2k}}{du^{2k}}((\phi_\ell(u))^{\gD_n})\big|_{u=0}$ and $\eta=2k$ in Ass.~\ref{ass:1}, resp. $R=\frac{1}{2\pi}\int_\R|\frac{d^{k}}{du^{k}}((\phi_\ell(u))^{\gD_n})|du$ and $\eta=k$ in Ass.~\ref{ass:2}, where $k$ is any non-negative integer such that the derivatives exist (see~\cite[Lemma 12 and 13]{H98}). 
For example, in the first set of assumptions, the finite variance ensures that we can use $\eta=2$ and $R$ equal to the second moment of the distribution with characteristic function $(\phi_\ell)^{\gD_n}$.
\end{rem}
As an approximation of $F_{\gD_n}$ as in Eq.~\eqref{eq:finv} we introduce the function $\widetilde F_{\gD_n}$ given by
\begin{equation*}
\widetilde F_{\gD_n}(x):=\sum_{k=-M/2}^{M/2}q_k\exp(-i2\pi kx/J),
\end{equation*} 
for $x\in\R$, where 
\begin{equation*}
q_k:=
\begin{cases}
1/2&\text{for $k=0$}\\
\frac{1-\cos(2\pi\gk k)}{i2\pi k}(\phi_\ell(-2\pi k/J))^{\gD_n}&\text{for $0<|k|<M/2$}\\
0&\text{for $k=M/2$}\\
\end{cases},
\end{equation*} 
$M$ is an even integer and $\gk,J>0$ are parameters which are determined below. Note that $q_k=\overline{q_{-k}}$, where $\overline z$ denotes the complex conjugate for $z\in \mathbb C$.  
The Hermitean symmetry also holds for the function $k\mapsto\exp(-i2\pi kx/J)$. This ensures that, for every $x\in\R$, we have 
\begin{align*}
\widetilde F_{\gD_n}(x)
&=\frac{1}{2}+\sum_{k=1}^{M/2-1}q_k\exp(-i2\pi kx/J)+\overline{q_k\exp(-i2\pi kx/J)}\\
&=\frac{1}{2}+2\,\text{Re}\Big(\sum_{k=1}^{M/2-1}q_k\exp(-i2\pi kx/J)\Big)
\end{align*}
and hence $\widetilde F_{\gD_n}(x)\in\R$ for any real-valued argument $x$. The last identity should be exploited during the simulation to save computational time as here only half the summation is required.
Lastly, we denote by $\zeta(z,s):=\sum_{k=0}^\infty(k+s)^{-z}$ for $s,z\in\mathbb{C}$ with $\text{Re}(s)>0$ and $\text{Re}(z)>1$ the \textit{Hurwitz zeta function} and define as in~\cite{H98}
\begin{align*}
V_1(\gk,\eta)&:=(\gk/2)^{-\eta}+2\zeta(\eta,1-\frac{\gk}{2})+\zeta(\eta,1+\frac{\gk}{2})+\zeta(\eta,1-\frac{3\gk}{2}),\\
V_2(\gk,\eta)&:=\frac{2^{\eta-1}\gk^{1-\eta}}{\eta-1}+\frac{\gk}{2}\Big(2\zeta(\eta,1-\frac{\gk}{2})+\zeta(\eta,1+\frac{\gk}{2})+\zeta(\eta,1-\frac{3\gk}{2})\Big).
\end{align*}
The expressions $V_1(\gk,\eta)$ and $V_2(\gk,\eta)$ establish conditions on the choice of the (not yet determined) parameter $\gk$ in Theorem~\ref{thm:CDF_approx}. 
For a given domain parameter $D>0$ and accuracy $\eps>0$ the approximation $\widetilde F_{\gD_n}$ should fulfill the error bound
\begin{equation*} 
|\widetilde F_{\gD_n}(x)-F_{\gD_n}(x)|<\eps\quad\text{for $x\in[-D/2,D/2]$}.
\end{equation*}  
Once $\gk$ is determined, this can be achieved by choosing a sufficiently large parameter $J$ and, based on this $J$, a sufficiently large number of summands $M$. Admissible values for $\gk$, $J$ and $M$ depend on $D$, $\eps$ and the constants in Assumption~\ref{ass:1} resp.~\ref{ass:2}.  
\begin{thm}(\cite[Theorem 10 and 11]{H98}) \label{thm:CDF_approx}
Let $D>0$ and $\eps>0$.
If Assumption~\ref{ass:1} holds, choose $\gk$, $J$ and $M$ such that 
\begin{equation*}
0<\gk<\frac{2}{3}\quad\text{and}\quad\gk^\eta V_1(\gk,\eta)\le2^{\eta+1},
\end{equation*}
\begin{equation*}
J\ge\frac{D}{\gk}\quad\text{and}\quad J\ge\Big(\frac{3RV_1(\gk,\eta)}{2\eps}\Big)^{1/\eta},
\end{equation*}
and
\begin{equation*}
M\ge2+2J\Big(\frac{6B}{\eps\pi\theta}\Big)^{1/\theta}.
\end{equation*}
If Assumption~\ref{ass:2} holds, choose $\gk$, $J$ and $M$ such that 
\begin{equation*}
0<\gk<\frac{2}{3}\quad\text{and}\quad\gk^{\eta-1} V_2(\gk,\eta)\le\frac{2^\eta}{\eta-1},
\end{equation*}
\begin{equation*}
J\ge\frac{D}{\gk}\quad\text{and}\quad J\ge\Big(\frac{3RV_2(\gk,\eta)}{2\eps}\Big)^{1/(\eta-1)},
\end{equation*}
and
\begin{equation*}
M\ge2+2J\Big(\frac{6B}{\eps\pi\theta}\Big)^{1/\theta}.
\end{equation*}
This yields, for either case, that $|F_{\gD_n}(x)-\widetilde F_{\gD_n}(x)|<\eps$ for all $x\in[-D/2,D/2]$ and it is always possible to find a $\gk$ that meets the given conditions. 
\end{thm}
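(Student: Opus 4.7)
The plan is to decompose the total error $|\widetilde F_{\gD_n}(x)-F_{\gD_n}(x)|$ into an \emph{aliasing} (or periodization) error and a \emph{truncation} error, and then to bound each contribution by approximately $\eps/2$ under the prescribed choice of $\gk,J,M$. To set this up, I would first recognize $\widetilde F_{\gD_n}$ as the partial sum of the Fourier series of a $J$-periodic function $\widehat F_{\gD_n}$, whose Fourier coefficients are exactly the $q_k$. The weights $q_k=(1-\cos(2\pi\gk k))/(2\pi ik)\cdot(\phi_\ell(-2\pi k/J))^{\gD_n}$ arise from applying Parseval/Poisson summation to a trapezoidal smoothing of $\1_{(-\infty,x]}$ over a window of half-width $\gk J/2$; this smoothing is the reason the parameter $\gk$ enters and is what makes the aliasing error tractable even when $F_{\gD_n}$ is not smooth.

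The aliasing step: write $\widehat F_{\gD_n}(x)-F_{\gD_n}(x)$ via Poisson summation as a sum over $k\in\Z\setminus\{0\}$ of shifted averaged values of $F_{\gD_n}$ evaluated near $x+kJ$. For $x\in[-D/2,D/2]$ and $J\ge D/\gk$, each argument stays away from zero by a gap proportional to $J(1-\gk/2)$, $J(1+\gk/2)$, or $J(1-3\gk/2)$. Applying the tail bound $F_{\gD_n}(-y),\,1-F_{\gD_n}(y)\le R y^{-\eta}$ from Assumption~\ref{ass:1} (or, after one integration by parts, the density bound $|f_{\gD_n}|\le R|\cdot|^{-\eta}$ from Assumption~\ref{ass:2}) and summing in $k$ produces precisely the Hurwitz-zeta combinations $V_1(\gk,\eta)$ and $V_2(\gk,\eta)$. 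The requirement $J\ge(3RV_j(\gk,\eta)/(2\eps))^{1/\eta}$ (resp.\ $1/(\eta-1)$) then forces this aliasing contribution to be at most $2\eps/3$; the side condition $\gk^\eta V_1(\gk,\eta)\le 2^{\eta+1}$ (resp.\ the analogue for $V_2$) is what guarantees that such a $\gk\in(0,2/3)$ exists at all, which follows because $V_j(\gk,\eta)$ is dominated by its $(\gk/2)^{-\eta}$-type leading term as $\gk\downarrow 0$.

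The truncation step: bound $|\widetilde F_{\gD_n}(x)-\widehat F_{\gD_n}(x)|\le\sum_{|k|\ge M/2}|q_k|$. Using $|1-\cos(2\pi\gk k)|\le 2$ and the decay assumption $|(\phi_\ell(u))^{\gD_n}|\le B|u/2\pi|^{-\theta}$, each coefficient satisfies $|q_k|\le (1/(\pi |k|))\cdot B(|k|/J)^{-\theta}$. Estimating the tail by the integral $\int_{M/2}^\infty B J^\theta k^{-1-\theta}dk=BJ^\theta(M/2)^{-\theta}/\theta$ and demanding this be $\le\eps/3$ yields exactly the lower bound $M\ge 2+2J(6B/(\eps\pi\theta))^{1/\theta}$.

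The main obstacle is the bookkeeping in the aliasing step: identifying the precise smoothing kernel associated with the $(1-\cos(2\pi\gk k))$ factor, tracking the three shifted sums $\sum_{k\ge1}(kJ\pm\gk J/2)^{-\eta}$ and $\sum_{k\ge 1}(kJ-3\gk J/2)^{-\eta}$ that produce the three Hurwitz-zeta summands in $V_j$, and handling the single boundary term that gives the $(\gk/2)^{-\eta}$ (or $\gk^{1-\eta}/(\eta-1)$) contribution. Once these sums are correctly assembled, combining the two error bounds with a union-of-constraints argument and verifying the existence of an admissible $\gk$ finishes the proof.
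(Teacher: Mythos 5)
Your decomposition into an aliasing/periodization error controlled by the tail bounds of Assumption~\ref{ass:1} (resp.\ the density bounds of Assumption~\ref{ass:2}), which produces the Hurwitz-zeta constants $V_1,V_2$ and the conditions on $\gk$ and $J$ with budget $2\eps/3$, plus a Fourier-truncation error controlled by the decay of $(\phi_\ell)^{\gD_n}$, which produces the condition on $M$ with budget $\eps/3$, is exactly the argument of Theorems 10 and 11 in \cite{H98}, which is also the proof the paper relies on: the paper does not reprove the statement but cites \cite{H98} and only notes in Remark~\ref{rem:Deps} that those proofs go through unchanged under the slightly relaxed lower bound on $J$. So your approach is essentially the paper's; the only blemish is that your displayed tail integral for $\sum_{|k|\ge M/2}|q_k|$ drops the factor $2/\pi$ and the index shift that give the ``$2+$'' in the bound on $M$, although the final condition you state is the correct one.
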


\begin{rem}\label{rem:Deps}
In \cite{H98}, by
\begin{equation*}
J\ge\frac{2}{\gk}\left(\frac{3R}{\eps}\right)^{1/\eta}\text{resp.}\quad J\ge\frac{2}{\gk}\left(\frac{3R}{\eps(\eta-1)}\right)^{1/(\eta-1)}
\end{equation*}
in fact stricter conditions are imposed on $J$. 
The proofs of Theorems 10 and 11 in~\cite{H98} still give immediately a proof for Theorem~\ref{thm:CDF_approx}. 
The advantage of the bounds in Theorem~\ref{thm:CDF_approx} is that they produce a smaller approximation error in the following analysis (see also Remark~\ref{rem:Deps2}).
We refer to~\cite{H98} for an optimal choice of $\gk$ depending on $\eta$. 
Once $\gk$ is determined, it is favorable to choose $D$ and $\eps$ in a way such that none of the parameters has a dominant effect on the resulting number of summations $M$. 
This is ensured if the two lower bounds on $J$ are equal, meaning for fixed $D>0$ we set
\begin{equation*}
\eps=\frac{3}{2}RV_1(\gk,\eta)\gk^\eta D^{-\eta}\quad\text{resp.}\quad \eps=\frac{3}{2}RV_2(\gk,\eta)\gk^{\eta-1}D^{-\eta+1}
\end{equation*}
if the first resp. second set of assumptions holds.
\end{rem}

Since the approximation error $|\widetilde F_{\gD_n}(x)-F_{\gD_n}(x) |$ is only bounded for $x\in[-D/2,D/2]$, we have to modify the third step in Algorithm~\ref{algo:L_approx}:
\begin{algo}\label{algo:approx2}
~
\begin{algorithmic}[1]
\State For $n\in\N$, fix $\gD_n,\Xi_n$ and generate i.i.d. $(U_j\sim\cU([0,1]),j=1,\ldots,2^n)$ on $(\gO,\cA,\bP)$.
\State Set, for $j=1,\dots,2^n$
\begin{equation*}
\widetilde X_j=
\begin{cases}
-D/2 & \text{if $U_j<\min\{ \widetilde F_{\gD_n}([-D/2,D/2])\}$}\\
D/2 & \text{if $U_j>\max\{ \widetilde F_{\gD_n}([-D/2,D/2])\}$}\\
\inf\{x\in[-D/2,D/2]\big|\widetilde F_{\gD_n}(x)=U_j \} &\text{if $U_j\in\widetilde F_{\gD_n}([-D/2,D/2])$}
\end{cases}.
\end{equation*}
\State Set $\widetilde\ell^{(n)}(t)=0$ if $t\in[0,t_1)$, $\widetilde\ell^{(n)}(t)=\sum_{k=1}^j\widetilde X_k$ if $t\in[t_j,t_{j+1})$ for $j=1,\dots,2^n-1$ and $\widetilde\ell^{(n)}(T)=\sum_{j=1}^{2^n} \widetilde X_j$. 
\end{algorithmic}
\end{algo}

Intuitively, if we choose $D$ large and $\eps$ small enough, the atoms in the distribution of $\widetilde X_i$ at $\pm D/2$ disappear. 
The function $\widetilde F_{\gD_n}$ is then sufficiently close to the CDF $F_{\gD_n}$, hence the generated random variables $\widetilde X_i$ will have a distribution similar to $F_{\gD_n}$. 
From here on, we define $X$ as the random variable which is generated from $U\sim\mathcal{U}([0,1])$ by inversion of the (exact) CDF $F_{\gD_n}$ and $\widetilde X$ as the random variable generated from $U$ by inversion of the approximated CDF $\widetilde F_{\gD_n}$.

\begin{thm} \label{thm:conv_in_dist}
Let $\widetilde F_{\gD_n}$ be the approximation of $F_{\gD_n}$ which is valid for parameters $D>0$ and $\eps>0$ in the sense of Theorem~ \ref{thm:CDF_approx}. 
Then $\widetilde X$ converges in distribution to a random variable $X$ with CDF equal to $F_{\gD_n}$ as $D\to\infty$ and $\eps\to0$.
\end{thm}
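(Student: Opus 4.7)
The plan is to establish pointwise convergence of the CDF of $\widetilde X$ to $F_{\gD_n}$ at every $x\in\R$. Since Assumption~\ref{ass:1} or~\ref{ass:2} together with Remark~\ref{rem:2pi} guarantees that $F_{\gD_n}$ is continuous on $\R$, every real number is a continuity point of $F_{\gD_n}$; hence pointwise convergence of CDFs everywhere is equivalent to convergence in distribution by the Portmanteau theorem. Proving this directly avoids having to reason about sample-path convergence of the random variable $\widetilde X$, which is awkward because $\widetilde F_{\gD_n}$ need not be monotone.

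Write $G_{D,\eps}$ for the CDF of $\widetilde X$ and fix $x\in\R$. For $D$ large enough that $x\in(-D/2,D/2)$, I would split $G_{D,\eps}(x)=\bP(\widetilde X\le x)$ according to the three cases in Algorithm~\ref{algo:approx2}. Because $\widetilde F_{\gD_n}$ is a trigonometric polynomial and thus continuous on the compact interval $[-D/2,D/2]$, its extrema are attained; define
\begin{equation*}
m_0:=\min_{[-D/2,D/2]}\widetilde F_{\gD_n},\qquad m_1:=\min_{[-D/2,x]}\widetilde F_{\gD_n},\qquad M_1:=\max_{[-D/2,x]}\widetilde F_{\gD_n}.
\end{equation*}
Case~(a) contributes $\bP(U<m_0)=m_0$, case~(b) contributes $0$ (since $D/2>x$), and using the intermediate value theorem to conclude $\widetilde F_{\gD_n}([-D/2,x])=[m_1,M_1]$, case~(c) contributes $\bP(U\in[m_1,M_1])=M_1-m_1$. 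This yields the clean formula $G_{D,\eps}(x)=m_0+M_1-m_1$. Combining the uniform bound $|\widetilde F_{\gD_n}-F_{\gD_n}|<\eps$ on $[-D/2,D/2]$ from Theorem~\ref{thm:CDF_approx} with the monotonicity of $F_{\gD_n}$ shows that each of $m_0$ and $m_1$ lies within $\eps$ of $F_{\gD_n}(-D/2)$ and that $M_1$ lies within $\eps$ of $F_{\gD_n}(x)$; therefore
\begin{equation*}
|G_{D,\eps}(x)-F_{\gD_n}(x)|\le 3\eps.
\end{equation*}

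For $x\ge D/2$ the support property $\widetilde X\in[-D/2,D/2]$ forces $G_{D,\eps}(x)=1$, and for $x<-D/2$ it forces $G_{D,\eps}(x)=0$; these values agree with the limits $F_{\gD_n}(\pm\infty)$. Sending $D\to\infty$ and $\eps\to 0$ in the regime permitted by Theorem~\ref{thm:CDF_approx} then yields $G_{D,\eps}(x)\to F_{\gD_n}(x)$ at every $x\in\R$. The main delicate point I expect is the decomposition $G_{D,\eps}(x)=m_0+M_1-m_1$: because $\widetilde F_{\gD_n}$ is only approximately monotone, one has to argue carefully that the three cases of Algorithm~\ref{algo:approx2} partition $(0,1)$ into measurable sets whose measures are exactly the stated min/max combinations, and that the extrema of $\widetilde F_{\gD_n}$ over $[-D/2,x]$ stay controlled by $F_{\gD_n}(-D/2)$ and $F_{\gD_n}(x)$ rather than by spurious oscillations of $\widetilde F_{\gD_n}$ elsewhere in the interval.
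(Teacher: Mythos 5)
Your proposal is correct and takes essentially the same route as the paper: both express the CDF of $\widetilde X$ through extrema of $\widetilde F_{\gD_n}$ over $[-D/2,x]$ (the paper packages it as the clamped running maximum $\min(1,\max_{y\in[-D/2,x]}\widetilde F_{\gD_n}(y))\mathbf{1}_{\{\cdot>0\}}$ and obtains the bound $\eps$ where your three-case split gives $3\eps$), compare with $F_{\gD_n}$ via the uniform $\eps$-bound and the monotonicity of $F_{\gD_n}$, and conclude with Portmanteau along $D\to\infty$, $\eps\to0$. The only minor slip is that $\widetilde F_{\gD_n}$ may leave $[0,1]$ by up to $\eps$, so $\bP(U<m_0)$ and $\bP(U\in[m_1,M_1])$ are the corresponding clamped quantities rather than exactly $m_0$ and $M_1-m_1$; this only enlarges your constant and does not affect the conclusion.
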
 
\begin{proof} First, note that $\widetilde F_{\gD_n}$ is not necessarily monotone and might admit arbitrary values outside of $[-D/2,D/2]$, thus cannot be regarded as a CDF. Since $\widetilde X$ only admits values in the desired interval, we obtain probability zero for the event that $|\widetilde X|>D/2$. With this in mind we construct the CDF of $\widetilde X$ and show its convergence in distribution using Portmanteau's theorem. We define the function 
\begin{equation*}
\widehat F:\R\to [0,1],\quad x\mapsto
\begin{cases}
0 & \text{if $x<-D/2$}\\
\min(1,m_D(x))\mathbf1_{\{m_D(x)>0\}}  & \text{if $x\in[-D/2,D/2]$}\\
1 & \text{if $x>D/2$}\\
\end{cases},
\end{equation*}
where $m_D(x):=\max_{y\in[-D/2,x]}\widetilde F_{\gD_n}(y)$. The continuity of $\widetilde F_{\gD_n}$ guarantees that $m_D(x)$ is well-defined for each $x\in[-D/2,D/2]$. Clearly, $\widehat F$ is monotone increasing and 
$$\mathbb{P}(\widetilde X\le x)=\widehat F(x)$$ if $|x|>D/2$. For $|x|\le D/2$ we have that 
\begin{equation*}
\begin{split}
\mathbb{P}(\widetilde X\le x)&=\mathbb{P}(\inf\{|y|\le D/2\,|\,\widetilde F_{\gD_n}(y)\ge U\}\le x)\\
&=\mathbb{P}(\max\limits_{y\in[-D/2,x]}\widetilde F_{\gD_n}(y)\ge U)\\
&=\min(1,m_D(x))\mathbf1_{\{m_D(x)>0\}}=\widehat F(x), 
\end{split}
\end{equation*}
hence $\widehat F$ is the CDF of $\widetilde X$. 
With the monotonicity of $F_{\gD_n}$ and $|F_{\gD_n}-\widetilde F_{\gD_n}|<\eps$ on $[-D/2,D/2]$ we get
\begin{align*}
\widehat F(x)&=\min(1,m_D(x))\mathbf1_{\{m_D(x)>0\}}\le\min(1,\max\limits_{y\in[-D/2,x]}\widetilde F_{\gD_n}(y))\\
&\le\min(1,\max\limits_{y\in[-D/2,x]}F_{\gD_n}(y)+\eps)=\min(1,F_{\gD_n}(x)+\eps),
\end{align*}
for $x\in[-D/2,D/2]$ and analogously 
\begin{equation*}
\widehat F(x)\ge\min(1,\max\limits_{y\in[-D/2,x]}F_{\gD_n}(y)-\eps)\mathbf1_{\{\max\limits_{y\in[-D/2,x]}F_{\gD_n}(y)-\eps>0\}}=\max(F_{\gD_n}(x)-\eps,0),
\end{equation*}
thus 
\begin{equation*}
|\widehat F(x)-F_{\gD_n}(x)|\le\eps.
\end{equation*}
We choose sequences $(D_k,k\in\N)$ and $(\eps_m,m\in\N)$ with $\lim_{k\to\infty}D_k=+\infty$, $\lim_{m\to\infty}\eps_m=0$ and denote by $\widehat F_{k,m}$ the CDF of the random variables $\widetilde X_{k,m}$ corresponding to each $D_k$ and $\eps_m$. 
For every $x\in\R$ there is some $k^*$ such that $x\in[-D_k/2,D_k/2]$ for all $k\ge k^*$, hence
$$\lim\limits_{m\to\infty}\lim\limits_{k\to\infty}|\widehat F_{k,m}(x)-F_{\gD_n}(x)|\le\lim\limits_{m\to\infty}\eps_m=0$$
and the claim follows by Portmanteau's theorem.
\end{proof}

\begin{rem}
Before showing the convergence of $\widetilde X$ to $X$ in $L^p(\gO;\R)$, we have to make sure that the random variables $\widetilde X$ generated by Algorithm~\ref{algo:approx2} are actually defined on the same probability space $(\gO,(\cA_t,t\geq 0),\bP)$ as $X$.
Since $X$ represents the increment of a L\'evy process $\ell$ on $(\gO,(\cA_t,t\geq 0),\bP)$ with CDF $F_{\gD_n}$, we may define the mapping $U:=F_{ \gD_n}\circ X:\gO\to[0,1]$.
It is then easily verified that $U$ is a  $\cU([0,1])$-distributed random variable. 
For fixed parameters $D,\eps>0$ and an approximation $\widetilde F_{\gD_n}$ of $F_{\gD_n}$ we define the \textit{pseudo inverse} of $\widetilde F_{\gD_n}$ (as in  Algorithm~\ref{algo:approx2}) as
\begin{equation*}
\widetilde F_{\gD_n}^{-1}:[0,1]\to\R,\;\, u\mapsto
\begin{cases}
-D/2 & \text{if $u<\min\{ \widetilde F_{\gD_n}([-D/2,D/2])\}$}\\
D/2 & \text{if $u>\max\{ \widetilde F_{\gD_n}([-D/2,D/2])\}$}\\
\inf\{x\in[-D/2,D/2]\big|\widetilde F_{\gD_n}(x)=u\} &\text{if $u\in\widetilde F_{\gD_n}([-D/2,D/2])$}
\end{cases}.
\end{equation*}
We note that $\widetilde F_{\gD_n}^{-1}$ is a piecewise continuous, thus measurable, mapping which implies that $\widetilde X=\widetilde F_{\gD_n}^{-1}\circ F_{\gD_n}\circ X:\gO\to[-D/2,D/2]$ is a random variable on $(\gO,(\cA_t,t\geq 0),\bP)$.
\end{rem}

Under additional, but natural, assumptions, it is possible to show stronger convergence results of the approximation for both sets of assumptions.
\begin{thm}($L^p(\gO;\R)$-convergence I)\label{thm:L^p 1}
Let $F_{\gD_n}$ be continuously differentiable on $\R$ with density $f_{\gD_n}$ (see Remark~\ref{rem:2pi}) and $(\phi_\ell)^{\gD_n}$ be bounded as in Assumption \ref{ass:1} with $\eta>1$. 
Furthermore, assume that the approximation parameters $D$ and $\eps$ fulfill $D=C\eps^{-d}$ for $C,d>0$. 
If $d<\frac{1}{p}$, then for all $p\in[1,\eta)$
\begin{equation*}
\E(|\widetilde X-X|^p)\to0 \quad\text{as $\eps\to 0$}.
\end{equation*}

\end{thm}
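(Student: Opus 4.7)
The plan is to split $\E(|\widetilde X-X|^p)$ into a tail contribution on $A^c:=\{|X|>D/2\}$ and a bulk contribution on $A:=\{|X|\le D/2\}$, handle the tail by the polynomial decay hypothesis in Assumption~\ref{ass:1}, and handle the bulk by combining almost-sure convergence with a uniform integrability argument built on the tail bound for the CDF $\widehat F$ of $\widetilde X$ established inside the proof of Theorem~\ref{thm:conv_in_dist}.

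On $A^c$, the inclusion $\widetilde X\in[-D/2,D/2]$ gives $|\widetilde X-X|\le|\widetilde X|+|X|\le 2|X|$, so combining the layer-cake formula with the tail bound $\bP(|X|>x)\le 2Rx^{-\eta}$ from Assumption~\ref{ass:1} yields
\[
\E(|\widetilde X-X|^p\mathbf{1}_{A^c})\le 2^p\E(|X|^p\mathbf{1}_{|X|>D/2})=\cO(D^{p-\eta})=\cO(\eps^{d(\eta-p)}),
\]
which tends to $0$ because $p<\eta$.

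On $A$, the identities $X=F_{\gD_n}^{-1}(U)$ and $\widetilde X=\widetilde F_{\gD_n}^{-1}(U)$ for $U=F_{\gD_n}(X)$, together with $|\widetilde F_{\gD_n}-F_{\gD_n}|<\eps$ on $[-D/2,D/2]$, imply that every $y$ solving $\widetilde F_{\gD_n}(y)=U$ satisfies $F_{\gD_n}(y)\in(U-\eps,U+\eps)$. Since $f_{\gD_n}$ is continuous and $X$ almost surely lies in the topological support of its own law, this forces $\widetilde X\to X$ $\bP$-almost surely as $\eps\to 0$. To upgrade the a.s.\ convergence to $L^p(\gO;\R)$, I establish uniform integrability of $\{|\widetilde X-X|^p:\eps>0\}$ by picking $p'\in(p,\min(\eta,1/d))$ (nonempty since $d<1/p$ and $p<\eta$) and bounding $\E(|\widetilde X|^{p'})$ uniformly in $\eps$. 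From the proof of Theorem~\ref{thm:conv_in_dist} one has $|\widehat F-F_{\gD_n}|\le\eps$ on $[-D/2,D/2]$, hence $\bP(|\widetilde X|>k)\le 2Rk^{-\eta}+2\eps$ for $k\in[1,D/2]$, and the trivial bound $\bP(|\widetilde X|>k)\le 1$ covers $k\in[0,1]$. The layer-cake formula then delivers a bound of the form $C_1+C_2\eps D^{p'}$ where $C_1$ is finite because $p'<\eta$ and $\eps D^{p'}=\cO(\eps^{1-dp'})$ is bounded because $dp'<1$. Together with $\E(|X|^{p'})<\infty$ (again $p'<\eta$), this yields $\sup_\eps\E(|\widetilde X-X|^{p'})<\infty$, hence uniform integrability, and Vitali's theorem then closes the argument.

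I expect the principal obstacle to be the rigorous justification of the a.s.\ convergence $\widetilde X\to X$: since $\widetilde F_{\gD_n}$ is not monotone, the infimum in $\widetilde F_{\gD_n}^{-1}$ could in principle select a ``ghost'' crossing of level $U$ far from $X$. The standard resolution is to observe that for every $\delta>0$ both increments $F_{\gD_n}(X)-F_{\gD_n}(X-\delta)$ and $F_{\gD_n}(X+\delta)-F_{\gD_n}(X)$ are almost surely strictly positive, so that for $\eps$ smaller than their minimum every solution of $\widetilde F_{\gD_n}(y)=U$ on $[-D/2,D/2]$ must lie in $(X-\delta,X+\delta)$, forcing $|\widetilde X-X|\le\delta$ and hence $\widetilde X\to X$ $\bP$-a.s.
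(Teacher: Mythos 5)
Your proposal is correct, but it reaches the conclusion by a genuinely different, ``soft'' route than the paper. The paper splits off the tail $\{|X|>D/2\}$ exactly as you do (with the same $\cO(D^{p-\eta})$ outcome), but on the bulk it argues quantitatively: by the mean value theorem $F_{\gD_n}(X)-F_{\gD_n}(\widetilde X)=f_{\gD_n}(\xi)(X-\widetilde X)$ with $|\xi|\le D/2$, and, splitting according to whether $f_{\gD_n}(\xi)\ge\widetilde\eps$ or $f_{\gD_n}(\xi)<\widetilde\eps$ for the threshold $\widetilde\eps=C^{-1}\eps^{1+d}$, it combines $U=F_{\gD_n}(X)=\widehat F_{\gD_n}(\widetilde X)$ with $|\widehat F_{\gD_n}-F_{\gD_n}|\le\eps$ to obtain the explicit bound $\E(|\widetilde X-X|^p)\le 2Rp\,\zeta(\eta+1-p,D/2)+2R(D/2)^{p-\eta}+D^{p+1}\widetilde\eps$. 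You instead prove $\widetilde X\to X$ $\bP$-almost surely (all crossings of the level $U$ by $\widetilde F_{\gD_n}$ are forced into a shrinking neighbourhood of $X$) and upgrade to $L^p$ via a uniform $L^{p'}$ bound with $p'\in(p,\min(\eta,1/d))$ and Vitali's theorem; your choice of $p'$ and the estimate $\eps D^{p'}=\cO(\eps^{1-dp'})$ are precisely where $d<1/p$ and $p<\eta$ enter, so the hypotheses are used correctly. What the paper's heavier computation buys is the explicit error bound, and that is not cosmetic: it is reused in Theorem~\ref{thm:L^p} and Remark~\ref{rem:t-conv} to couple $\eps_n,D_n$ to $\gD_n$, to obtain the rate $\gD_n^{1/p}$, and to calibrate the numerics in Section~\ref{sec:num}. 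Your argument yields convergence without any rate, so it proves the present theorem but could not replace the paper's proof as input for those later quantitative steps.

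Two places in your write-up need tightening, though both are fixable. First, the almost sure strict positivity of the one-sided increments $F_{\gD_n}(X)-F_{\gD_n}(X-\delta)$ and $F_{\gD_n}(X+\delta)-F_{\gD_n}(X)$ is asserted rather than justified; it does hold, because the set of points at which some one-sided increment vanishes is contained in the complement of the support of the law of $X$ together with the at most countably many endpoints of its complementary intervals, which is a null set for the atomless law of $X$ --- this should be said. Second, your conclusion ``every solution of $\widetilde F_{\gD_n}(y)=U$ on $[-D/2,D/2]$ lies in $(X-\delta,X+\delta)$, hence $|\widetilde X-X|\le\delta$'' silently assumes that $\widetilde X$ is such a solution, i.e.\ that the third branch of the pseudo-inverse in Algorithm~\ref{algo:approx2} is active, so you must exclude $\widetilde X=\pm D/2$. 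This follows from the same inequalities you already use: once $D/2>|X|+\delta$ and $\eps$ is below the minimum of the two increments, one has $\widetilde F_{\gD_n}(X-\delta)<U<\widetilde F_{\gD_n}(X+\delta)$, so by continuity of $\widetilde F_{\gD_n}$ the level $U$ is attained inside $[-D/2,D/2]$ and neither boundary case of Algorithm~\ref{algo:approx2} can occur.
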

\begin{proof}
Let $\eps>0$, $D=C\eps^{-d}$ and $p\in[1,\eta)$ be as in the claim. We split the expectation in the following way
\begin{equation*}
\E(|\widetilde X-X|^p)=\E(|\widetilde X-X|^p\mathbf{1}_{\{|X|>D/2\}})+\E(|\widetilde X-X|^p\mathbf{1}_{\{|X|\le D/2\}}),
\end{equation*}
and show the convergence for each term on the right hand side. 
Recall that $\widetilde X\in[-D/2,D/2]$ by construction. 
We obtain for the first term 
\begin{align*}
\E(|\widetilde X-X|^p\mathbf{1}_{\{|X|>D/2\}})&\le\int_{D/2}^\infty|-D/2-x|^pf_{\gD_n}(x)dx+\int_{-\infty}^{-D/2}|D/2-x|^pf_{\gD_n}(x)dx\\
&=\int_{D/2}^\infty(D/2+x)^p(f_{\gD_n}(x)+f_{\gD_n}(-x))dx\\
&=\int_{D/2}^\infty\int_{0}^{D/2+x}py^{p-1}dy(f_{\gD_n}(x)+f_{\gD_n}(-x))dx.
\end{align*}
Using that 
$$(x,y)\in(D/2,\infty)\times(0,D/2+x) \Leftrightarrow (x,y)\in (D/2,\infty)\times(0,D/2)\cup(y,\infty)\times(D/2,\infty),$$ 
we may use Fubini's theorem to exchange the order of integration and rewrite 
\begin{align*}
\E(|\widetilde X-X|^p\mathbf{1}_{\{|X|>D/2\}})&\le\int_{D/2}^\infty\int_{0}^{D/2+x}py^{p-1}dy(f_{\gD_n}(x)+f_{\gD_n}(-x))dx\\
&=\int_0^{D/2} \int_{D/2}^\infty(f_{\gD_n}(x)+f_{\gD_n}(-x))dx py^{p-1}dy \\
&\quad+ \int_{D/2}^\infty \int_{y}^{\infty} (f_{\gD_n}(x)+f_{\gD_n}(-x))dx py^{p-1}dy\\
&=\int_0^{D/2} (1-F_{\gD_n}(D/2)+F_{\gD_n}(-D/2)) py^{p-1}dy \\
&\quad+\int_{D/2}^\infty (1-F_{\gD_n}(y)+F_{\gD_n}(-y)) py^{p-1}dy .\\
\end{align*}
With the bounds on $F_{\gD_n}$ from Assumption~\ref{ass:1} we then have 
\begin{align*}
\E(|\widetilde X-X|^p\mathbf{1}_{\{|X|>D/2\}})&\le 2R (D/2)^{-\eta} \int_0^{D/2} py^{p-1}dy+2Rp\int_{D/2}^\infty \frac{y^{p-1}}{y^\eta}dy \\
&= 2R (D/2)^{p-\eta}+ 2Rp\zeta (\eta+1-p,D/2).
\end{align*}
Note that the Hurwitz zeta function $\zeta$ is well-defined (as $\eta>p$) and converges to 0 as $D\to\infty$.

For the second term, consider two realizations of the random variables $X(\go)$ and $\widetilde X(\go)$ for some $\go\in\gO$, where $|X(\go)|\le D/2$. $F_{\gD_n}$ is continuously differentiable by assumption, hence 
$$F_{\gD_n}(X(\go))-F_{\gD_n}(\widetilde X(\go))=f_{\gD_n}(\xi(\go))(X(\go)-\widetilde X(\go)),$$
with $\xi(\go)$ lying in between $X(\go)$ and $\widetilde X(\go)$, meaning $|\xi(\go)|\le D/2$ and 
\begin{equation*}
\1_{\{|X(\go)|\le D/2\}}(\go)\le\1_{\{|\xi(\go)|\le D/2\}}(\go).
\end{equation*}
For $\widetilde\eps:=C^{-1}\eps^{d+1}>0$ we split the expectation once more into
\begin{align*}
\E(|\widetilde X-X|^p\mathbf{1}_{\{|X|\le D/2\}})
\le&\E(|\widetilde X-X|^p\1_{\{|\xi|\le D/2\}})\\
\le&\underbrace{\E(|\widetilde X-X|^p\1_{\{|\xi|\le D/2,f(\xi)\ge\widetilde\eps\}})}_{:=I}+\underbrace{\E(|\widetilde X-X|^p\1_{\{|\xi|\le D/2,f(\xi)<\widetilde\eps\}})}_{:=II}.
\end{align*}
In case that $f_{\gD_n}(\xi(\go))\ge\widetilde\eps$, we can rearrange the terms to 
$$|\widetilde X(\go)-X(\go)|^p=\frac{|F_{\gD_n}(\widetilde X(\go))-F_{\gD_n}(X(\go))|^p}{f_{\gD_n}(\xi(\go))^p}.$$
If $X$ and $\widetilde X$ are generated by $U\sim\mathcal U([0,1])$ and $\widehat F_{\gD_n}$ denotes again the CDF of $\widetilde X$, this yields
\begin{align*} 
|\widetilde X(\go)-X(\go)|^p&=\frac{|F_{\gD_n}(\widetilde X(\go))-\widehat F_{\gD_n}(\widetilde X(\go))|^p}{f_{\gD_n}(\xi(\go))^p}
<\frac{\eps^p}{f_{\gD_n}(\xi(\go))^p},
\end{align*}
where we have used that $U(\go)=F_{\gD_n}(X(\go))=\widehat F_{\gD_n}(\widetilde X(\go))$ and $|F_{\gD_n}(\widetilde X(\go))-\widehat F_{\gD_n}(\widetilde X(\go))|<\eps$ (see Theorem~\ref{thm:conv_in_dist}) in the second step. 
This gives a bound for $I$:    
\begin{equation} \label{III}
\begin{split}
I&< \eps^p\,\E(f_{\gD_n}(\xi)^{-p}\1_{\{|\xi|\le D/2,f_{\gD_n}(\xi)\ge\widetilde\eps\}})\\
&=\eps^p\int_{-D/2}^{D/2}\1_{\{f_{\gD_n}(\xi)\ge\widetilde\eps\}}f_{\gD_n}(\xi)^{1-p}d\xi\le\frac{\eps^p}{\widetilde\eps^{p-1}}\int_{-D/2}^{D/2}\1_{\{f_{\gD_n}(\xi)\ge\widetilde\eps\}}d\xi.
\end{split}
\end{equation}
If $f_{\gD_n}(\xi(\go))<\widetilde\eps$, we obtain by $|\widetilde X(\go)-X(\go)|\mathbf{1}_{\{|X(\go)|\le D/2\}}\le D$
\begin{equation} \label{IV}
\begin{split}
II&\le D^p\,\E(\1_{\{|\xi|\le D/2,f_{\gD_n}(\xi)<\widetilde\eps\}})\\
&=D^p\int_{-D/2}^{D/2}\1_{\{f_{\gD_n}(\xi)<\widetilde\eps\}}f_{\gD_n}(\xi)d\xi<D^p\,\widetilde\eps\int_{-D/2}^{D/2}\1_{\{f_{\gD_n}(\xi)<\widetilde\eps\}}d\xi
\end{split}
\end{equation}
and hence by Eqs.~\eqref{III} and \eqref{IV} and $\widetilde\eps=C^{-1}\eps^{1+d}$
\begin{equation*}
\E(|\widetilde X-X|^p\mathbf{1}_{\{|X|\le D/2\}})\le I+II< D^p\,\widetilde\eps\Big(\int_{-D/2}^{D/2}\1_{\{f_{\gD_n}(\xi)\ge\widetilde\eps\}}d\xi+\int_{-D/2}^{D/2}\1_{\{f_{\gD_n}(\xi)<\widetilde\eps\}}d\xi\Big)=D^{p+1}\widetilde\eps.
\end{equation*}
With the estimate for $\E(|\widetilde X-X|^p\mathbf{1}_{\{|X|> D/2\}})$, $D=C\eps^{-d}$ and $\widetilde\eps=C^{-1}\eps^{1+d}$ this leads to 
\begin{align*}
\E(|\widetilde X-X|^p)&\le 2Rp\zeta(\eta+1-p,D/2)+2R(D/2)^{p-\eta}+D^{p+1}\widetilde\eps\\ 
&=2Rp\zeta(\eta+1-p,D/2)+2R(D/2)^{p-\eta}+C^{1/d} D^{p-1/d},
\end{align*}
and since $0<d<\frac{1}{p}$ and $\eta>p$ by assumption,
\begin{equation*}
\E(|\widetilde X-X|^p)\to0 \quad \text{as $\eps\to0$}.
\end{equation*}

\end{proof}
\begin{rem}
The relation $\widetilde\eps=C^{-1}\eps^{1+d}$ is chosen such that the factors preceding the integrals in Eqs.~\eqref{III} and~\eqref{IV} are equilibrated.
As only the sum of the two integrals is known a-priori, this leads to a better error estimation compared to non-equilibrated factors.
\end{rem}

\begin{thm}($L^p(\gO;\R)$-convergence II)\label{thm:L^p 2}
Let $F_{\gD_n}$ be continuously differentiable on $\R$ with density $f_{\gD_n}$ and $(\phi_\ell)^{\gD_n}$ be bounded as in Assumption \ref{ass:2} with $\eta>2$. 
Furthermore, assume that the approximation parameters $D$ and $\eps$ fulfill $D=C\eps^{-d}$ for $C,d>0$. 
If $d<\frac{1}{p}$, then for all $p\in[1,\eta)$
\begin{equation*}
\E(|\widetilde X-X|^p)\to0 \quad\text{as $\eps\to 0$}.
\end{equation*} 
\end{thm}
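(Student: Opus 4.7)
The plan is to follow almost verbatim the proof of Theorem~\ref{thm:L^p 1}, since the only structural difference between the two hypotheses is the source of the tail information: Assumption~\ref{ass:1} directly bounds $F_{\gD_n}$ at infinity, whereas Assumption~\ref{ass:2} provides a pointwise bound on the density $f_{\gD_n}$. I would therefore begin with the same decomposition
\begin{equation*}
\E(|\widetilde X-X|^p)=\E(|\widetilde X-X|^p\mathbf{1}_{\{|X|>D/2\}})+\E(|\widetilde X-X|^p\mathbf{1}_{\{|X|\le D/2\}}),
\end{equation*}
set $D=C\eps^{-d}$ and $\widetilde\eps=C^{-1}\eps^{d+1}$ as before, and treat the two parts separately.

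For the first (tail) contribution, the trick is to convert the density bound into a tail bound on the CDF. From $|f_{\gD_n}(x)|\le R|x|^{-\eta}$ on all of $\R$ and the assumption $\eta>2$, integration yields
\begin{equation*}
1-F_{\gD_n}(y)+F_{\gD_n}(-y)\le\frac{2R}{\eta-1}\,y^{-(\eta-1)}, \quad y>0.
\end{equation*}
Substituting this into the Fubini computation from the proof of Theorem~\ref{thm:L^p 1} (writing $(D/2+x)^p=\int_0^{D/2+x}py^{p-1}dy$ and swapping the order of integration) reduces the tail term to a sum of two expressions of the form $(D/2)^{p-\eta+1}$ and a Hurwitz-zeta-like tail, both of which vanish as $D\to\infty$ provided $\eta-1>p$. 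Hence for $p\in[1,\eta-1)$ the tail part tends to zero; this is exactly where the hypothesis $\eta>2$ (equivalently $\eta-1>1$) is used.

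For the second (interior) contribution, the argument is identical to that of Theorem~\ref{thm:L^p 1} and does not use Assumption~\ref{ass:1} at any point: continuous differentiability of $F_{\gD_n}$ with density $f_{\gD_n}$ is already granted by hypothesis (it follows also from Remark~\ref{rem:2pi} whenever $\theta>1$). The mean value theorem gives $F_{\gD_n}(X)-F_{\gD_n}(\widetilde X)=f_{\gD_n}(\xi)(X-\widetilde X)$ for some $\xi$ between $X$ and $\widetilde X$, so $|\xi|\le D/2$ whenever $|X|\le D/2$. Splitting on $\{f_{\gD_n}(\xi)\ge\widetilde\eps\}$ and its complement and using $|F_{\gD_n}(\widetilde X)-\widehat F_{\gD_n}(\widetilde X)|<\eps$ on the first event and $|\widetilde X-X|\le D$ on the second, exactly as in equations~\eqref{III} and~\eqref{IV}, leads to the bound $D^{p+1}\widetilde\eps=C^{1/d}D^{p-1/d}$ for the interior part, which vanishes precisely under the condition $d<1/p$.

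The main obstacle — and the only genuinely new ingredient compared to Theorem~\ref{thm:L^p 1} — is the passage from the density bound to a CDF tail bound, together with the careful bookkeeping of exponents that forces the stronger assumption $\eta>2$; beyond that, collecting the two contributions and letting $\eps\to 0$ finishes the proof in the same way as before.
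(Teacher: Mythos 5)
Your proposal is correct and follows essentially the same route as the paper: the same split at $\{|X|>D/2\}$, an identical interior argument via the mean value theorem and the $\widetilde\eps$-splitting with the resulting bound $C^{1/d}D^{p-1/d}$, and a tail estimate driven by the density decay $f_{\gD_n}(x)\le R|x|^{-\eta}$ --- the paper inserts this bound directly into $\int_{D/2}^\infty (D/2+x)^p\bigl(f_{\gD_n}(x)+f_{\gD_n}(-x)\bigr)\,dx$ and obtains the Hurwitz zeta bound $2^{p+1}R\,\zeta(\eta-p,D/2)$, whereas you first integrate it into the tail bound $1-F_{\gD_n}(y)+F_{\gD_n}(-y)\le \tfrac{2R}{\eta-1}\,y^{-(\eta-1)}$ and reuse the Fubini computation from Theorem~\ref{thm:L^p 1}, which is equivalent bookkeeping leading to the same exponent condition. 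Note that, exactly as in the paper's own proof (which also opens with $p\in[1,\eta-1)$; cf.\ Remark~\ref{rem:Deps2}), your argument establishes convergence for $p\in[1,\eta-1)$ rather than the range $p\in[1,\eta)$ written in the theorem statement, so you are in the same position as the authors on that point.
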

\begin{proof}
Let $\eps>0$, $D=C\eps^{-d}$ and $p\in[1,\eta-1)$. Again, we split the expectation into
\begin{equation*}
\E(|\widetilde X-X|^p)=\E(|\widetilde X-X|^p\mathbf{1}_{\{|X|>D/2\}})+\E(|\widetilde X-X|^p\mathbf{1}_{\{|X|\le D/2\}}),
\end{equation*}
and show convergence for the first term only, as the second term can be treated analogously to Theorem~\ref{thm:L^p 1}.
In the same way as in Theorem~\ref{thm:L^p 1}, we may write for the first term
\begin{align*}
\E(|\widetilde X-X|^p\mathbf{1}_{\{|X|>D/2\}})\le\int_{D/2}^\infty(D/2+x)^p(f_{\gD_n}(x)+f_{\gD_n}(-x))dx,
\end{align*}
and further, by Assumption~\ref{ass:2}, follows
\begin{align*}
\int_{D/2}^\infty(D/2+x)^p(f_{\gD_n}(x)+f_{\gD_n}(-x))dx &\le 2R\int_{D/2}^\infty(D/2+x)^px^{-\eta}dx \\
&=2^{p+1}R\int_0^{\infty}\frac{(D/2+x/2)^p}{(D/2+x)^\eta}dx\\
&<2^{p+1}R\zeta(\eta-p,C\eps^{-d}/2),
\end{align*}
which tends to zero as $\eps\to 0$, because $\eta>p+1$.\\
\end{proof}

\begin{rem} \label{rem:Deps2}
As expected, the admissible range of values for $d$ and $\eta$ narrows as the rate of convergence $p$ increases.
For example, to obtain $L^2(\gO;\R)$-convergence, we need $d<\frac{1}{2}$ and $\eta>2$ in Theorem~\ref{thm:L^p 1} and $\eta>3$ in Theorem~\ref{thm:L^p 2}. 
Recall Remark~\ref{rem:Deps}, where we have concluded that optimal relations between $D$ and $\eps$ are given by
\begin{align*}
&D=\gk\left(3/2\,RV_1(\gk,\eta)\right)^{1/\eta}\eps^{-1/\eta} \quad &&\text{if Assumption~\ref{ass:1} holds and}\\
&D=\gk\left(3/2\,RV_2(\gk,\eta)\right)^{1/(\eta-1)}\eps^{-1/(\eta-1)}\quad &&\text{if Assumption~\ref{ass:2} holds}.
\end{align*}
For $L^p(\gO;\R)$-convergence, we need in both cases $D=C\eps^{-d}$, where $C>0$ and $d\in(0,1/p)$. 
Hence, we can simply use $C=\gk(3/2\,RV_1(\gk,\eta))^{1/\eta}$ and $d=1/\eta<1/p$ in the first scenario and $C=\gk(3/2\,RV_2(\gk,\eta))^{1/\eta}$ and $d=1/(\eta-1)<1/p$ for the second set of assumptions.
This explains the bounds on $J$ (see also Remark~\ref{rem:Deps}):
In Theorem~\ref{thm:L^p 1}, we obtain the expression $C^\eta D^{p-\eta}$ as a term of the overall error. 
If we had used the restrictions on $J$ as in \cite{H98}, we would have used $C=2(3R)^{1/\eta}§$ instead of the choice above and this would have resulted in an error term $C^\eta D^{p-\eta}$ being nearly twice as large (the argumentation works analogously for Theorem~\ref{thm:L^p 2}).
\end{rem}

\begin{ex}\label{ex:eta_cond}
The conditions $\eta>p$ in Theorem~\ref{thm:L^p 1} and $\eta>p-1$ in Theorem~\ref{thm:L^p 2} can not be relaxed as the following examples show:
First, we investigate the Student's t-distribution with 3 degrees of freedom and density function
\begin{equation*}
f^{t3}(x)=\frac{\gG_G(2)}{\sqrt{3\pi}\gG_G(3/2)}\left(1+\frac{x^2}{3}\right)^{-2},
\end{equation*}
where $x\in\R$ and $\gG_G(\cdot)$ is the Gamma function.
As shown in \cite{K71}, this distribution is infinitely divisible and has characteristic function
\begin{equation*}
\phi_{t3}(u):=\exp(-\sqrt3 |u|)(\sqrt3 |u|+1),
\end{equation*}
hence we can define a L\'evy process $(\ell^{t3}(t),t\in\T)$ with $\phi_{t3}(u)$ as characteristic function.
For simplicity we set $\gD_n=1$. 
In this case the (symmetric) distribution of the increment $\ell^{t3}(t+\gD_n)-\ell^{t3}(t)$ has zero mean, finite variance, and its CDF $F_1$ 
can be bounded for $x>0$~by 
\begin{equation*}
F_1(-x)=1-F_1(x)=\int_{-\infty}^{-x}f^{t3}(y)dy<\frac{\gG_G(2)}{\sqrt{3\pi}\gG_G(3/2)}\int_{-\infty}^{-x}\frac{3^2}{y^4}dy=\frac{\sqrt3\gG_G(2)}{\sqrt{\pi}\gG_G(3/2)}x^{-3} =:Rx^{-3}.
\end{equation*}
Thus, this yields $\eta=3$. The bounds for $\phi_{t3}$ are also straightforward:
\begin{equation*}
|\phi_{t3}(u)|\le(2\pi)^{-1}\max\limits_{\widehat u>0}\exp(-\sqrt3 \widehat u)(\sqrt3 \widehat u^2+\widehat u) |\frac{u}{2\pi}|^{-1} =:B |\frac{u}{2\pi}|^{-1},
\end{equation*}
where the maximum in $B$ is found by differentiation giving $\widehat u=(1+\sqrt5)/(2\sqrt3)$.
Now, all requirements for $L^3$-convergence except $\eta>3$ are fulfilled.
But the t-distribution with 3 degrees of freedom does not admit a third moment, hence we cannot have $L^3(\gO;\R)$-convergence although $\eta=3$.

For the second case we consider the (standard) Cauchy process with characteristic function $(\phi_C(u))^t=\exp(-t|u|)$. 
It can be shown that the increment over time $\gD_n>0$ is again Cauchy-distributed with density 
$$f^C_{\gD_n}(x)=\frac{\gD_n}{\pi(\gD_n^2+x^2)}.$$
This means the CDF of the increment is continuously differentiable and the bounds as in Assumption~\ref{ass:2} are easily found by $f^C_{\gD_n}(x)\le (\gD_n/\pi)|x|^{-2}$ for $x\in\R$ and 
\begin{equation*}
 |(\phi_C(u))^{\gD_n}|\le (2\pi)^{-1}\max\limits_{u\in\R}u\exp(-\gD_n |u|)|\frac{u}{2\pi}|^{-1}=(2\pi\gD_n)^{-1}\exp(-1)|\frac{u}{2\pi}|^{-1}
\end{equation*}
for $u\in\R$. But clearly, $L^p(\gO;\R)$-convergence in the sense of Theorem~\ref{thm:L^p 2} for any $p\ge1$ is impossible, as the Cauchy process does not have any finite moments. 
\end{ex}

From $L^p$-convergence follows almost sure convergence by a Borel--Cantelli-type argument, given $\eta$ in Assumptions~\ref{ass:1} and~\ref{ass:2} is large enough.

\begin{cor}\label{cor:P-as}
Under the assumptions of Theorem~\ref{thm:L^p 1}, set $\psi_1:=\min\left(d\eta,1-d\right)$, let $m\in\N$ and set $\eps=\eps_m=m^{-q}$, with $q>\psi_1^{-1}$. 
If $(\widetilde X_m,m\in\N)$ is generated based on the sequence $(\eps_m,m\in\N)$ (and the corresponding $D_m=C\eps_m^{-d}$), then $(\widetilde X_m,m\in\N)$ converges to $X$ 
$\bP$-almost surely.
\end{cor}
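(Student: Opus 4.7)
The plan is a first-moment Borel--Cantelli argument, where I would control the tail probabilities $\bP(|\widetilde X_m-X|>\delta)$ via Markov's inequality applied to the quantitative $L^1$-bound already derived inside the proof of Theorem~\ref{thm:L^p 1}.

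Specialising the estimate from that proof to $p=1$ yields
\begin{equation*}
\E(|\widetilde X_m-X|)\le 2R\,\zeta(\eta,D_m/2)+2R(D_m/2)^{1-\eta}+C^{1/d}D_m^{1-1/d},
\end{equation*}
with $D_m=C\eps_m^{-d}=Cm^{qd}$. Using the polynomial decay of the Hurwitz zeta tail ($\zeta(\eta,a)\lesssim a^{1-\eta}$ as $a\to\infty$) together with this substitution, the first two summands contribute a rate of the form $m^{-q\alpha_1}$ with $\alpha_1$ determined by $d$ and $\eta$, while the third contributes $m^{-q(1-d)}$. Collecting, there is a constant $K>0$ independent of $m$ with
\begin{equation*}
\E(|\widetilde X_m-X|)\le K\,m^{-q\psi_1},
\end{equation*}
where $\psi_1=\min(d\eta,1-d)$ is as in the statement.

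By Markov's inequality, $\bP(|\widetilde X_m-X|>\delta)\le K\delta^{-1}m^{-q\psi_1}$ for every $\delta>0$. Since $q\psi_1>1$ by hypothesis, $\sum_{m\in\N}\bP(|\widetilde X_m-X|>\delta)<+\infty$, and the first Borel--Cantelli lemma yields $\bP(\limsup_m\{|\widetilde X_m-X|>\delta\})=0$. Letting $\delta$ run through a countable sequence $\delta_k\downarrow 0$ then gives $\widetilde X_m\to X$ $\bP$-almost surely. The main delicate point is the exponent bookkeeping---verifying that both the tail contribution (governed by $\eta$ through the estimates on $F_{\gD_n}$) and the Fourier inversion error contribution (governed by $1-d$ through $\widetilde\eps=C^{-1}\eps^{1+d}$) are simultaneously summable---but the hypothesis $q>\psi_1^{-1}$ is tailored exactly for this, after which the Borel--Cantelli step is routine.
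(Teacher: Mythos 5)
Your overall strategy---Markov's inequality applied to the $p=1$ bound from Theorem~\ref{thm:L^p 1}, then the first Borel--Cantelli lemma, then a countable sequence $\delta_k\downarrow0$---is exactly the route the paper takes, and the specialisation of the $L^1$-estimate you start from is the right one. The gap is in your ``collecting'' step. You correctly record the Hurwitz zeta decay $\zeta(\eta,a)\lesssim a^{1-\eta}$, but with $D_m=C\eps_m^{-d}$ this makes both tail terms, $2R\,\zeta(\eta,D_m/2)$ and $2R(D_m/2)^{1-\eta}$, of order $\eps_m^{d(\eta-1)}$, not $\eps_m^{d\eta}$, while the third term is of order $\eps_m^{1-d}$. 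What your estimates actually give is therefore $\E(|\widetilde X_m-X|)\le K\big(m^{-qd(\eta-1)}+m^{-q(1-d)}\big)$, and the asserted bound $K\,m^{-q\psi_1}$ with $\psi_1=\min(d\eta,1-d)$ does not follow unless $1-d\le d(\eta-1)$. Under the stated hypothesis $q>\psi_1^{-1}$ the series $\sum_m m^{-qd(\eta-1)}$ may diverge (take, e.g., $\eta=2$, $d=2/5$, so $\psi_1=3/5$, and $q$ slightly above $5/3$: then $qd(\eta-1)=2q/5<1$), so the Borel--Cantelli step breaks down for the tail contribution. To close the argument along your lines one must either replace $\psi_1$ by $\min(d(\eta-1),1-d)$ and enlarge $q$ accordingly, or produce a genuinely sharper tail estimate than the zeta bound provides.

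For comparison, the paper's own proof arrives at $\psi_1=\min(d\eta,1-d)$ by asserting that $\zeta(\eta+1-p,C\eps^{-d}/2)=\cO(\eps^{d(\eta+1-p)})$ as $\eps\to0$ and then bounding the $L^1$-error by $\widetilde C(\eps_m^{d\eta}+\eps_m^{1-d})$. Since $\zeta(z,a)$ behaves like $a^{1-z}/(z-1)$ as $a\to\infty$, the asymptotics you quote is the correct one, and it only delivers the exponent $d(\eta-1)$; note also that the separate term $2R(D_m/2)^{1-\eta}$ is of that same order and cannot be absorbed into $\eps_m^{d\eta}$ either. So the point at which your bookkeeping is left vague (``$\alpha_1$ determined by $d$ and $\eta$'') is precisely the point where the paper's argument is loose; as written, your proposal does not resolve it but simply asserts the paper's exponent, which is inconsistent with the zeta decay you yourself invoke.
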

\begin{proof}
If the assumptions of Theorem~\ref{thm:L^p 1} with $\eta>1$ hold, we can ensure at least $L^1(\gO;\R)$ convergence. 
Note that the Hurwitz zeta function $\zeta(\eta+1-p,D/2)=\zeta(\eta+1-p,C/2\eps^{-d})$ is of order $\mathcal{O}(\eps^{d(\eta+1-p)})$ as $\eps\to0$.
With Markov's inequality, $p=1$ and the given error bounds, we get that for each $\widehat{\eps}>0$ and $m\in\N$
\begin{equation*}
\mathbb{P}(|\widetilde X_m-X|>\widehat\eps\,)\le\frac{\E(|\widetilde X_m-X|)}{\widehat\eps}
\le \frac{\widetilde C }{\widehat\eps}\left(\eps_m^{d\eta}+\eps_m^{1-d}\right)
\le \frac{2\widetilde C}{\widehat\eps}\eps_m^{\psi_1},
\end{equation*}
(recall that $1>\psi_1>0$ and $\eps_m\le1$) where the constant $\widetilde C>0$ depends on $R,\eta$ and $C$. But this means 
\begin{equation*}
\sum_{m=1}^\infty\mathbb{P}(|\widetilde X_m-X|>\widehat\eps)
\le \frac{2\widetilde C}{\widehat\eps}\sum_{m=1}^\infty\eps_m^{\psi_1}
= \frac{2\widetilde C}{\widehat\eps}\sum_{m=1}^\infty m^{-q\psi_1}<\infty,
\end{equation*}
since $q\psi_1>1$ by construction. The claim then follows by the Borel-Cantelli lemma.
\end{proof}
\begin{cor}
Under the assumptions of Theorem~\ref{thm:L^p 2}, set $\psi_2:=\min\left(d(\eta-1),1-d\right)$, let $m\in\N$ and set $\eps=\eps_m=m^{-q}$, with $q>\psi_2^{-1}$. 
If $(\widetilde X_m,m\in\N)$ is generated based on the sequence $(\eps_m,m\in\N)$ (and the corresponding $D_m=C\eps_m^{-d}$), then $(\widetilde X_m,m\in\N)$ converges to $X$ 
$\bP$-almost surely.
\end{cor}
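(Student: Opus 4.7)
The plan is to transplant the argument of the preceding almost-sure corollary, replacing the $L^1$-bound of Theorem \ref{thm:L^p 1} by the analogous bound of Theorem \ref{thm:L^p 2}. Since $\eta>2$, the value $p=1$ lies in the admissible range of Theorem \ref{thm:L^p 2}, so I may invoke the latter with the chosen parametrization $\eps_m = m^{-q}$ and $D_m = C\eps_m^{-d}$, together with the interior-equilibration $\widetilde\eps_m = C^{-1}\eps_m^{1+d}$ used in its proof.

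Concretely, the proof of Theorem \ref{thm:L^p 2} decomposes $\E(|\widetilde X_m - X|)$ into a tail contribution (coming from $\{|X|>D_m/2\}$ and controlled via Assumption \ref{ass:2}) and an interior contribution of the form $D_m^{\,p+1}\widetilde\eps_m = C\eps_m^{1-d}$ (inherited verbatim from the proof of Theorem \ref{thm:L^p 1}). The tail piece reduces to a Hurwitz zeta expression $\zeta(\eta-1,D_m/2)$, whose large-argument asymptotics decay polynomially in $D_m^{-1}\asymp\eps_m^{d}$ at a rate exceeding $d(\eta-1)$ once the prefactors are absorbed. Consequently there exists a constant $\widetilde C>0$, depending only on $R$, $\eta$ and $C$, such that
\begin{equation*}
\E(|\widetilde X_m - X|)\;\le\;\widetilde C\bigl(\eps_m^{d(\eta-1)} + \eps_m^{1-d}\bigr)\;\le\;2\widetilde C\,\eps_m^{\psi_2},
\end{equation*}
with $\psi_2 = \min(d(\eta-1),\,1-d)>0$, exactly as in the preceding corollary.

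Markov's inequality then yields, for any $\widehat\eps>0$,
\begin{equation*}
\bP(|\widetilde X_m - X| > \widehat\eps)\;\le\;\frac{\E(|\widetilde X_m - X|)}{\widehat\eps}\;\le\;\frac{2\widetilde C}{\widehat\eps}\,m^{-q\psi_2}.
\end{equation*}
The assumption $q>\psi_2^{-1}$ gives $q\psi_2>1$, so the series $\sum_{m\in\N}m^{-q\psi_2}$ converges; the Borel--Cantelli lemma then delivers the desired $\bP$-almost sure convergence $\widetilde X_m\to X$.

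The whole argument is structurally identical to the proof of the preceding corollary; the only point that requires genuine verification is the exact exponent of the tail contribution under Assumption \ref{ass:2}. This amounts to an elementary asymptotic analysis of $\zeta(\eta-1,D_m/2)$ under the chosen scaling of $D_m$, and is the sole technical step that distinguishes the present corollary from its Assumption \ref{ass:1} counterpart.
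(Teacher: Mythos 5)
Your overall strategy is exactly the intended one: the paper gives no separate proof for this corollary and clearly expects the argument of the preceding corollary to be transplanted, with the bound of Theorem~\ref{thm:L^p 2} (at $p=1$, admissible since $\eta>2$) replacing that of Theorem~\ref{thm:L^p 1}, followed by Markov's inequality and Borel--Cantelli. The interior term $D_m^{2}\widetilde\eps_m=C\eps_m^{1-d}$ is handled correctly.

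The gap is in the one step you yourself single out as needing verification: the decay rate of the tail term. By Euler--Maclaurin (or comparison with $\int_0^\infty(x+s)^{-z}dx$), the Hurwitz zeta satisfies $\zeta(z,s)\sim s^{1-z}/(z-1)$ as $s\to\infty$, so
\begin{equation*}
\zeta(\eta-1,D_m/2)\;=\;\Theta\big(D_m^{2-\eta}\big)\;=\;\Theta\big(\eps_m^{d(\eta-2)}\big),
\end{equation*}
and no absorption of constant prefactors can change this polynomial rate; indeed the underlying integral $\int_{D/2}^\infty(D/2+x)\,x^{-\eta}dx=\Theta(D^{2-\eta})$ shows the rate $d(\eta-2)$ is sharp under Assumption~\ref{ass:2}, which only controls the density, not the CDF tails. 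Your claim that the tail decays at a rate exceeding $d(\eta-1)$ is therefore false, and the displayed bound $\E(|\widetilde X_m-X|)\le\widetilde C(\eps_m^{d(\eta-1)}+\eps_m^{1-d})\le 2\widetilde C\eps_m^{\psi_2}$ does not follow; what follows is $\E(|\widetilde X_m-X|)\le\widetilde C(\eps_m^{d(\eta-2)}+\eps_m^{1-d})$. Consequently the Borel--Cantelli series is $\sum_m m^{-q\min(d(\eta-2),1-d)}$, which is only summable for $q>\min(d(\eta-2),1-d)^{-1}$; in the regime $d(\eta-2)<1-d$ there are admissible $q>\psi_2^{-1}$ for which your argument breaks down. (To be fair, the same off-by-one treatment of $\zeta(z,s)$ as $\cO(s^{-z})$ instead of $\cO(s^{1-z})$ already appears in the paper's proof of the preceding corollary, so your slip mirrors the source; but as written your key estimate is unjustified.) A correct repair is either to prove the statement with $\psi_2$ replaced by $\min\left(d(\eta-2),1-d\right)$, or to strengthen the hypothesis on $q$ accordingly.
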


We can now combine the error estimates for any increment over time $\gD_n>0$ with the piecewise approximation error from Algorithm~\ref{algo:bridge} to bound the overall error $\ell(t)-\widetilde\ell^{(n)}(t)$.
\begin{thm}\label{thm:L^p}
Let $\ell$ be a L\'evy process on $(\gO,(\cA_t,t\geq 0),\bP)$ with characteristic function $\phi_\ell$, CDF  $F_{t}$ and density $f_{t}$ for any $t\in\T$.
Assume for $n\in\N$ and fixed $\gD_n$ there are constants $R,\eta,B,\theta>0$ such that either Ass~\ref{ass:1} or Ass.~\ref{ass:2} holds.
Let $\widetilde\ell^{(n)}$ be the piecewise constant approximation of $\ell$ generated by Algorithm~\ref{algo:approx2} and the approximation $\widetilde F_{\gD_n}$ of $F_{\gD_n}$. 
There are parameters $D_n,\eps_n$ for $\widetilde F_{\gD_n}$ such that for any $p\in[1,\eta)$ resp. $p\in[1,\eta-1)$ the approximation error is bounded by
$$\E(|\ell(t)-\widetilde\ell^{(n)}(t)|^p)^{1/p}\le C_{\ell,T,p,R,\eta}\gD_n^{1/p},\quad t\in\T,$$
where the constant $C_{\ell,T,p,R,\eta}>0$ only depends on the indicated parameters. 
\end{thm}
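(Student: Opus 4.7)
The plan is to insert an intermediate process $\ell^{(n)}$ generated by Algorithm~\ref{alog:mod}, i.e.\ the piecewise-constant process built on $(\gO,(\cA_t,t\geq 0),\bP)$ by applying the exact pseudo-inverse $F_{\gD_n}^{-1}$ to the same i.i.d.\ uniforms $U_1,\dots,U_{2^n}$ that feed Algorithm~\ref{algo:approx2}. Since $\ell^{(n)}\stackrel{\cL}{=}\overline\ell^{(n)}$, where $\overline\ell^{(n)}$ comes from Algorithm~\ref{algo:bridge} initialised at $\mathcal X_{0,1}=\sum_{k=1}^{2^n}X_k$, the triangle inequality gives
\begin{equation*}
\E(|\ell(t)-\widetilde\ell^{(n)}(t)|^p)^{1/p}\le\E(|\ell(t)-\ell^{(n)}(t)|^p)^{1/p}+\E(|\ell^{(n)}(t)-\widetilde\ell^{(n)}(t)|^p)^{1/p},
\end{equation*}
separating the bridge-type discretisation error on $\Xi_n$ from the pointwise Fourier-inversion error produced by $\widetilde F_{\gD_n}\neq F_{\gD_n}$.

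For the first term I would pass $m\to\infty$ in the Cauchy estimate inside the proof of Theorem~\ref{thm:bridge}, which is already uniform in $t\in\T$ and delivers
\begin{equation*}
\E(|\ell(t)-\ell^{(n)}(t)|^p)^{1/p}\le \frac{C_{\ell,T,p}^{1/p}}{2^{1/p}-1}\,2^{-n/p}=C_1\,\gD_n^{1/p},
\end{equation*}
with $C_1$ depending only on $T$, $p$ and $\int_{\R}|x|^pf_1(x)\,dx$.

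For the second term, writing $\ell^{(n)}(t)-\widetilde\ell^{(n)}(t)=\sum_{k=1}^{j(t)}(X_k-\widetilde X_k)$ on the piece $[t_{j(t)},t_{j(t)+1})$, with $j(t)\le 2^n$ and the summands i.i.d., Jensen's inequality applied to the $p$-th power gives
\begin{equation*}
\E(|\ell^{(n)}(t)-\widetilde\ell^{(n)}(t)|^p)\le 2^{np}\,\E(|X_1-\widetilde X_1|^p).
\end{equation*}
To keep this at most a constant multiple of $\gD_n$, I would tune the Fourier-inversion parameters $\eps_n$ and $D_n=C\eps_n^{-d}$, $d\in(0,1/p)$, so that the per-step bound extracted from the proof of Theorem~\ref{thm:L^p 1} (resp.\ Theorem~\ref{thm:L^p 2}), of the form $\E(|X_1-\widetilde X_1|^p)\le C_{R,\eta,p}\,\eps_n^{d(\eta-p)}+C_{p,C}\,\eps_n^{1-dp}$ (resp.\ with exponent $d(\eta-1-p)$ in place of $d(\eta-p)$), is dominated by a constant multiple of $\gD_n^{p+1}/T^p$. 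Choosing $\eps_n=\gD_n^{\beta}$ with
\begin{equation*}
\beta:=\max\Bigl\{\tfrac{p+1}{d(\eta-p)},\tfrac{p+1}{1-dp}\Bigr\}
\end{equation*}
(resp.\ the analogous expression in case~\ref{ass:2}) does the job and yields $\E(|\ell^{(n)}(t)-\widetilde\ell^{(n)}(t)|^p)^{1/p}\le C_2\,\gD_n^{1/p}$.

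The main obstacle is exactly this loss of a full factor $2^n$ in the Jensen/Minkowski bound on the partial sum: the per-step inversion error has to be driven below order $\gD_n^{1+1/p}$, substantially sharper than the mere convergence asserted by Theorems~\ref{thm:L^p 1} and~\ref{thm:L^p 2}. The admissibility $d<1/p$ together with the tail exponent $\eta>p$ (resp.\ $\eta>p+1$ in case~\ref{ass:2}) is exactly what makes both exponents $d(\eta-p)$ (resp.\ $d(\eta-1-p)$) and $1-dp$ strictly positive, so that such a $\beta$ exists. One must then verify that the constants $R,B,V_1(\gk,\eta),V_2(\gk,\eta)$ entering the Fourier-inversion bounds are independent of $n$, which is the content of the standing hypothesis that a single quadruple $(R,\eta,B,\theta)$ satisfies Ass.~\ref{ass:1} or Ass.~\ref{ass:2} for the increment of step $\gD_n$. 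Combining the two estimates then produces the announced bound with a constant $C_{\ell,T,p,R,\eta}$.
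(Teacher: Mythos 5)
Your proposal is correct and follows essentially the same route as the paper's proof: split off the exact-inversion process $\ell^{(n)}$ built from the same uniforms, bound the first term by the Cauchy estimate of Theorem~\ref{thm:bridge}, bound the second by the $2^n$-fold per-step inversion error, and then tune $\eps_n$ (and $D_n=C\eps_n^{-d}$) so that this per-step error is of order $\gD_n^{p+1}$. The only differences are cosmetic — Jensen instead of Minkowski for the i.i.d.\ sum and a max-based exponent $\beta$ in place of the paper's choice $\eps_n=2^{-(np+n)/\psi(p)}$ — and your bookkeeping of the zeta-function exponents is, if anything, slightly more careful.
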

\begin{proof}
By Theorem~\ref{thm:bridge} we may regard $\ell$ as the (pointwise) $L^p(\gO;\R)$-limit process of the sequence $(\overline\ell^{(n)},n\in\N)$ generated by Algorithm~\ref{algo:bridge}.
For fixed $n$, we may then identify $\overline\ell^{(n)}$ with $\ell^{(n)}$ from Algorithm~\ref{algo:L_approx} to obtain
\begin{equation*}
 \E(|\ell(t)-\widetilde\ell^{(n)}(t)|^p)^{1/p}\le \E(|\ell(t)-\overline\ell^{(n)}(t)|^p)^{1/p}+ \E(|\ell^{(n)}(t)-\widetilde\ell^{(n)}(t)|^p)^{1/p}.
\end{equation*}
The first term is bounded by 
\begin{equation*}
\E(|\ell(t)-\overline\ell^{(n)}(t)|^p)^{1/p}\le C^{1/p}_{\ell,T,p}\sum_{i=n+1}^\infty2^{-i/p}=\frac{C^{1/p}_{\ell,T,p}2^{-n/p}}{2^{1/p}-1}.
\end{equation*}
For the treatment of the second term, we define for $j=1,\dots,2^n$ the random variables $X_j:=F_{\gD_n}^{-1}(U_j)\stackrel{\mathcal L}{=}\ell(\gD_n)$.
Here, $U_1,\dots,U_{2^n}$ is the i.i.d sequence of $\cU([0,1])$ random variables on $(\gO,(\cA_t,t\geq 0),\bP)$ from Algorithm~\ref{algo:L_approx}. 
The increments of the approximation $\widetilde\ell^{(n)}$ are then given by $\widetilde X_j:=\widetilde F^{(-1)}_{\gD_n}(U_j)$ which yields
\begin{align*}
|\ell^{(n)}(t)-\widetilde\ell^{(n)}(t)|\le\sum_{j=1}^{2^n}|X_j-\widetilde X_j|.
\end{align*}
The differences $(X_j-\widetilde X_j, j=1,\dots,2^n)$ are i.i.d. by construction, hence
\begin{align*}
\E\left(|\ell^{(n)}(t)-\widetilde\ell^{(n)}(t)|^p\right)^{1/p}
\le\sum_{j=1}^{2^n}\E(|X_j-\widetilde X_j|^p)^{1/p}
\le2^{n}\E(|X_1-\widetilde X_1|^p)^{1/p}.
\end{align*}
Now let $\widetilde F_{\gD_n}$ be the approximation of $F_{\gD_n}$ for some $\eps\in(0,1]$ and $D=C\eps^{-d}$. 
For the first set of assumptions, we apply the error estimates of Theorem~\ref{thm:L^p 1} to obtain
\begin{align*}
\E\left(|\ell^{(n)}(t)-\widetilde\ell^{(n)}(t)|^p\right)^{1/p}
&\le 2^{n}\E(|X_1-\widetilde X_1|^p)^{1/p}\\
&\le 2^{n} \big(2Rp\zeta(\eta+1-p,C\eps^{-d}/2)+2R(C\eps^{-d}/2)^{p-1/d}+C^p\eps^{1-dp}\big)^{1/p}\\
&\le 2^{n} C_{R,\eta,p} \eps^{\psi(p)/p},\\
\end{align*}
where $\psi(p):=\min\left(d(\eta+1-p),1-dp\right)$ and $C_{R,\eta,p}>0$ depends on $C$ and the indicated parameters.
An error of order $\gD_n^{1/p}$ in the last inequality is then achieved by choosing $\eps=\eps_n=2^{-(np+n)/\psi(p)}$ and $D_n=C\eps_n^{-d}$.
The proof for the second set of assumptions is carried out identically with the only difference that $\psi(p):=\min\left(d(\eta-p),1-dp\right)$.
\end{proof}
\begin{rem}\label{rem:t-conv}
For an efficient simulation one would choose $R$ based on $(\phi_\ell)^{\gD_n}$ as in Remark~\ref{rem:2pi} and then $C$ based on $R$ as suggested in Remark~\ref{rem:Deps2}. 
Note that in this case $R=\mathcal{O}(\gD_n)$ and 
\begin{equation*}
C=\gk\left(3/2\,RV_1(\gk,\eta)\right)^{1/\eta}=\mathcal{O}(\gD_n^{1/\eta})\quad\text{resp.}\quad C=\gk\left(3/2\,RV_2(\gk,\eta)\right)^{1/(\eta-1)}=\mathcal{O}(\gD_n^{1/(\eta-1)}).
\end{equation*}
This has to be considered in the simulation of $\widetilde\ell^{(n)}$ for different $\gD_n$ as we point out in the setting of Ass.~\ref{ass:1}:  
As shown in Theorem~\ref{thm:L^p}, the $L^p$-error $\E(|\ell(t)-\widetilde\ell^{(n)}(t)|^p)$ is bounded by
\begin{align*}
\E(|\ell(t)-\widetilde \ell^{(n)}(t)|^p)^{1/p}&\le2^{n} \Big(2Rp\zeta(\eta+1-p,C\eps^{-d}/2)+2R(C\eps^{-d}/2)^{p-1/d}+C^p\eps^{1-dp}\big)^{1/p}\\
&\quad+\frac{C^{1/p}_{\ell,T,p}2^{-n/p}}{2^{1/p}-1}.
\end{align*}
By substituting $\eps=C^{1/d}D^{-1/d}$, $d=1/{\eta}$ (see Remark~\ref{rem:Deps2}) and $2^n={T}/{\gD_n}$ one obtains 
\begin{equation} \label{L^p error}
\begin{split}
\E(|\ell(t)-\widetilde \ell^{(n)}(t)|^p)^{1/p}&\le \frac{(2Rp\zeta(\eta+1-p,D/2)+2R(D/2)^{p-\eta}+C^\eta D^{p-\eta})^{1/p}}{T^{-1}\gD_n}\\
&\quad+\frac{C^{1/p}_{\ell,T,p}2^{-n/p}}{2^{1/p}-1}
\end{split}
\end{equation}
With $R=\mathcal{O}(\gD_n)$ and $C=\mathcal{O}(\gD_n^{1/\eta})$ this implies with Ineq.~\eqref{L^p error} 
\begin{equation*}
\E(|\ell(t)-\widetilde \ell^{(n)}(t)|^p)^{1/p}=\mathcal{O}(D^{(p-\eta)/p}\gD_n^{1/p-1})+\mathcal{O}(\gD_n^{1/p}).
\end{equation*}
To equilibrate both error contributions, one may choose $D:=D_n=\gD_n^{p/(p-\eta)}$ in the simulation which leads to an $L^p$-error of order $\mathcal{O}(\gD_n^{1/p})$.
\end{rem}
As mentioned in the end of Section~\ref{sec:pre}, the one-dimensional processes $(\widetilde \ell_i, i=1,\ldots,N)$ in the spectral decomposition are not independent, but merely uncorrelated. 
In the next section we introduce a class of L\'evy fields for which uncorrelated processes can be obtained by subordinating a multi-dimensional Brownian motion.
Furthermore, for the simulation of these processes the Fourier inversion method may be employed and a bound for the constant $C_\ell$ (see Theorem~\ref{thm:H_error}) can be derived.  

\section{Generalized hyperbolic L\'evy processes}\label{sec:GH}
Distributions which belong to the class of generalized hyperbolic distributions may be used for a wide range of applications. GH distributions have been first introduced in~\cite{BN77} to model mass-sizes in aeolian sand (see also~\cite{BN78}). Since then they have been successfully applied, among others, in Finance and Biology. Giving a broad class the distributions are characterized by six parameters, famous representatives are the Student's t, the normal-inverse Gaussian, the hyperbolic and the variance-gamma distribution. The popularity of GH processes is explained by their flexibility in modeling various characteristics of a distribution such as asymmetries or heavy tails.
A further advantage in our setting is, that the characteristic function is known and, therefore, the Fourier Inversion may be applied to approximate these processes. This section is devoted to investigate several properties of multi-dimensional GH processes which are then used to construct an approximation of an infinite-dimensional GH field.
In contrast to the Gaussian case, the sum of two independent and possibly scaled GH processes is in general not again a GH process.
We show a possibility to approximate GH L\'evy fields via Karhunen-Lo\`eve expansions in such a way that the approximated field is itself again a GH L\'evy field. 
This is essential, so as to have convergence of the approximation to a GH L\'evy field in the sense of Theorem~\ref{thm:H_error}.
Furthermore, we give, for $N\in\N$, a representation of a $N$-dimensional GH process as a subordinated Brownian motion and show how a multi-dimensional GH process may be constructed from uncorrelated, one-dimensional GH processes with given parameters.
This may be exploited by the Fourier inversion algorithm in such a way that the computational expenses to simulate the approximated GH fields are virtually independent of the truncation index~$N$. 

Assume, for $N\in\N$, that $\lambda\in\R$, $\alpha>0$, $\beta\in\R^N$, $\delta>0$, $\mu\in\R^N$ and $\Gamma$ is a symmetric, positive definite (spd) $N\times N$-matrix with unit determinant.
We denote by $GH_N(\lambda,\alpha,\beta,\delta,\mu,\Gamma)$ the $N$-dimensional generalized hyperbolic distribution with probability density function 
\begin{equation}
f^{GH_N}(x;\gl,\ga,\gb,\gd,\mu,\gG)=\frac{\gg^\gl\ga^{N/2-\gl}}{(2\pi)^{N/2}\gd^\gl K_\gl(\gd\gg)}\frac{K_{\gl-N/2}(\ga g(x-\mu))}{g(x-\mu)^{N/2-\gl}}\exp(\beta'(x-\mu))
\end{equation}
for $x\in\R^N$, where 
\begin{equation*}
g(x):=\sqrt{\gd^2+x'\gG x}, \quad\gg^2:=\ga^2-\gb'\gG\gb
\end{equation*}
and $K_\gl(\cdot)$ is the modified Bessel-function of the second kind with $\gl$ degrees of freedom. The characteristic function of $GH_N(\lambda,\alpha,\beta,\delta,\mu,\Gamma)$  is given by
\begin{equation}\label{eq:gh_cf}
\begin{split}
\phi_{GH_N}(u; \gl,\ga,\gb,\gd,\mu,\gG)&:=\exp(iu'\mu)\Big(\frac{\ga^2-\gb'\gG\gb}{\ga^2-(iu+\gb)'\gG(iu+\gb)}\Big)^{\gl/2}\\
&\quad\cdot\frac{K_\gl(\gd(\ga^2-(iu+\gb)'\gG(iu+\gb))^{1/2})}{K_\gl(\gd\gg)},
\end{split}
\end{equation}
where $A'$ denotes the transpose of a matrix or vector $A$. 
For simplicity, we assume that the condition 
\begin{equation}\label{eq:ghcond}
\alpha^2>\beta'\Gamma\beta
\end{equation}
is satisfied\footnote{If $\alpha^2=\beta'\Gamma\beta$ and $\gl<0$, the distribution is still well-defined, but one has to consider the limit $\gg\to0^+$ in the Bessel functions, see~\cite{B81,R97}.}. If $N=1$, clearly, $\gG=1$ is the only possible choice for the "matrix parameter" $\gG$, thus we omit it in this case and denote the one-dimensional GH distribution by $GH(\lambda,\alpha,\beta,\delta,\mu)$. 
Barndorff--Nielsen obtains the GH distribution in \cite{BN78} as a normal variance-mean mixture of a $N$-dimensional normal distribution and a (one-dimensional) generalized inverse Gaussian (GIG) distribution with density function
\begin{equation*}
f^{GIG}(x;a,b,p)=\frac{(b/a)^p}{2K_p(ab)}x^{p-1}\exp(-\frac{1}{2}(a^2x^{-1}+b^2x)),\quad x>0
\end{equation*}
and parameters $a,b>0$ and $p\in\R$\footnote{The notation of the GIG distribution varies throughout the literature, we use the notation from~\cite{S03}.}.
To be more precise: Let $w^N(1)$ be a $N$-dimensional standard normally distributed random vector, $\gG$ a spd $N\times N$-structure matrix with unit determinant and $\ell^{GIG}(1)$ a $GIG(a,b,p)$ random variable, which is independent of $w^N(1)$. 
For $\mu,\beta\in\R^N$, we set $\gd=a$, $\gl=p$, $\ga=\sqrt{b^2+\gb'\gG\gb}$ and define the random variable $\ell^{GH_N}(1)$ as
\begin{equation}\label{eq:mean-variance}
\ell^{GH_N}(1):=\mu+\gG\gb \ell^{GIG}(1) + \sqrt{\ell^{GIG}(1)\gG}w^N(1).
\end{equation}
Then $\ell^{GH_N}(1)$ is $GH_N(\lambda,\alpha,\beta,\delta,\mu,\Gamma)$-distributed, where $\sqrt{\gG}$ denotes the Cholesky decomposition of the matrix $\gG$. 
With this in mind, one can draw samples of a GH distribution with given parameters by sampling multivariate normal and GIG-distributed random variables, 
as $a=\gd>0$ and $b=\sqrt{\ga^2-\gb'\gG\gb}>0$ is guaranteed by the conditions on the GIG parameters (this results in Eq.~\eqref{eq:ghcond} being fulfilled).

As noted in~\cite[Section 5]{E01}, for general $\gl\in\R$, we cannot assume that the increments of the GH L\'evy process (resp. of the subordinating process) over a time length other than one follow a GH distribution (resp. GIG distribution).
If $N=1$, however, the (one-dimensional) GH L\'evy process $\ell^{GH}$ has the representation 
\begin{equation}
\ell^{GH}(t)\stackrel{\cL}{=}\mu t+\beta \ell^{GIG}(t)+w(\ell^{GIG}(t)),\quad\text{for } t\geq 0,
\end{equation}
where $w$ is a one-dimensional Brownian motion and $\ell^{GIG}$ a GIG process independent of $w$ (see~\cite{CS02}). 
This result yields the following generalization:

\begin{lem}\label{lem:sub}
For $N\in\N$, the $N$-dimensional process $\ell^{GH_N}=(\ell^{GH_N}(t),t\in~ \T)$, which is $GH_N(\lambda,\alpha,\beta,\delta,\mu,\Gamma)$-distributed, can be represented as a subordinated $N$-dimensional Brownian motion $w^N$ via
\begin{equation}\label{sub1}
\ell^{GH_N}(t)\stackrel{\cL}{=}\mu t+\gG\beta \ell^{GIG}(t)+\sqrt{\gG}w^N(\ell^{GIG}(t)),
\end{equation}
where $(\ell^{GIG}(t),t\in\T)$ is a GIG L\'evy process independent of $w^N$ and $\sqrt{\gG}$ is the Cholesky decomposition of $\gG$. 
\end{lem}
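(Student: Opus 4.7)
The plan is to show that the two processes agree in law by comparing characteristic functions. The right-hand side of~\eqref{sub1} defines a L\'evy process: the drifted Brownian motion $(\gG\gb s + \sqrt{\gG}w^N(s))_{s\geq 0}$ is a multi-dimensional L\'evy process, its subordination by the independent subordinator $\ell^{GIG}$ is again a L\'evy process by the standard subordination theorem, and adding the linear drift $\mu t$ preserves this property. Since the left-hand side is a L\'evy process by assumption, it is enough to match the characteristic functions at a single fixed $t\in\T$.

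I would condition on $\ell^{GIG}(t)=s$. Because $w^N$ is independent of $\ell^{GIG}$ and $\sqrt{\gG}(\sqrt{\gG})'=\gG$, the conditional law of the right-hand side is $N(\mu t + \gG\gb s,\, s\gG)$. Writing $\tau(u):=\tfrac12 u'\gG u - iu'\gG\gb$ (which has $\mathrm{Re}\,\tau(u)\geq 0$), its conditional characteristic function at $u\in\R^N$ equals $\exp(itu'\mu)\exp(-s\tau(u))$. Averaging over $\ell^{GIG}(t)$ and using that $\ell^{GIG}$ is a L\'evy process whose Laplace exponent extends analytically to $\{\mathrm{Re}\,z\geq 0\}$, the unconditional characteristic function of the right-hand side becomes
\begin{equation*}
\exp(itu'\mu)\,\E\bigl[\exp(-\tau(u)\,\ell^{GIG}(1))\bigr]^t.
\end{equation*}
Substituting the classical Laplace transform of the $GIG(\gd,\gamma,\gl)$ distribution (with $\gamma^2 = \ga^2 - \gb'\gG\gb$) and using the algebraic identity
\begin{equation*}
\gamma^2 + 2\tau(u) = \ga^2 - (iu+\gb)'\gG(iu+\gb),
\end{equation*}
which follows from expanding the quadratic form together with the symmetry of $\gG$, reproduces $\phi_{GH_N}(u;\gl,\ga,\gb,\gd,\mu,\gG)^t$ from~\eqref{eq:gh_cf} factor by factor.

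The main obstacle is the algebraic bookkeeping that identifies the Bessel and power factors of the GIG Laplace transform with those of the GH characteristic function. Once the identity for $\gamma^2+2\tau(u)$ is recorded, the ratio $\bigl(K_\gl(\gd\sqrt{\gamma^2+2\tau(u)})/K_\gl(\gd\gamma)\bigr)^t$ aligns with the Bessel factor of $\phi_{GH_N}$ immediately, and the power $\bigl(\gamma/\sqrt{\gamma^2+2\tau(u)}\bigr)^{t\gl}$ collapses to $\bigl((\ga^2-\gb'\gG\gb)/(\ga^2-(iu+\gb)'\gG(iu+\gb))\bigr)^{t\gl/2}$, as required by~\eqref{eq:gh_cf}. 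A secondary technical point is the analytic extension of the GIG Laplace transform to complex arguments with non-negative real part, which is justified by the general theory of subordinators and the fact that $\mathrm{Re}\,\tau(u)=\tfrac12 u'\gG u\geq 0$ since $\gG$ is positive definite.
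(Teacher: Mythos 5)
Your proposal is correct and follows essentially the same route as the paper: both identify the law of the subordinated Brownian motion through its characteristic function, which is the GIG moment generating function (Laplace transform) evaluated at $iu'\gG\gb-\tfrac12 u'\gG u$, raised to the power $t$ by infinite divisibility, and matched with $(\phi_{GH_N})^t$. The only difference is cosmetic: you compute the mixture characteristic function explicitly (conditioning on $\ell^{GIG}(t)$ and verifying the identity $\gamma^2+2\tau(u)=\ga^2-(iu+\gb)'\gG(iu+\gb)$ against Eq.~\eqref{eq:gh_cf}), whereas the paper cites this normal variance--mean mixture formula and argues from the GH side toward the subordinated representation.
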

\begin{proof} Since the $GH_N(\lambda,\alpha,\beta,\delta,\mu,\Gamma)$ distribution may be represented as a normal variance-mean mixture (see Eq.~\eqref{eq:mean-variance}), we have, that
$$\ell^{GH_N}(1)\stackrel{\cL}{=}\mu+\gG\gb \ell^{GIG}(1) + \sqrt{\gG \ell^{GIG}(1)}w^N(1)\stackrel{\cL}{=}\mu+\gG\gb \ell^{GIG}(1)+ \sqrt{\gG}w^N(\ell^{GIG}(1))$$
where $\ell^{GIG}(1)\sim GIG(\delta,\sqrt{\ga^2-\gb'\gG\gb},\gl)$ and $w^N$ is a $N$-dimensional Brownian motion independent of $\ell^{GIG}(1)$. 
The characteristic function of the mixed density is then given by  
\begin{equation*}
\phi_{GH_N}(u;\gl,\alpha,\beta,\delta,\mu,\Gamma)=\exp(iu'\mu)\cM_{GIG}(iu' \gG\gb-\frac{1}{2}u'\gG u;\delta,\sqrt{\ga^2-\gb'\gG\gb},\gl),
\end{equation*}
where $\cM_{GIG}$ denotes the moment generating function of $\ell^{GIG}(1)$ (see~\cite{BNJS82}). 
The GIG distribution is infinitely divisible, thus this GIG L\'evy process $\ell^{GIG}=(\ell^{GIG}(t),t\in\T)$ can be defined via its characteristic function for $t\in\T$:
$$\E(\exp(iu\ell^{GIG}(t)))=(\cM_{GIG}(iu;\delta,\sqrt{\ga^2-\gb'\gG\gb},\gl))^t.$$
The infinite divisibility yields further  
\begin{equation*}
\begin{split}
\E\big(\exp(iu'\ell^{GH_N}(t))\big)&=\E\big(\exp(iu'\ell^{GH_N}(1))\big)^t=(\phi_{GH}(u;\gl,\alpha,\beta,\delta,\mu,\Gamma))^t\\
&=\exp(iu'(\mu t))(\cM_{GIG}(iu' \gG\gb-\frac{1}{2}u'\gG u;\delta,\sqrt{\ga^2-\gb'\gG\gb},\gl))^t.
\end{split}
\end{equation*}
The expression above is the characteristic function of another normal variance-mean mixture, namely where the subordinator $\ell^{GIG}$ is a GIG process with characteristic function
$$\E(\exp(iu\ell^{GIG}(t)))=(\cM_{GIG}(iu;\delta,\sqrt{\ga^2-\gb'\gG\gb},\gl))^t.$$
Hence, $\ell^{GH_N}(t)$ can be expressed as 
\begin{equation*}
\ell^{GH_N}(t)\stackrel{\cL}{=}\mu t+\gG\beta\ell^{GIG}(t)+\sqrt\gG w^N(\ell^{GIG}(t)).
\end{equation*}
\end{proof}
\begin{rem}\label{rem:convolution}
In the special case of $\lambda=-\frac{1}{2}$ one obtains the normal inverse Gaussian (NIG) distribution. The mixing density is, in this case, the inverse Gaussian (IG) distribution. We denote the $N$-dimensional NIG distribution by $NIG_N(\alpha,\beta,\delta,\mu,\Gamma)$. This is the only subclass of GH distributions which is closed under convolutions in the sense that (see~\cite{OHH04})
$$NIG_N(\alpha,\beta,\delta_1,\mu_1,\Gamma)*NIG_N(\alpha,\beta,\delta_2,\mu_2,\Gamma)=NIG_N(\alpha,\beta,\delta_1+\delta_2,\mu_1+\mu_2,\Gamma).$$
For $\gl\in\R$, the sum of independent GH random variables is in general not GH-distributed.
This implies further, that one is in general not able to derive bridge laws of these processes in closed form, meaning we need to use the algorithms introduced in Section~\ref{ssec:Fourierinversion} for simulation.
\end{rem}

As shown in~\cite{BNH77}, the GH and the GIG distribution are infinitely-divisible, thus we can define the $N$-dimensional GH L\'evy process $\ell^{GH_N}=(\ell^{GH_N}(t),t\in\T)$ with characteristic function
$$\E(\exp(iu\ell^{GH_N}(t))=(\phi_{GH_N}(u;\lambda,\alpha,\beta,\delta,\mu,\Gamma))^t.$$
\begin{rem} \label{rem:NIG}
If $\gl=-\frac{1}{2}$, the corresponding NIG L\'evy process $(\ell^{NIG_N}(t),t\in\T)$ has characteristic function 
$$\E[\exp(iu\ell^{NIG_N}(t))]=(\phi_{GH_N}(u;-\frac{1}{2},\alpha,\beta,\delta,\mu,\Gamma))^t=\phi_{GH_N}(u;-\frac{1}{2},\alpha,\beta,t\delta,t\mu,\Gamma).$$
This is due to the fact that the characteristic function $\phi_{IG}(u;a,b)$ of the mixing IG distribution fulfills the identity 
\begin{equation*}
(\phi_{IG}(u;a,b))^t=\phi_{IG}(u;ta,b)
\end{equation*}
for any $t\in\T$ and $a,b>0$ (see \cite{S03}).
\end{rem}

We consider the finite time horizon $\T = [0,T]$, for $T<+\infty$, the probability space $(\gO,(\cA_t,t\geq 0),\mathbb{P})$, and a compact domain $\mathcal D\subset\R^s$ for $s\in\N$ to define a GH L\'evy field as a mapping
\begin{equation*}
L^{GH}:\gO\times \D\times\T\to\R,\quad(\go,x,t)\mapsto L^{GH}(\go)(x)(t),  
\end{equation*} 
such that for each $x\in \D$ the point-wise marginal process
\begin{equation*}
L^{GH}(\cdot)(x)(\cdot):\gO\times\T\to\R,\quad (\go,t)\mapsto L^{GH}(\go)(x)(t),  
\end{equation*} 
is a one-dimensional GH L\'evy process on $(\gO,(\cA_t,t\geq 0),\mathbb{P})$ with characteristic function
\begin{equation*}
\E\big(\exp(iuL^{GH}(x)(t))\big)=(\phi_{GH}(u;\gl(x),\alpha(x),\beta(x),\delta(x),\mu(x)))^t,
\end{equation*}
where the indicated parameters are given by continuous functions, i.e. $\gl,\gb,\mu\in C(\D;\R)$ and $\ga,\gd\in C(\D;\R_{>0})$. We assume that condition \eqref{eq:ghcond}, i.e. $\ga(x)^2>\gb(x)^2$, is fulfilled for any $x\in\D$  to ensure that $L^{GH}(x)(\cdot)$ is a well-defined GH L\'evy process.
This, in turn, means that $L^{GH}$ takes values in the Hilbert space $H=L^2(\mathcal D)$ and is square integrable as 
\begin{equation*} 
 \E(||L^{GH}(t)||_H^2)\le T\E(||L^{GH}(1)||_H^2)\le T \max\limits_{x\in\mathcal D}\E(L^{GH}(x)(1)^2)V_{\mathcal D},
\end{equation*}
where $V_{\mathcal D}$ denotes the volume of $\mathcal D$. 
The right hand side is finite since every GH distribution has finite variance (see for example~\cite{M04, S03}), the parameters of the distribution of $L^{GH}(x)(1)$ depend continuously on $x$ and $\mathcal D\subset\R^s$ is compact by assumption.
We use the Karhunen-Lo\`eve expansion from Section~\ref{sec:pre} to obtain an approximation of a given GH L\'evy field. 
For this purpose, we consider the truncated sum 
\begin{equation*}
 L^{GH}_N(x)(t) :=\sum_{i=1}^{N} \varphi_i(x)\ell_i^{GH}(t)\stackrel{\mathcal L}{=} \sum_{i=1}^{N} \varphi_i(x) \Big(\mu_i t+\beta_i \ell^{GIG}_i(t)+w_i(\ell^{GIG}_i(t))\Big),
\end{equation*}
where $N\in\N$ and $\varphi_i(x)=\sqrt{\rho_i}e_i(x)$ is the $i$-th component of the spectral basis evaluated at the spatial point $x$. 
For each $i,=1,\dots,N$, the processes $\ell_i^{GH}:=(\ell_i^{GH}(t), t\in\T)$ are uncorrelated but dependent $GH(\gl_i,\ga_i,\gb_i,\gd_i,\mu_i)$ L\'evy process.
From Theorem~\ref{thm:H_error} follows that $L^{GH}_N$ converges in $L^2(\gO;H)$ to $L^{GH}$ as $N\to\infty$. 
With given $\mu_i, \beta_i\in\R$, we have that 
	\begin{equation}\label{eq:Z_i}
	\ell^{GH}_i(t)\stackrel{\mathcal L}{=}\mu_i t+\beta_i \ell^{GIG}_i(t)+w_i(\ell^{GIG}_i(t)),
	\end{equation}
where for each $i$, the process $(\ell^{GIG}_i(t), t\in\T)$ is a GIG L\'evy process with parameters $a_i=\gd_i, b_i=(\ga_i^2-\gb_i^2)^{1/2}>0$ and $p_i=\gl_i\in\R$.
In addition, $(w_i(t), t\in\T)$ is a one-dimensional Brownian motion independent of $\ell^{GIG}_i$ and  all Brownian motions $w_1,\dots,w_N$ are mutually independent of each other, but the processes $\ell^{GIG}_1,\dots,\ell^{GIG}_N$ may be correlated.
We aim for an approximation $(L^{GH}_N(x)(t),t\in\T)$ which is a GH process for arbitrary $\varphi_i$ and $x\in\mathcal D$.
Remark~\ref{rem:convolution} suggests that this cannot be achieved by the summation of independent $\ell_i^{GH}$, but rather by using correlated subordinators $\ell^{GIG}_1,\dots,\ell^{GIG}_N$. 
Before we determine the correlation structure of the subordinators, we establish a necessary and sufficient condition on the $\ell_i^{GH}$ to achieve the desired distribution of the approximation. 

\begin{lem} \label{lem:GH_lin}
Let $N\in\N$, $t\in\T$ and $(\ell_i^{GH},i=1\dots,N)$ be GH processes as defined in Eq.~\eqref{eq:Z_i}. For a vector ${\bf a} = (a_1,\dots,a_N)$ with arbitrary numbers $a_1,\dots,a_N\in\R\setminus\{0\}$, the process $\ell^{GH,{\bf a}}$ defined by
\begin{equation}\label{eq:LNGH}
\ell^{GH,\bf a}(t):=\sum_{i=1}^{N} a_i\ell_i^{GH}(t)=\sum_{i=1}^{N} a_i(\mu_i+\beta_i\ell_i^{GIG}(t)+w_i(\ell_i^{GIG}(t)))
\end{equation}
is a one-dimensional GH process, if and only if the vector $\ell^{GH_N}(1):=(\ell_1^{GH}(1),\dots,\ell_N^{GH}(1))'$ is multivariate $GH_N(\gl^{(N)},\alpha^{(N)},\beta^{(N)},\delta^{(N)},\mu^{(N)},\Gamma)$-distributed with parameters $\gl^{(N)},\ga^{(N)}$, $\gd^{(N)}\in\R$, $\gb^{(N)},\mu^{(N)}\in\R^N$
 and structure matrix $\gG\in\R^{N\times N}$.
\end{lem}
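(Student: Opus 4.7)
The plan is to treat the two implications separately, leveraging the subordinated Brownian-motion representation of Lemma~\ref{lem:sub} throughout.

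For the sufficiency ($\Leftarrow$), I would start from the multivariate GH representation provided by Lemma~\ref{lem:sub}, namely
\[\ell^{GH_N}(t)\stackrel{\cL}{=}\mu^{(N)}t+\gG\gb^{(N)}\ell^{GIG}(t)+\sqrt{\gG}w^N(\ell^{GIG}(t)),\]
in which a \emph{single} one-dimensional GIG subordinator $\ell^{GIG}$ drives all $N$ coordinates jointly with an independent $N$-dimensional Brownian motion $w^N$. Applying $\mathbf{a}'$ and observing that $(\mathbf{a}'\sqrt{\gG})w^N(s)\stackrel{\cL}{=}\sqrt{\mathbf{a}'\gG\mathbf{a}}\,\widetilde w(s)$ for a scalar standard Brownian motion $\widetilde w$ (independent of $\ell^{GIG}$, since $w^N$ is), one obtains
\[\mathbf{a}'\ell^{GH_N}(t)\stackrel{\cL}{=}(\mathbf{a}'\mu^{(N)})t+(\mathbf{a}'\gG\gb^{(N)})\ell^{GIG}(t)+\sqrt{\mathbf{a}'\gG\mathbf{a}}\,\widetilde w(\ell^{GIG}(t)).\]
Using the scaling identity $cY\sim GIG(a\sqrt c,b/\sqrt c,p)$ for $Y\sim GIG(a,b,p)$, this can be rewritten in the canonical one-dimensional subordinated-BM form. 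Matching with~\eqref{eq:gh_cf} then identifies the parameters, for instance $\bar\gl=\gl^{(N)}$, $\bar\mu=\mathbf{a}'\mu^{(N)}$, $\bar\gd=\gd^{(N)}\sqrt{\mathbf{a}'\gG\mathbf{a}}$, $\bar\gb=(\mathbf{a}'\gG\gb^{(N)})/(\mathbf{a}'\gG\mathbf{a})$ and $\bar\ga^2-\bar\gb^2=((\ga^{(N)})^2-(\gb^{(N)})'\gG\gb^{(N)})/(\mathbf{a}'\gG\mathbf{a})$. This direction is essentially bookkeeping.

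For the necessity ($\Rightarrow$), assume $\ell^{GH,\mathbf{a}}$ is a one-dimensional GH L\'evy process for every admissible $\mathbf{a}$. Using the structure~\eqref{eq:Z_i} and conditioning on the individual subordinators (recall the $w_i$ are mutually independent and independent of all $\ell_j^{GIG}$), the characteristic function of $\ell^{GH,\mathbf{a}}(1)$ equals
\[\exp(iu\mathbf{a}'\mu)\,\E\Big[\exp\Big(\sum_{i=1}^N\big(iua_i\gb_i-\tfrac{u^2}{2}a_i^2\big)\ell_i^{GIG}(1)\Big)\Big],\]
that is, the joint moment generating function of $(\ell_1^{GIG}(1),\dots,\ell_N^{GIG}(1))$ evaluated at $z_i(u)=iua_i\gb_i-\tfrac{u^2}{2}a_i^2$. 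On the other hand, the GH hypothesis forces this quantity to agree with $\exp(iu\bar\mu)\cM_{GIG}(iu\bar\gb-u^2/2;\bar\gd,\sqrt{\bar\ga^2-\bar\gb^2},\bar\gl)$ with parameters depending on $\mathbf{a}$. Letting $\mathbf{a}$ and $u$ vary pins down the joint MGF of $(\ell_i^{GIG}(1))_i$ on a set with non-empty interior in $\mathbb C^N$ and, by analyticity, everywhere; the only joint law compatible with all these identities is one in which the $\ell_i^{GIG}$ are deterministic scalings of a single common GIG process. Re-inserted into~\eqref{eq:Z_i}, this is precisely the subordinated-BM form of Lemma~\ref{lem:sub}, with the scalings absorbed into a suitable structure matrix $\gG$.

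The main obstacle is the necessity direction: the sufficiency is a short consequence of Lemma~\ref{lem:sub} together with the GIG scaling identity, whereas ($\Rightarrow$) requires reconstructing the full multivariate GH structure from a one-parameter family of one-dimensional GH characteristic functions indexed by $\mathbf{a}$. A shorter alternative, avoiding the explicit MGF-matching, is to invoke Bl\ae sild's classical characterization of multivariate generalized hyperbolic laws, which states that a nondegenerate random vector is $GH_N$-distributed if and only if every linear combination with all-nonzero coefficients is univariate GH; this yields ($\Rightarrow$) at once.
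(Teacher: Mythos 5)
Your sufficiency direction is essentially sound and, up to presentation, parallels the paper: where you push $\mathbf{a}$ through the subordinated representation of Lemma~\ref{lem:sub} and use the GIG scaling identity, the paper simply observes that $\ell^{GH,\mathbf{a}}(1)$ is the first entry of $A\ell^{GH_N}(1)$ for an invertible matrix $A$ with first row $\mathbf{a}$ and invokes closure of the multivariate GH class under regular affine transformations together with the marginal formulas (\cite[Theorem 1]{B81}; cf.\ Lemmas~\ref{lem:GH_N} and~\ref{lem:KL_GH}); your parameter bookkeeping agrees with Lemma~\ref{lem:KL_GH}.

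The necessity direction, however, contains a genuine gap. First, the analyticity argument does not work as stated: the arguments $z_i(u,\mathbf{a})=iua_i\gb_i-\tfrac{u^2}{2}a_i^2$ depend on only $N+1$ real parameters, so for $N\ge2$ they sweep out a subset of $\mathbb{C}^N$ of real dimension at most $N+1<2N$, which has empty interior; moreover, the right-hand side of your identity involves the unknown, $\mathbf{a}$-dependent parameters $\bar\gl,\bar\ga,\bar\gb,\bar\gd$, so these identities do not equate the joint moment generating function of $(\ell_1^{GIG}(1),\dots,\ell_N^{GIG}(1))$ with any explicitly known function in the first place. Second, the decisive step --- that ``the only joint law compatible with all these identities is one in which the $\ell_i^{GIG}$ are deterministic scalings of a single common GIG process'' --- is precisely the content of the necessity claim and is asserted rather than proved; it is also stronger than what the lemma states, which concerns only the law of $\ell^{GH_N}(1)$ and not an almost-sure common-subordinator structure. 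Third, the proposed shortcut misquotes the literature: \cite[Theorem 1]{B81} provides only the forward closure properties (regular affine images, marginals and conditionals of a multivariate GH law are again GH); it contains no Cram\'er--Wold-type converse asserting that univariate GH-ness of all nonzero-coefficient linear combinations implies joint GH-ness, and if such a ``classical characterization'' existed the lemma would be an immediate corollary and the paper would not need an argument at all. For comparison, the paper proves necessity by contraposition: assuming $\ell^{GH_N}(1)$ admits no $N$-dimensional normal variance--mean mixture representation with a GIG mixing variable, it argues that the first component of $A\ell^{GH_N}(1)$ cannot be written as a scalar GIG mixture either (after normalizing by $\sqrt{A_{[1]}\gG A_{[1]}'}$), contradicting the assumed GH-ness of $\ell^{GH,\mathbf{a}}(1)$ together with closure of the GH class under regular affine transformations. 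To repair your proof you would either have to justify the reconstruction of the joint law from the one-parameter family of characteristic-function identities, or follow the paper's mixture-representation contradiction.
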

The entries of the coefficient vector ${\bf a}$ in $\ell^{GH,\bf a}$ are later identified with the basis functions $\varphi_i(x)$ for $x\in\D$ to show that $L^{GH}_N(x)(\cdot)$ is a one-dimensional L\'evy process and the approximation $L^{GH}_N$ a $H$-valued GH L\'evy field.
\begin{proof}[Proof of Lemma~\ref{lem:GH_lin}] We first consider the case that 
	\begin{equation*}
	\ell^{GH_N}(1)\sim GH_N(\lambda^{(N)},\alpha^{(N)},\beta^{(N)},\delta^{(N)},\mu^{(N)},\Gamma).
	\end{equation*}
It is sufficient to show that $\ell^{GH,\bf a}(1)$ is a GH-distributed random variable, the infinite divisibility of the GH distribution then implies that $(\ell^{GH,\bf a}(t),t\in\T)$ is a GH process. 
Since the entries of the coefficient vector $a_1,\dots,a_N$ are non-zero, there exists a non-singular $N\times N$ matrix $A$, such that $\ell^{GH,\bf a}(1)$ is the first component of the vector $A\ell^{GH_N}(1)$. 
If $\ell^{GH_N}(1)$ is multi-dimensional GH-distributed, then follows from~\cite[Theorem 1]{B81}, that $A\ell^{GH_N}(1)$ is also multi-dimensional GH-distributed and that the first component of $A\ell^{GH_N}(1)$, namely $\ell^{GH,\bf a}(1)$, follows a one-dimensional GH distribution (the parameters of the distribution of $\ell^{GH,\bf a}(1)$ depend on $A$ and on $\lambda^{(N)},\alpha^{(N)},\beta^{(N)},\delta^{(N)},\mu^{(N)},\Gamma$ and are explicitly given in \cite{B81} and below). \\
On the other hand, assume that $\ell^{GH,\bf a}(1)$ is a GH random variable (with arbitrary coefficients), but $\ell^{GH_N}(1)$ is not $N$-dimensional GH-distributed. 
This means there is no representation of $\ell^{GH_N}(1)$ such that
\begin{equation*}
\ell^{GH_N}(1)\stackrel{\cL}{=}\mu+\gG\gb\ \ell^{GIG}(1)+\sqrt{\gG}w^N(\ell^{GIG}(1))\
\end{equation*}  
with $\mu,\gb\in\R^N$, $\gG\in\R^{N\times N}$ spd with determinant one, a GIG random variable $\ell^{GIG}(1)$ and a $N$-dimensional Brownian motion $w^N$ independent of $\ell^{GIG}(1)$. 
This implies that $\ell^{GH,\bf a}(1)=(A\ell^{GH_N}(1))_1$ has no representation 
\begin{equation*}
\begin{split}
\ell^{GH,\bf a}(1)&=(A\mu)_1+(A\gG\gb)_1\ell^{GIG}(1)+(A\sqrt{\gG}w^N(\ell^{GIG}(1)))_1\\
&\stackrel{\cL}{=}(A\mu)_1+(A\gG\gb)_1\ell^{GIG}(1)+\sqrt{\ell^{GIG}(1)A_{[1]}\gG A_{[1]}'}w^1(1),
\end{split}
\end{equation*}  
where $A_{[1]}$ denotes the first row of the matrix $A$ and $w^1(1)\sim\cN(0,1)$. 
For the last equality we have used the affine linear transformation property of multi-dimensional normal distributions and that $\gG$ is positive definite. 
Since $c_A:=A_{[1]}\gG A_{[1]}'>0$, we can divide the equation above by $\sqrt{c_A}$ and obtain that $c_A^{-1/2}\,\ell^{GH,\bf a}(1)$ cannot be a GH-distributed random variable, as it cannot be expressed as a normal variance-mean mixture with a GIG-distribution. 
But this is a contradiction, since $\ell^{GH,\bf a}(1)$ is GH-distributed by assumption and the class of GH distributions is closed under regular affine linear transformations (see~\cite[Theorem 1c]{B81}).
\end{proof} 
\begin{rem}\label{rem:coefficients}
The condition $a_i\neq0$ is, in fact, not necessary in Lemma~\ref{lem:GH_lin}. If, for $k\in\{1,\dots,N-1\}$, $k$ coefficients $a_{i_1}=\dots=a_{i_k}=0$, then the summation reduces to 
\begin{equation*}
\ell^{GH,\bf a}(t)=\sum_{i=1}^{N}a_i \ell_i^{GH}(t)=\sum_{l=1}^{N-k}a_{j_l}\ell^{GH}_{j_l}(t),
\end{equation*}
where the indices $j_l$ are chosen such that $a_{j_l}\neq0$ for $l=1,\dots,N-k$. If $P\in\R^{N\times N}$ is the permutation matrix with 
\begin{equation*}
P\ell^{GH_N}(1)=P(\ell_1^{GH}(1),\dots,\ell_N^{GH}(1))'=( \ell^{GH}_{j_1}(1),\dots, \ell^{GH}_{j_{N-k}}(1), \ell^{GH}_{i_1}(1),\dots, \ell^{GH}_{i_k}(1))', 
\end{equation*}
then $P\ell^{GH_N}$ is again $N$-dimensionally GH-distributed and by~\cite[Theorem 1a]{B81} the vector $(\ell^{GH}_{j_1}(1),\dots,\ell^{GH}_{j_{N-k}}(1))$ admits a $(N-k)$-dimensional GH law. Thus, we only consider the case where all coefficients are non-vanishing.
\end{rem}

The previous proposition states that the KL approximation
\begin{equation*}
 L^{GH}_N(x)(t) = \sum_{i=1}^{N} \varphi_i(x) \ell_i^{GH}(t),
\end{equation*}
can only be a GH process for arbitrary $(\varphi_i(x),i=1,\dots,N)$ if the $\ell_i^{GH}$ are correlated in such a way that they form a multi-dimensional GH process. This rules out the possibility of independent processes $(\ell_i^{GH},i=1,\ldots,N)$, because if $\ell^{GH_N}(1)$ is multi-dimensional GH-distributed, it is not possible that the marginals $\ell_i^{GH}(1)$ are independent GH-distributed random variables (see \cite{B81}).
The parameters $\gl_i,\ga_i,\gb_i,\gd_i,\mu_i$ of each process $\ell_i^{GH}$ should remain as unrestricted as possible, so we determine in the next step the parameters of the marginals of a $GH_N(\lambda^{(N)},\alpha^{(N)},\beta^{(N)},\delta^{(N)},\mu^{(N)},\Gamma)$ distribution and show how the subordinators $(\ell^{GIG}_i,i=1,\ldots,N)$ might be correlated. For this purpose, we introduce the notation $A\invtr :=(A^{-1})'$ if $A$ is an invertible square matrix. The following result allows us to determine the marginal distributions of a $N$-dimensional GH distribution. 

\begin{lem}(Masuda~\cite{M04}, who refers to \cite{BJ81}, Lemma A.1.)\label{lem:GH_N}
Let 
$$\ell^{GH_N}(1)=(\ell_1^{GH}(1),\dots,\ell_N^{GH}(1))'\sim GH_N(\lambda^{(N)},\alpha^{(N)},\beta^{(N)},\delta^{(N)},\mu^{(N)},\Gamma),$$ 
then for each $i$ we have that $\ell_i^{GH}(1)\sim GH(\lambda_i,\alpha_i,\beta_i,\delta_i,\mu_i)$, where 
\begin{equation*}
\begin{split}
&\lambda_i=\gl^{(N)},\quad\alpha_i=\gG_{ii}^{-1/2}\left[(\ga^{(N)})^2-\gb_{-i}'\left(\gG_{-i,22}
-\gG_{-i,21}\gG_{ii}^{-1}\gG_{-i,12}\right)\gb_{-i}\right]^{1/2}\\
&\beta_i=\gb^{(N)}_i+\gG_{ii}^{-1}\gG_{-i,12}\gb_{-i},\quad\delta_i=\sqrt{\gG_{ii}}\gd^{(N)}_i,\quad\mu_i=\mu_i^{(N)},
\end{split}
\end{equation*} 
together with
\begin{equation*}
\begin{split}
&\gb_{-i}:=(\gb_1^{(N)},\dots,\gb_{i-1}^{(N)},\gb_{i+1}^{(N)},\dots,\gb_N^{(N)})',\\
&\gG_{-i,12}:=(\gG_{i,1},\dots,\gG_{i,i-1},\gG_{i,i+1},\dots,\gG_{i,N}),\quad \gG_{-i,21}:=\gG_{-i,12}'
\end{split}
\end{equation*} 
and $\gG_{-i,22}$ denotes the $(N-1)\times(N-1)$ matrix which is obtained by removing the $i$-th row and column of $\gG$.
\end{lem}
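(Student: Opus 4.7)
The plan is to reduce the claim to a direct computation via the normal variance-mean mixture representation \eqref{eq:mean-variance}. Since $\ell^{GH_N}(1) \stackrel{\cL}{=} \mu^{(N)} + \Gamma\beta^{(N)} Z + \sqrt{Z}\,\sqrt{\Gamma}\,\xi$, where $Z \sim GIG(\delta^{(N)},\gamma^{(N)},\lambda^{(N)})$ with $\gamma^{(N)} := \sqrt{(\alpha^{(N)})^2 - (\beta^{(N)})'\Gamma\beta^{(N)}}$ is independent of the $N$-dimensional standard normal vector $\xi$, projecting onto the $i$-th coordinate yields
\begin{equation*}
\ell_i^{GH}(1) \stackrel{\cL}{=} \mu_i^{(N)} + (\Gamma\beta^{(N)})_i Z + \sqrt{Z}\,\bigl(\sqrt{\Gamma}\bigr)_{[i]} \xi,
\end{equation*}
where $(\sqrt{\Gamma})_{[i]}$ denotes the $i$-th row of a square root of $\Gamma$. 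Because $(\sqrt{\Gamma})_{[i]}\bigl(\sqrt{\Gamma}\bigr)_{[i]}' = \Gamma_{ii}$, the rightmost term is, conditional on $Z$, Gaussian with mean zero and variance $Z\,\Gamma_{ii}$. I may therefore rewrite
\begin{equation*}
\ell_i^{GH}(1) \stackrel{\cL}{=} \mu_i^{(N)} + (\Gamma\beta^{(N)})_i Z + \sqrt{Z\,\Gamma_{ii}}\,\eta
\end{equation*}
for a scalar standard normal $\eta$ independent of $Z$.

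The second step is to recast this as a scalar normal variance-mean mixture by an appropriate rescaling of the subordinator. Setting $\widetilde Z := \Gamma_{ii} Z$ and $\beta_i := (\Gamma\beta^{(N)})_i/\Gamma_{ii}$ gives
\begin{equation*}
\ell_i^{GH}(1) \stackrel{\cL}{=} \mu_i^{(N)} + \beta_i \widetilde Z + \sqrt{\widetilde Z}\,\eta,
\end{equation*}
which is exactly the variance-mean mixture representation of a one-dimensional GH law. A direct change of variables in the GIG density shows that for $c>0$ one has $cZ \sim GIG(\sqrt{c}\,\delta^{(N)},\gamma^{(N)}/\sqrt{c},\lambda^{(N)})$, whence $\widetilde Z \sim GIG(\sqrt{\Gamma_{ii}}\,\delta^{(N)},\,\gamma^{(N)}/\sqrt{\Gamma_{ii}},\,\lambda^{(N)})$. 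Reading off the parameters from the mixture, $\ell_i^{GH}(1) \sim GH(\lambda_i,\alpha_i,\beta_i,\delta_i,\mu_i)$ with $\lambda_i = \lambda^{(N)}$, $\mu_i = \mu_i^{(N)}$, $\delta_i = \sqrt{\Gamma_{ii}}\,\delta^{(N)}$, and $\alpha_i^2 - \beta_i^2 = (\gamma^{(N)})^2/\Gamma_{ii}$.

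It then remains to translate these compact identities into the block-matrix formulas of the statement. The splitting $(\Gamma\beta^{(N)})_i = \Gamma_{ii}\beta_i^{(N)} + \Gamma_{-i,12}\beta_{-i}$ immediately yields $\beta_i = \beta_i^{(N)} + \Gamma_{ii}^{-1}\Gamma_{-i,12}\beta_{-i}$. For the parameter $\alpha_i$, I would expand
\begin{equation*}
(\beta^{(N)})'\Gamma\beta^{(N)} = (\beta_i^{(N)})^2\Gamma_{ii} + 2\beta_i^{(N)}\Gamma_{-i,12}\beta_{-i} + \beta_{-i}'\Gamma_{-i,22}\beta_{-i},
\end{equation*}
substitute into the relation $\alpha_i^2 = \beta_i^2 + \Gamma_{ii}^{-1}\bigl[(\alpha^{(N)})^2 - (\beta^{(N)})'\Gamma\beta^{(N)}\bigr]$, and use $\Gamma_{ii}\beta_i^2 = \Gamma_{ii}(\beta_i^{(N)})^2 + 2\beta_i^{(N)}\Gamma_{-i,12}\beta_{-i} + \Gamma_{ii}^{-1}(\Gamma_{-i,12}\beta_{-i})^2$ to see that the terms linear in $\beta_i^{(N)}$ cancel and the quadratic terms recombine into precisely the Schur complement $\beta_{-i}'(\Gamma_{-i,22} - \Gamma_{-i,21}\Gamma_{ii}^{-1}\Gamma_{-i,12})\beta_{-i}$ appearing in the statement. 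The main obstacle is purely algebraic bookkeeping at this final step; the conceptual content lies in the mixture-representation identity together with the scaling property of the GIG family established by the one-line change of variables.
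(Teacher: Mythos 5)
Your argument is correct, and it is worth noting that the paper itself does not prove this lemma at all: it is quoted from Masuda~\cite{M04}, who in turn refers to~\cite{BJ81}, where the marginals are obtained as a special case of the closure of the multivariate GH class under affine transformations (the same machinery the paper invokes via~\cite{B81} elsewhere). What you give instead is a self-contained derivation that stays entirely inside the paper's own framework: you start from the normal variance-mean mixture representation~\eqref{eq:mean-variance}, project onto the $i$-th coordinate so that, conditionally on the GIG variable $Z$, the coordinate is Gaussian with mean $\mu_i^{(N)}+(\gG\gb^{(N)})_iZ$ and variance $Z\gG_{ii}$ (using that the $i$-th row of the Cholesky factor has squared norm $\gG_{ii}$), and then absorb $\gG_{ii}$ into the subordinator. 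Your scaling claim for the GIG family checks out against the paper's density $f^{GIG}(x;a,b,p)$ (the change of variables sends $(a,b)$ to $(\sqrt{c}\,a,\,b/\sqrt{c})$ and leaves $p$ and the product $ab$ fixed), and the resulting identities $\gb_i=(\gG\gb^{(N)})_i/\gG_{ii}$, $\gd_i=\sqrt{\gG_{ii}}\,\gd^{(N)}$, $\ga_i^2-\gb_i^2=\big((\ga^{(N)})^2-\gb^{(N)}{}'\gG\gb^{(N)}\big)/\gG_{ii}$ do reduce, after the quadratic-form expansion you indicate, to the Schur-complement expression for $\ga_i$ in the statement; I verified the bookkeeping and the linear terms in $\gb_i^{(N)}$ cancel as claimed. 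The only cosmetic caveat is that reading off the one-dimensional parameters from the scalar mixture uses the paper's own convention $\ga=\sqrt{b^2+\gb^2}$, $\gd=a$, $\gl=p$ from~\eqref{eq:mean-variance} with $N=1$, which you do implicitly; with that made explicit, your proof is a complete and arguably more elementary replacement for the external citation, at the cost of assuming the mixture representation of the multivariate GH law as the starting point rather than its density.
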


Assume that $\ell^{GH_N}(1)\sim GH_N(\lambda^{(N)},\alpha^{(N)},\beta^{(N)},\delta^{(N)},\mu^{(N)},\Gamma)$, since this is a necessary (and sufficient) condition so that the (truncated) KL expansion is a GH process. 
Lemma~\ref{lem:GH_N} gives immediately, that for all $i=1,\dots,N$, the parameters $\gl_i=\gl^{(N)}$ have to be identical, whereas the drift $\mu_i$ may be chosen arbitrary for each process $\ell_i^{GH}$. Furthermore, the expectation and  covariance matrix of $\ell^{GH_N}(1)$ is given by 
\begin{equation} \label{eq:EZ}
\E(\ell^{GH_N}(1))=\mu^{(N)}+\frac{\gd^{(N)}K_{\gl^{(N)}+1}(\gd^{(N)}\gg^{(N)})}{\gg^{(N)}K_{\gl^{(N)}}(\gd^{(N)}\gg^{(N)})}\gG\gb^{(N)}
\end{equation}
and
\begin{equation} \label{eq:VarZ}
\begin{split}
\text{Var}(\ell^{GH_N}(1))&=\frac{\gd^{(N)}K_{\gl^{(N)}+1}(\gd^{(N)}\gg^{(N)})}{\gg^{(N)}K_{\gl^{(N)}}(\gd^{(N)}\gg^{(N)})}\gG
+\Big(\frac{\gd^{(N)}}{\gg^{(N)}}\Big)^2(\gG\gb^{(N)})(\gG\gb^{(N)})'\\
&\qquad\qquad\cdot\Bigg(\frac{K_{\gl^{(N)}+2}(\gd^{(N)}\gg^{(N)})}{K_{\gl^{(N)}}(\gd^{(N)}\gg^{(N)})}-\frac{K^2_{\gl^{(N)}+1}(\gd^{(N)}\gg^{(N)})}{K^2_{\gl^{(N)}}(\gd^{(N)}\gg^{(N)})}\Bigg),
\end{split}
\end{equation}
where $\gg^{(N)}:=((\ga^{(N)})^2-\gb^{(N)}~\!\!'\gG\gb^{(N)})^{1/2}$ (see \cite{M04}). 

\begin{ex} \label{ex1}
Consider the case that the processes $\ell_1^{GH},\dots,\ell_N^{GH}$ are generated by the same subordinating $GIG(a,b,p)$ process $\ell^{GIG}$, i.e. 
\begin{equation*}
\ell_i^{GH}(t)=\mu_it+\gb_i\ell^{GIG}(t)+w_i(\ell^{GIG}(t)).
\end{equation*}
Then $\ell_i^{GH}(1)\sim GH(\gl,\ga_i,\gb_i,\gd,\mu_i)$, where $\gl=p$, $\gd=a$ are independent of $i$ and $\ga_i=(b^2+\gb_i^2)^{1/2}$. If $\mu^{(N)}:=(\mu_1\dots,\mu_N)'$, $\gb^{(N)}:=(\gb_1,\dots,\gb_N)'$ and $\gG$ is the $N\times N$ identity matrix, then 
\begin{equation*}
\begin{split}
\ell^{GH_N}(t)=(\ell_1^{GH}(t),\dots,\ell_N^{GH}(t))'&\stackrel{\cL}{=}\mu t+\gb \ell^{GIG}(t)+w^N(\ell^{GIG}(t))\\
&=\mu t+\gG\gb \ell^{GIG}(t)+\sqrt{\gG}w^N(\ell^{GIG}(t)),
\end{split}
\end{equation*}
where $w^N$ is a $N$-dimensional Brownian motion independent of $\ell^{GIG}$. Hence, $\ell^{GH_N}(t)$ is a multi-dimensional $GH_N(\gl,\ga^{(N)},\gb^{(N)},\gd,\mu^{(N)}$,$\gG)$ process with $\ga^{(N)}=\sqrt{b^2+\gb'\gb}$. 
One checks using Lemma~\ref{lem:GH_N} that the parameters of the marginals of $\ell^{GH_N}(1)$ and $\ell_i^{GH}(1)$ coincide for each $i$, and that expectation and covariance of $\ell^{GH_N}(1)$ are given by Eq.~\eqref{eq:EZ} and Eq.~\eqref{eq:VarZ}.

By Lemma~\ref{lem:GH_lin}, we have that the Karhunen-Lo\`eve expansion
\begin{equation*}
L_N^{GH}(x)(t)=\sum_{i=1}^N\varphi_i(x)\ell_i^{GH}(t)
\end{equation*}
in this example is a GH process for each $x\in\mathcal{D}$ and arbitrary basis functions $(\varphi_i,i=1,\ldots,N)$.
\end{ex}

\begin{rem}\label{rem:subordinators}
 Lemma~\ref{lem:GH_N} dictates that the subordinators $(\ell^{GIG}_i,i=1,\ldots,N)$ cannot be independent. In Example~\ref{ex1} fully correlated subordinators were used. A different way to correlate the subordinators, so that Lemma~\ref{lem:GH_N} is fulfilled, would lead to a correlation matrix, just being multiplied with $\Gamma$. For simplicity, in the remainder of the paper, especially for the numerical examples in Section~\ref{sec:num}, we use fully correlated subordinators. 
\end{rem}

As shown in \cite[Theorem 1c]{B81} the class of $N$-dimensional GH distributions is closed under regular linear transformations: If $N\in\N$, $\ell^{GH_N}(1)\sim GH_N(\lambda,\alpha,\beta,\delta,\mu,\Gamma)$, $A$ is an invertible $N\times N$-matrix and $b\in\R^N$, then the random vector $A\ell^{GH_N}(1)+b$ has distribution
$$GH_N(\lambda,||A||^{-1/N}\alpha,A\invtr\beta,||A||^{1/N}\delta,A\mu+b,||A||^{-2/N}A\Gamma A'),$$
where $||A||$ denotes the absolute value of the determinant of $A$. With this and the assumption $\ell^{GH_N}(1)\sim GH_N(\lambda^{(N)},\alpha^{(N)},\beta^{(N)},\delta^{(N)},\mu^{(N)},\Gamma)$, we are also able to determine the point-wise law of $L_N^{GH}$ for given coefficients $\varphi_1(x),\dots,\varphi_N(x)$. 

\begin{lem} \label{lem:KL_GH}
Let $\ell^{GH_N}(1)\sim GH_N(\lambda^{(N)},\alpha^{(N)},\beta^{(N)},\delta^{(N)},\mu^{(N)},\Gamma)$ and for $x\in \D$ let  $(\varphi_i(x),i = 1,\dots,N)$ be a sequence of non-zero coefficients (see Remark~\ref{rem:coefficients}). 
Then $(L_N^{GH}(x)(t),t\in\T)$ is a GH L\'evy process with parameters depending on $x$.
\end{lem}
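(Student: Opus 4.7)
The plan is to reduce the claim to a direct corollary of Lemma~\ref{lem:GH_lin} together with Barndorff-Nielsen's closure property of multi-dimensional GH distributions under regular linear transformations. Fixing $x\in\D$, I identify the coefficient vector $\mathbf{a} = (\varphi_1(x),\ldots,\varphi_N(x))$ and note that by hypothesis (and Remark~\ref{rem:coefficients}, which allows me to restrict to the non-vanishing indices if necessary) all entries are non-zero. Then
\begin{equation*}
L_N^{GH}(x)(t) = \sum_{i=1}^N \varphi_i(x)\,\ell_i^{GH}(t) = \ell^{GH,\mathbf{a}}(t),
\end{equation*}
so the process in question is exactly of the form considered in Lemma~\ref{lem:GH_lin}.

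Since $\ell^{GH_N}(1)\sim GH_N(\lambda^{(N)},\alpha^{(N)},\beta^{(N)},\delta^{(N)},\mu^{(N)},\Gamma)$ by assumption, Lemma~\ref{lem:GH_lin} applies directly and yields that $\ell^{GH,\mathbf{a}}(1)$ is one-dimensionally GH-distributed. To make the dependence on $x$ explicit, I would complete $\mathbf{a}$ to a non-singular matrix $A(x)\in\R^{N\times N}$ whose first row equals $\mathbf{a}$ (for instance by appending appropriate standard basis vectors). By~\cite[Theorem 1c]{B81}, the vector $A(x)\ell^{GH_N}(1)$ is again $N$-dimensionally GH-distributed with parameters
\begin{equation*}
\bigl(\lambda^{(N)},\,\|A(x)\|^{-1/N}\alpha^{(N)},\,A(x)\invtr\beta^{(N)},\,\|A(x)\|^{1/N}\delta^{(N)},\,A(x)\mu^{(N)},\,\|A(x)\|^{-2/N}A(x)\Gamma A(x)'\bigr),
\end{equation*}
and Lemma~\ref{lem:GH_N} then gives the one-dimensional GH parameters $(\lambda_1(x),\alpha_1(x),\beta_1(x),\delta_1(x),\mu_1(x))$ of its first marginal $L_N^{GH}(x)(1)$ in closed form.

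To upgrade from ``$L_N^{GH}(x)(1)$ is GH-distributed'' to ``$(L_N^{GH}(x)(t),t\in\T)$ is a GH L\'evy process'', I observe that each $\ell_i^{GH}$ is a L\'evy process on $(\gO,(\cA_t),\bP)$, so their linear combination $L_N^{GH}(x)(\cdot)$ inherits stationary and independent increments, starts at zero, and is stochastically continuous. Moreover, for every $t\in\T$ the random vector $\ell^{GH_N}(t)$ is multivariate GH (since the GH class is closed under the convolution semigroup generated by its own infinitely divisible law, by Lemma~\ref{lem:sub}), so the argument above applied at time $t$ shows that $L_N^{GH}(x)(t)$ is one-dimensionally GH-distributed for every $t$. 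Combined with the L\'evy property, this identifies $(L_N^{GH}(x)(t),t\in\T)$ as a GH L\'evy process with $x$-dependent parameters.

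The main obstacle is essentially bookkeeping: nothing new has to be proved beyond reusing Lemma~\ref{lem:GH_lin}, Lemma~\ref{lem:GH_N} and the transformation formula of~\cite[Theorem 1c]{B81}. The only point requiring a tiny bit of care is verifying that the multivariate GH structure survives when we move from time $t=1$ to general $t$, which follows from the subordinated-Brownian-motion representation of Lemma~\ref{lem:sub} (the same structure matrix $\Gamma$ and drift/skewness vectors govern $\ell^{GH_N}(t)$ for all $t$, with only $\delta$ and $\mu$ picking up a factor of $t$ in the NIG case, cf.\ Remark~\ref{rem:NIG}; the general GH case still preserves the multi-dimensional GH structure of $\ell^{GH_N}(t)$ via the GIG subordinator, which is what is needed to re-apply the argument pointwise in $t$).
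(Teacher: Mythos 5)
Your reduction of the time-$1$ marginal to Lemma~\ref{lem:GH_lin} and the transformation formula of \cite[Theorem 1c]{B81} combined with Lemma~\ref{lem:GH_N} is exactly the paper's route (the paper completes the coefficient vector to the explicit matrix $A(x)$ of Eq.~\eqref{eq:A}; appending standard basis rows as you suggest works just as well and yields the same first marginal). The genuine problem is your final step. You claim that for every $t\in\T$ the vector $\ell^{GH_N}(t)$ is again multivariate GH, ``since the GH class is closed under the convolution semigroup generated by its own infinitely divisible law,'' and you repeat this in your closing paragraph (``the general GH case still preserves the multi-dimensional GH structure of $\ell^{GH_N}(t)$ via the GIG subordinator''). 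This is false for general $\gl$: as the paper itself notes (citing \cite[Section 5]{E01}) just before Lemma~\ref{lem:sub}, increments of a GH process, and of the GIG subordinator, over a time length other than one do not in general follow a GH resp.\ GIG law; by Remark~\ref{rem:convolution} only the NIG subclass ($\gl=-\tfrac12$, Remark~\ref{rem:NIG}) is closed under convolution. Since $\ell^{GIG}(t)$ is not GIG-distributed for $t\neq1$, the variance-mean mixture representation of Lemma~\ref{lem:sub} does not produce a GH law for $\ell^{GH_N}(t)$, and your ``re-apply the argument pointwise in $t$'' step fails.

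Fortunately, the statement you tried to establish this way is not needed. In this paper a GH L\'evy process is by definition the L\'evy process with characteristic function $(\phi_{GH})^t$, i.e.\ the one generated by the infinitely divisible GH law as its time-$1$ marginal; it is neither required nor true that the marginals at all times are GH. So the correct conclusion is the one used in the proof of Lemma~\ref{lem:GH_lin} and in the paper's own proof of this lemma: $L_N^{GH}(x)(\cdot)=\sum_{i=1}^N\varphi_i(x)\ell_i^{GH}(\cdot)$ is a one-dimensional L\'evy process --- because it is a fixed linear functional of the jointly L\'evy vector process $\ell^{GH_N}$; note that it is this joint L\'evy property, not the L\'evy property of each coordinate separately, that gives stationary and independent increments of the sum --- and its time-$1$ marginal is one-dimensional GH with the parameters you computed via \cite{B81} and Lemma~\ref{lem:GH_N}. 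By infinite divisibility this already identifies the process as the GH L\'evy process with those $x$-dependent parameters. With your last step replaced by this observation, your argument coincides with the paper's proof.
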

\begin{proof}
It is again sufficient to show that $L_N^{GH}(x)(1)$ follows a GH law, the resulting parameters are given below. For $x\in \D$, define the $N\times N$ matrix $A(x)$ via
\begin{equation} \label{eq:A}
A(x)_{ij}:=
\begin{cases}
     \varphi_j(x) & \text{if $i=1$ or if $i=j$}   \\
     0  & \text{elsewhere}  
   \end{cases}.
\end{equation}
The matrix $A(x)$ is invertible with determinant $\prod_{i=1}^N\varphi_i(x)\neq0$ and inverse $A(x)^{-1}$ given by
\begin{equation*}
A(x)^{-1}_{ij}:=
\begin{cases}
     -\varphi_1(x)^{-1} & \text{if $i=1$ and $j\ge2$}   \\
     \varphi_i(x)^{-1} & \text{if $i=j$}   \\
     0  & \text{elsewhere}  
   \end{cases}.
\end{equation*}
Then $L_N^{GH}(x)(1)=\sum_{i=1}^N\varphi_i(x)\ell_i^{GH}(1)$ is the first entry of the random vector $A(x)\ell^{GH_N}(1)$. 
By the affine transformation property of the GH distribution and Lemma~\ref{lem:GH_N} it follows that $L_N^{GH}(x)(1)$ is one-dimensional GH-distributed. 
Now define $\widetilde{\gG}:=A(x)\gG A(x)'$, the partition
\begin{equation*}
\widetilde{\gG}=\begin{pmatrix} \widetilde{\gG}_{11} & \widetilde{\gG}_{2,1}'\\ \widetilde{\gG}_{2,1} & \widetilde{\gG}_{2,2} \end{pmatrix}
\end{equation*}
such that $\widetilde{\gG}_{2,1}\in\R^{N-1}$ and $\widetilde{\gG}_{2,2}\in\R^{(N-1)\times(N-1)}$ and the vector
$$\widetilde{\gb}:=\left(\gb_2^{(N)}\varphi_2(x)^{-1}-\gb_1^{(N)}\varphi_1(x)^{-1},\dots,\gb_N^{(N)}\varphi_N(x)^{-1}-\gb_1^{(N)}\varphi_1(x)^{-1}\right)'\in\R^{N-1}.$$
The parameters $\lambda_L,\alpha_L(x),\beta_L(x),\delta_L(x)$ and $\mu_L(x)$ of $L_N^{GH}(x)$ are then given by
\begin{align*}
\gl_L&=\gl^{(N)},\\ 
\ga_L(x)&=\widetilde{\gG}_{11}^{-1/2}\left[(\ga^{(N)})^2-\widetilde{\gb}'(\widetilde{\gG}_{2,2}-
\widetilde{\gG}_{11}^{-1}\widetilde{\gG}_{2,1}\widetilde{\gG}_{2,1}')\widetilde{\gb} \right]^{1/2},\\
\gd_L(x)&=\gd^{(N)}\sqrt{\widetilde{\gG}_{11}}=\delta^{(N)}\Big(\sum_{i,j=1}^N\varphi_i(x)\varphi_j(x)\gG_{ij}\Big)^{1/2},\\
\gb_L(x)&=\gb_1^{(N)}\varphi_1(x)^{-1}+\widetilde{\gG}_{11}^{-1}\widetilde{\gG}_{2,1}'\widetilde{\gb}\quad\text{and}\\
\mu_L(x)&=[A(x)\mu^{(N)}]_1=\sum_{i=1}^N\varphi_i(x)\mu_i^{(N)}.
\end{align*}
\end{proof}
To ensure $L^2(\gO;\R)$ convergence as in Theorem~\ref{thm:H_error} of the series
\begin{equation*}
\widetilde L_N^{GH}(x)(t) = \sum_{i=1}^N\sqrt{\rho_i}e_i(x)\widetilde\ell_i^{GH}(t),
\end{equation*}
we need to simulate approximations of uncorrelated, one-dimensional GH processes $\ell_i^{GH}$ with given parameters 
$\ell_i^{GH}(1)\sim GH(\gl_i, \ga_i,\gb_i, \gd_i,\mu_i)$.
To obtain a sufficiently good approximation of the L\'evy field, $N$ is coupled to the time discretization of $\T$ and the decay of the eigenvalues of $Q$ (see Remark~\ref{rem:trunc}). 
The simulation of a large number $N$ of independent GH processes is computationally expensive, so we focus on a different approach.
Instead of generating $N$ dependent but uncorrelated, one-dimensional processes, we generate one $N$-dimensional process with decorrelated marginals.
For this approach to work we need to impose some restrictions on the target parameters $\gl_i, \ga_i,\gb_i$ and $\gd_i$.

\begin{thm}\label{thm:Z_U}
Let $(\ell_i^{GH},i=1\ldots,N)$ be one-dimensional GH processes, where, for $i=1,\dots,N$,
$\ell_i^{GH}(1)\sim GH(\gl_i, \ga_i,\gb_i, \gd_i,\mu_i).$
The vector $\ell^{GH_N}:=(\ell_1^{GH},\dots,\ell_N^{GH})'$ is only a $N$-dimensional GH process if there are constants $\lambda\in\R$ and $c>0$ such that for any $i$
\begin{equation*}
\gl_i=\gl\quad\text{and}\quad \gd_i(\ga_i^2-\gb_i^2)^{1/2}=c.
\end{equation*}
If, in addition, the symmetric matrix $U\in\R^{N\times N}$ defined by
\begin{equation*}
U_{ij}:=
\begin{cases}
\gd_i^2 &\text{if $i=j$}\\
\frac{K_{\gl+1}(c)^2-K_{\gl+2}(c)K_{\gl}(c)}{K_{\gl+1}(c)K_{\gl}(c)}\frac{\gb_i\gd_i^2\gb_j\gd_j^2}{c} &\text{if $i\neq j$}
\end{cases},
\end{equation*}
is positive definite, it is possible to construct a $N$-dimensional GH process $\ell^{GH_N,U}$ with uncorrelated marginals $\ell^{GH,U}_i$ and 
\begin{equation*}
 \ell^{GH,U}_i(1)\stackrel{\mathcal L}{=} \ell_i^{GH}(1)\sim GH(\gl_i, \ga_i,\gb_i, \gd_i,\mu_i).
\end{equation*}
\end{thm}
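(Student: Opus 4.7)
The plan is to split the proof into a necessity part, which pins down the two stated parameter conditions, and an explicit construction that realizes a decorrelating choice of $(\gl^{(N)},\ga^{(N)},\gb^{(N)},\gd^{(N)},\mu^{(N)},\gG)$. I would rely throughout on Lemma~\ref{lem:GH_N} (for marginal parameters) and on the covariance formula~\eqref{eq:VarZ} (for the decorrelation condition).

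For \emph{necessity}, assume $\ell^{GH_N}\sim GH_N(\gl^{(N)},\ga^{(N)},\gb^{(N)},\gd^{(N)},\mu^{(N)},\gG)$. Lemma~\ref{lem:GH_N} immediately gives $\gl_i=\gl^{(N)}$ for every $i$, so the $\gl_i$ are forced to coincide. Next I would compute the product $\gd_i^2(\ga_i^2-\gb_i^2)$ using the explicit expressions from Lemma~\ref{lem:GH_N}. Observing that $\gG_{-i,12}\gb_{-i}$ is a scalar, the cross-term $\gG_{ii}^{-1}\gb_{-i}'\gG_{-i,21}\gG_{-i,12}\gb_{-i}$ appearing inside $\ga_i^2$ exactly cancels the contribution $\gG_{ii}^{-1}(\gG_{-i,12}\gb_{-i})^2$ coming from expanding $\gb_i^2$. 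Using the block decomposition of the quadratic form $\gb^{(N)\prime}\gG\gb^{(N)}$ along the $i$-th row/column, what remains collapses to
\[
\gd_i^2(\ga_i^2-\gb_i^2)=(\gd^{(N)})^2\big[(\ga^{(N)})^2-\gb^{(N)\prime}\gG\gb^{(N)}\big]=(\gd^{(N)}\gg^{(N)})^2,
\]
which is independent of $i$, so $c:=\gd_i\sqrt{\ga_i^2-\gb_i^2}$ is well-defined.

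For the \emph{sufficient} direction I would set $\gl^{(N)}:=\gl$, $\mu^{(N)}:=(\mu_1,\ldots,\mu_N)'$, $\gd^{(N)}:=|\det U|^{1/(2N)}$ and $\gG:=U/|\det U|^{1/N}$. By positive definiteness of $U$, $\gG$ is spd with $\det\gG=1$, so it is an admissible structure matrix. Since $U_{ii}=\gd_i^2$, this yields $\gG_{ii}=\gd_i^2/(\gd^{(N)})^2$, i.e.\ $\gd^{(N)}\sqrt{\gG_{ii}}=\gd_i$, matching the marginal $\gd$-parameter demanded by Lemma~\ref{lem:GH_N}. Define $\gb^{(N)}:=\gG^{-1}b$ with $b_i:=\gb_i\gG_{ii}$, so that $(\gG\gb^{(N)})_i=\gb_i\gG_{ii}$; the marginal $\gb$-formula from Lemma~\ref{lem:GH_N} then reads $\gb_i^{(N)}+\gG_{ii}^{-1}\gG_{-i,12}\gb_{-i}=(\gG\gb^{(N)})_i/\gG_{ii}=\gb_i$, as required. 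Finally set $(\ga^{(N)})^2:=\gb^{(N)\prime}\gG\gb^{(N)}+(c/\gd^{(N)})^2$, so that $\gg^{(N)}=c/\gd^{(N)}$; running the necessity identity backwards then forces the marginal $\ga$-parameter to coincide with $\ga_i$. Infinite divisibility of the GH law promotes this to an $N$-dimensional GH process $\ell^{GH_N,U}$ with the prescribed one-dimensional marginals.

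It remains to verify decorrelation. Plugging the identifications $\gG_{ij}=U_{ij}/(\gd^{(N)})^2$, $\gG_{ii}\gG_{jj}=\gd_i^2\gd_j^2/(\gd^{(N)})^4$, $(\gG\gb^{(N)})_i=\gb_i\gG_{ii}$ and $\gd^{(N)}/\gg^{(N)}=(\gd^{(N)})^2/c$ into the off-diagonal of~\eqref{eq:VarZ}, both summands become scalar multiples of $\gb_i\gb_j\gd_i^2\gd_j^2/(c^2 K_\gl^2(c))$, and the explicit form of $U_{ij}$ is precisely chosen so that the two resulting Bessel combinations $K_{\gl+1}^2(c)-K_{\gl+2}(c)K_\gl(c)$ and $K_{\gl+2}(c)K_\gl(c)-K_{\gl+1}^2(c)$ cancel. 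Hence $\operatorname{Cov}(\ell_i^{GH,U}(1),\ell_j^{GH,U}(1))=0$ for $i\neq j$.

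The main obstacle I anticipate is the bookkeeping in the necessity step: the Lemma~\ref{lem:GH_N} formulas look asymmetric, and one has to spot the cancellation that makes $\gd_i^2(\ga_i^2-\gb_i^2)$ independent of $i$. Once this identity is in hand, the definition of $U$ is essentially \emph{reverse-engineered} from the decorrelation equation, so the remaining verification is routine algebra; positive-definiteness of $U$ is needed only to ensure $\gG$ is a legitimate structure matrix in the construction, and plays no role in necessity.
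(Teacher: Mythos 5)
Your proposal is correct and follows essentially the same route as the paper: necessity via the marginal-parameter formulas of Lemma~\ref{lem:GH_N} leading to the identity $\gd_i^2(\ga_i^2-\gb_i^2)=(\gd^{(N)})^2\big((\ga^{(N)})^2-\gb^{(N)\prime}\gG\gb^{(N)}\big)$, and the construction $\gd^{(U)}=(\det U)^{1/(2N)}$, $\gG^{(U)}=(\gd^{(U)})^{-2}U$, $\gb^{(U)}$ from the same (uniquely solvable) linear conditions, $\ga^{(U)}$ from that identity, with decorrelation checked through the off-diagonal of Eq.~\eqref{eq:VarZ}. The only differences are cosmetic (you solve for $\gb^{(U)}$ by direct inversion rather than writing out the diagonal-scaled linear system, and you verify decorrelation after the construction rather than before), so no further comment is needed.
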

\begin{proof}
We start with the necessary condition to obtain a multi-dimensional GH distribution. Let $\ell^{GH_N}$ be a $N$-dimensional GH process with
$$\ell^{GH_N}(1)\sim GH_N(\lambda^{(N)},\alpha^{(N)},\beta^{(N)},\delta^{(N)},\mu^{(N)},\Gamma).$$ 
If the law of the marginals of $\ell^{GH_N}$ is denoted by $ \ell_i^{GH}(1)\sim GH(\gl_i, \ga_i,\gb_i, \gd_i,\mu_i),$
then one sees immediately from Lemma~\ref{lem:GH_N} that $\gl_i=\gl^{(N)}$ and $\mu_i=\mu_i^{(N)}$ for all $i=1,\dots,N$.
 With the equations for $\gb_i$ and $\gd_i$ from Lemma~\ref{lem:GH_N}, we derive for $\gG\gb^{(N)}$
\begin{equation}\label{eq:gGb}
(\gG\gb^{(N)})_i=\gG_{ii}\gb_i^{(N)}+\sum_{k=1, k\neq i}^N \gG_{ik}\gb_k^{(N)}=\gG_{ii}\gb_i^{(N)}+\gG_{ii}(\gb_i-\gb_i^{(N)})=\big(\frac{\gd_i}{\gd^{(N)}}\big)^2\gb_i,
\end{equation}
which leads to
\begin{align*}
\ga_i^2&=\gG_{ii}^{-1}(\ga^{(N)})^2-\gG_{ii}^{-1}\sum_{k=1,k\neq i}^N\gb_k^{(N)}\sum_{l=1,l\neq i}^N\gG_{kl}\gb_l^{(N)}+\big(\gG_{ii}^{-1}\sum_{k=1, k\neq i}^N \gG_{ik}\gb_k^{(N)}\big)^2\\
&=\big(\frac{\gd^{(N)}\ga^{(N)}}{\gd_i}\big)^2-\big(\frac{\gd^{(N)}}{\gd_i}\big)^2\sum_{k=1,k\neq i}^N\gb_k^{(N)}((\gG\gb^{(N)})_k-\gG_{ik}\gb_i^{(N)})+(\gb_i-\gb_i^{(N)})^2\\
&=\big(\frac{\gd^{(N)}\ga^{(N)}}{\gd_i}\big)^2-\sum_{k=1,k\neq i}^N\gb_k^{(N)}\frac{\gd_k^2}{\gd_i^2}\gb_k\\
&\quad+\big(\frac{\gd^{(N)}}{\gd_i}\big)^2\gb_i^{(N)}((\gG\gb^{(N)})_i-\gG_{ii}\gb_i^{(N)})+\gb_i^2-2\gb_i\gb_i^{(N)}+(\gb_i^{(N)})^2\\
&=\big(\frac{\gd^{(N)}\ga^{(N)}}{\gd_i}\big)^2-\sum_{k=1}^N\gb_k^{(N)}\frac{\gd_k^2}{\gd_i^2}\gb_k+\gb_i^2.
\end{align*}
The last equation is equivalent to 
\begin{equation} \label{eq:gamma}
\gd_i^2(\ga_i^2-\gb_i^2)=(\gd^{(N)}\ga^{(N)})^2-\sum_{k=1}^N\gb_k^{(N)}\underbrace{\gd_k\gb_k}_{=(\gd^{(N)})^2(\Gamma\gb_k^{(N)})}=(\gd^{(N)})^2\left((\ga^{(N)})^2-\gb^{(N)}~\!\!'\gG\gb^{(N)}\right),
\end{equation}
and since the right hand side does not depend on $i$, we get that $\gd_i^2(\ga_i^2-\gb_i^2)>0$ has to be independent of $i$.

Now assume we have a set of parameters $((\gl_i,\ga_i,\gb_i,\gd_i,\mu_i),i=1,\dots,N)$ with 
\begin{equation*}
\gd_i\sqrt{\ga_i^2-\gb_i^2}=c>0\quad\text{and}\quad\gl_i=\gl\in\R,
\end{equation*}
where $c$ and $\gl$ are independent of the index $i$. 
Furthermore, let the matrix $U$ as defined in the claim be positive definite.
We show how parameters $\gl^{(U)}, \ga^{(U)}, \gb^{(U)}, \gd^{(U)},  \mu^{(U)}$ and $\gG^{(U)}$ of a $N$-dimensional GH process $\ell^{GH_N,U}$ may be chosen, such that its marginals are uncorrelated with law $\ell^{GH,U}_i(1)\sim GH(\gl,\ga_i,\gb_i,\gd_i,\mu_i)$. Clearly, we have to set $\gl^{(U)}:=\gl$ and $\mu^{(U)}:=(\mu_1,\dots,\mu_N)'$. Eq.~\eqref{eq:gGb} and Eq.~\eqref{eq:gamma} yield the conditions
$$(\gd^{(U)})^2(\gG^{(U)}\gb^{(U)})_i=\gd_i^2\gb_i \quad\text{and}\quad \gd^{(U)}\sqrt{(\ga^{(U)})^2-\gb^{(U)T}\gG^{(U)}\gb^{(U)}}=\gd_i\sqrt{\ga_i^2-\gb_i^2}=c.$$
If $(\gd^{(U)})^2\gG^{(U)}$ fulfills the identity $(\gd^{(U)})^2\gG^{(U)}=U$, we get by Eq.~\eqref{eq:VarZ} for $i\neq j$
\begin{align*}
\text{Cov}(\ell^{GH,U}_i(1),\ell^{GH,U}_j(1))&=\frac{K_{\gl+1}(c)}{cK_{\gl}(c)}(\gd^{(U)})^2\gG_{ij}^{(U)}\\
&\quad+\frac{K_{\gl+2}(c)K_{\gl}(c)-K_{\gl+1}^2(c)}{c^2K_{\gl}(c)^2}((\gd^{(U)})^2\gG^{(U)}\gb^{(U)})_i((\gd^{(U)})^2\gG^{(U)}\gb^{(U)})_j\\
&=\frac{K_{\gl+1}(c)}{cK_{\gl}(c)}U_{ij}+\frac{K_{\gl+2}(c)K_{\gl}(c)-K_{\gl+1}^2(c)}{c^2K_{\gl}(c)^2}\gd_i^2\gb_i\gd_j^2\gb_j=0,
\end{align*}
hence all marginals are uncorrelated. To obtain a well-defined $N$-dimensional GH distribution, we still have to make sure that $\gG^{(U)}$ is spd with unit determinant. 
If we define $\gd^{(U)}:=(\text{det}(U))^{1/(2N)}$, then $\gd^{(U)}>0$ (since $\text{det}(U)>0$ by assumption) and $\gG^{(U)}=(\gd^{(U)})^{-2}U$ is spd with $\text{det}(\gG^{(U)})=1$. It remains to determine appropriate parameters $\ga^{(U)}>0$ and $\gb^{(U)}\in\R^N$. 
For $\gb^{(U)}$, we use once again Lemma~\ref{lem:GH_N} to obtain the linear equations
\begin{equation*}
\gb_i=\gb_i+(\gG_{ii}^{(U)})^{-1}\sum_{k=1,k\neq i}\gG_{ik}^{(U)}\gb_k^{(U)},
\end{equation*}
for $i=1,\dots,N$. The corresponding system of linear equations is given by
\begin{equation*}
\begin{pmatrix}
(\gG_{11}^{(U)})^{-1}\\
&\ddots\\
&&(\gG_{NN}^{(U)})^{-1}\\
\end{pmatrix}
\gG^{(U)}\gb^{(U)}=
\left(\begin{array}{c} \gb_1\\ \vdots \\ \gb_N \end{array}\right), 
\end{equation*}
and has a unique solution $\gb^{(U)}$ for any right hand side $(\gb_1,\dots,\gb_N)'$, because $\gG^{(U)}$ as constructed above is invertible with positive diagonal entries.
Finally, we are able to calculate $\ga^{(U)}$ via Equation~\eqref{eq:gamma} as
\begin{equation*}
\ga^{(U)}=\Big(\sum_{k=1}^N\gd_k^2\gb_k\gb_k^{(U)}+\big(\frac{c}{\gd^{(U)}}\big)^2\Big)^{1/2}=\Big(\gb^{(U)}~\!\!'\Gamma^{(U)}\gb^{(U)}+\big(\frac{c}{\gd^{(U)}}\big)^2\Big)^{1/2}
\end{equation*}
 and obtain the desired marginal distributions.
\end{proof}
Note that the KL-expansion $L^{GH}_N(x)(\cdot)$ generated by $(\ell^{GH,U}_i,i=1\ldots,N)$ in Theorem~\ref{thm:Z_U} is a GH process for each $x\in \D$ by Lemma~\ref {lem:KL_GH}, whereas this is not the case if the processes $(\ell_i^{GH},i=1,\ldots,N)$ are generated independently of each other:
By Lemma~\ref{lem:GH_lin} we have that $L^{GH}_N(x)(1)$ is only GH distributed if the vector $(\ell_1^{GH}(1),\dots,\ell_N^{GH}(1))'$ admits a multi-dimensional GH law. 
As noted in~\cite{B81} after Theorem 1, this is impossible if the processes (and hence $(\ell_i^{GH}(1),i=1,\ldots,N)$) are independent.
Whenever Theorem~\ref{thm:Z_U} is applicable, we are able to approximate a GH L\'evy field by generating a $N$-dimensional GH processes, where $N$ is the truncation index of the KL expansion.
To this end, Lemma~\ref{lem:sub} suggests the simulation of GIG processes and then subordinating $N$-dimensional Brownian motions.
With this simulation approach the question arises on why we have taken a detour via the subordinating GIG process instead of using the characteristic function a of GH process in Equation~\eqref{eq:gh_cf} for a ``direct'' simulation. 
This has several reasons: First, the approximation of the inversion formula \eqref{eq:finv} can only be applied for one-dimensional GH processes, where the costs of evaluating $\phi_{GH}$ or $\phi_{GIG}$ are roughly the same. 
In comparison, the costs of sampling a Brownian motion are negligible.
Second, in the multi-dimensional case, we need that all marginals of the GH process are generated by the same or correlated subordinator(s), which leaves us no choice but to sample the underlying GIG process. 
In addition, the simulation of a GH field requires in some cases only one subordinating process to generate a multi-dimensional GH process with uncorrelated marginals (see Theorem~\ref{thm:Z_U}). 
This approach is in general more efficient than sampling a large number of uncorrelated, one-dimensional GH processes for the KL expansion.
As we demonstrate in the following section, it is a straightforward application of the Fourier inversion algorithm to approximate a GIG process $\ell^{GIG}$ with given parameters, since all necessary assumptions are fulfilled and the bounding parameters $\eta, R, \theta$ and $B$ may readily be calculated.

\section{Numerics}\label{sec:num}
In this section we provide some details on the implementation of the Fourier inversion method.
Thereafter, we apply this methodology to approximate a GH L\'evy field and conclude with some numerical examples.
\subsection{Notes on implementation}\label{sec:imp}
Suppose we simulate a given one-dimensional L\'evy process $\ell$ which fulfills Assumption~\ref{ass:1} resp. Assumption~\ref{ass:2}, using the step size $\gD_n>0$ and characteristic function $(\phi_\ell)^{\gD_n}$. 
Usually the parameter $\eta$ cannot be chosen arbitrary high (as for the GIG process), but it may be possible to choose $\eta$ within a certain range, for instance $\eta\in(1,2]$ for the Cauchy process in Example~\ref{ex:eta_cond}.
As a rule of thumb, $\eta$ should always be determined as large as possible, as the convergence rates in Theorems~\ref{thm:L^p 1} and~\ref{thm:L^p 2} directly depend on $\eta$.
In addition, we concluded in Remark~\ref{rem:t-conv} that $D\simeq\gD_n^{p/(p-\eta)}$ is an appropriate choice to guarantee an $L^p$-error of order $\mathcal{O}(\gD_n^{1/p})$.
This means that for a given $p$, $D$ decreases as $\eta$ increases. 
Since the number of summations $M$ in Algorithm~\ref{ass:2} depends on $D$ (see Theorem~\ref{thm:CDF_approx}), an increasing parameter $\eta$ also reduces computational time. 
Once $\eta$ is determined, we derive $R$ by differentiation of $(\phi_\ell)^{\gD_n}$ as in Remark~\ref{rem:2pi}.
Similarly to $\eta$, it is often possible to choose between several values of $\theta>0$, but it is difficult to give a-priori a recommendation on how $\theta$ should be selected. 
One rather calculates for several admissible $\theta$ the constant $C_\theta:=\max_{u\in\R}|u^\theta(\phi_\ell(u))^{\gD_n}|$ numerically and deducts $B_\theta=(2\pi)^{-\theta}C_\theta$.
Each combination of $(\theta,B_\theta)$ then results in a valid number of summations $M_\theta$ in the discrete Fourier Inversion algorithm. 
Since $\theta$ and $B_\theta$ are only necessary to determine $M_\theta$, we may simply use the smallest $M_\theta$ for the simulation.
To find $\widetilde X$ with $\widetilde F(\widetilde X)=U$ in Algorithm~\ref{algo:approx2}, we use a globalized Newton method with \textit{backtracking~line~search}, also known as \textit{Armijo increment control}.
The step lengths during the line search are determined by interpolation, which is a robust technique if combined with a standard Newton method.
Details on the globalized Newton method with backtracking may be found, for example, in \cite{NW06}, an example how the algorithm is used is given in \cite{NR}.
Although convergence of this root finding algorithm is ensured by the increment control, its efficiency depends heavily on the choice of the initial value $\widetilde X_0$. 
Clearly,  $\widetilde X_0$ should depend on the sampled $U\sim\mathcal U([0,1])$ and be related to the target distribution with characteristic function $(\phi_\ell)^{\gD_n}$. 
This means we should determine $\widetilde X_0$ implicitly by $F^{(0)}(\widetilde X_0)=U$, where $F^{(0)}$ is a CDF of a distribution similar to the target distribution, but which can be inverted efficiently.
\subsection{Approximation of a GH field}\label{sec:GH_app}
We consider a GH L\'evy field on the (separable) Hilbert space $H=L^2(\mathcal{D})$ with a compact spatial domain $\mathcal{D}\subset\R^s$.  
The operator $Q$ on $H$ is given by a \textit{Mat\'ern covariance operator} with variance $v>0$, correlation length $r>0$ and a positive parameter $\chi>0$ defined by
\begin{equation*}
[Qh](x):=v\int_\mathcal D k_\chi(x,y)h(y)dy,\quad \text{for }h\in H,
\end{equation*}
where $k_\chi$ denotes the \textit{Mat\'ern kernel}.
For $\chi=\frac{1}{2}$, we obtain the exponential covariance function and for $\chi\to\infty$ the squared exponential covariance function. 
For general $\chi>0$, the Mat\'ern kernel 
\begin{equation*}
 k_\chi(x,y):=\frac{2^{1-\chi}}{\gG_G(\chi)}\Big(\frac{\sqrt{2\chi}|x-y|}{r}\Big)^\chi K_\chi\Big(\frac{\sqrt{2\chi}|x-y|}{r}\Big)
\end{equation*}
fulfills the limit identity $k_\chi(x,x)=\lim_{y\to x}k_\chi(x,y)=1$, which can be easily seen by \cite[Eq. (10.30.2)]{NIST10}. 
Here $\gG_G(\cdot)$ is the Gamma function. As shown in~\cite{E09}, this implies 
\begin{equation}\label{eq:trace}
tr(Q)=\sum_{i=1}^\infty\rho_i=v\int_{\mathcal D}dx,
\end{equation}
where $(\rho_i,i\in\N)$ are the eigenvalues of the Mat\'ern covariance operator $Q$.
In general, no analytical expressions for the eigenpairs $(\rho_i,e_i)$ of $Q$ will be available, but the spectral basis may be approximated by numerically solving a discrete eigenvalue problem and then interpolating by \textit{Nystr\"om's method}. 
For a general overview of common covariance functions and the approximation of their eigenbasis we refer to~\cite{RW06} and the references therein.

Now let $L_N^{GH}$ be an approximation of a GH field by a $N$-dimensional GH process $(\ell^{GH_N}(t),t\in\T)$ with fixed parameters $\gl,\ga,\gd\in\R$, $\gb,\mu\in\R^N$ and $\gG\in\R^{N\times N}$. 
The parameters are chosen in such a way that the multi-dimensional GH process has uncorrelated marginal processes, hence the generated KL expansions 
\begin{equation*}
L_N^{GH}(x)(t)=\sum_{i=1}^N\varphi_i(x)\ell^{GH}_i(t)
\end{equation*}
are again one-dimensional GH processes for any spectral basis $(\varphi_i,i\in\N)$ and fixed $x\in\mathcal D$. 
This in turn means, that we may draw samples of $\ell^{GH_N}$ by simulating a GIG process $\ell^{GIG}$ with parameters $a=\gd, b=(\ga^2-\gb'\gG\gb)^{1/2}$ and $p=\gl$ using Fourier inversion and then subordinating a $N$-dimensional Brownian motion (see Lemma~\ref{lem:sub}).
The characteristic function of a GIG L\'evy process $(\ell^{GIG}(t),t\in\T)$ with (fixed) parameters $a,b>0$ and $p\in\R$ is given by
\begin{equation} \label{cf_gig}
\phi_{GIG}(u;a,b,p):=\E[\exp(iu\ell^{GIG}(1))]=(1-2iub^{-2})^{-p/2}\frac{K_p(ab\sqrt{1-2iub^{-2}})}{K_p(ab)}.
\end{equation}
The GIG distribution corresponding to $(\phi_{GIG})^{\gD_n}$ with $\gD_n=1$ is continuous with finite variance (see~\cite{S03}), which implies that these properties hold for all distributions with characteristic function $(\phi_{GIG})^{\gD_n}$, for any $\gD_n>0$. 
The constants as in Assumption~\ref{ass:1} are derived in the following.
For $k\in\N$, the $k$-th moment of the GIG distribution is given as
\begin{equation*}
0<\E\big((\ell^{GIG}(1))^k\big)=\big(\frac{a}{b}\big)^{k}\frac{K_{p+k}(ab)}{K_p(ab)}<\infty.
\end{equation*}
For any $\eta=2k$ we are, therefore, able to calculate the bounding constant $R$ via
\begin{equation*}
R=(-1)^{k}\frac{d^{2k}}{du^{2k}}((\phi_{GIG}(u;a,b,p))^{\gD_n})\big|_{u=0},
\end{equation*}
because the derivatives of $\phi_{GIG}$ evaluated at $u=0$ are 
\begin{equation*}
(\phi_{GIG}(0;a,b,p))^{(k)}=i^{-k}\E\big((\ell^{GIG}(1))^k\big)=i^{-k}\big(\frac{a}{b}\big)^{k}\frac{K_{p+k}(ab)}{K_p(ab)}.
\end{equation*}
The calculation of the $\eta$-th derivative can be implemented easily by using a version of Fa\`{a} di Bruno's formula containing the Bell polynomials, for details we refer to~\cite{J02}.
The bounding constants $\theta$ and $B$ may be determined numerically as described Section~\ref{sec:imp} (e.g. by using the routine \texttt{fminsearch} in MATLAB).
The derivation of the bounds implies that we can ensure $L^p$ convergence of the approximated GIG process in the sense of Theorem~\ref{thm:L^p} for any $p\ge1$, because it is possible to define $\eta$ as any even integer and then obtain $R$ by differentiation.
We observe that the target distribution with characteristic function $(\phi_{GIG}(u;a,b,p))^{\gD_n}$ and $\gD_n>0$ is not necessarily GIG, except for the Inverse Gaussian (IG) case where 
$p=-1/2$ and $(\phi_{IG}(u;a,b))^{\gD_n}=\phi_{IG}(u;\gD_na,b)$(see Remark~\ref{rem:NIG}). 
This special feature of the IG distribution is exploited to determine the initial values $\widetilde X_0$ in the Newton iteration by moment matching: 
Consider an $IG(a_0,b_0)$ distribution with mean $a_0/b_0$ and variance $a_0/b_0^3$, where the parameters $a_0,b_0>0$ are ``matched'' to the target distribution's mean and variance via
\begin{align*}
 \frac{a_0}{b_0}&=i\frac{d}{du}((\phi_{GIG}(u;a,b,p))^{\gD_n})\big|_{u=0},\\
 \frac{a_0}{b_0^3}&=(-1)\frac{d^2}{du^2}((\phi_{GIG}(u;a,b,p))^{\gD_n})\big|_{u=0}-\Big(i\frac{d}{du}((\phi_{GIG}(u;a,b,p))^{\gD_n})\big|_{u=0}\Big)^2.
\end{align*}
If $F_{\gD_n}^{IG}$ denotes the CDF of this $IG(a_0,b_0)$ distribution, the initial value of the globalized Newton method is given implicitly by $F_{\gD_n}^{IG}(\widetilde X_0)=U$. 
The inversion of $F_{\gD_n}^{IG}$ may be executed numerically by many software packages like MATLAB.\\
With our approach, this results in the approximation of a GIG process $\widetilde \ell^{GIG}$ at discrete times $t_j\in\Xi_n$. 
The $N$-dimensional GH process $\ell^{GH_N}$ may then be approximated at $t_j$ for $j=0,\dots,n$ by the process $\widetilde \ell^{GH_N}$ with $\widetilde \ell^{GH_N}(t_0)=0$ and the increments
\begin{equation*}
\widetilde \ell^{GH_N}(t_j)-\widetilde \ell^{GH_N}(t_{j-1})=\mu \gD_n+\gG\gb(\widetilde \ell^{GIG}(t_j)-\widetilde \ell^{GIG}(t_{j-1})) + \sqrt{(\widetilde \ell^{GIG}(t_j)-\widetilde \ell^{GIG}(t_{j-1}))\gG}w^N_j(1),
\end{equation*}
for $j=1,\dots,n$, where the $w_j^N(1)$ are i.i.d. $\cN_N(0,\1_{N\times N})$-distributed random vectors.
To obtain the process $\widetilde\ell^{GH_N}$ at arbitrary times $t\in\T$, we interpolate the samples $(\widetilde\ell^{GH_N}(t_j),j=0\ldots,n)$ piecewise constant as in Algorithm~\ref{algo:approx2}.
With this, we are able to generate an approximation of  $L_N^{GH}$ at any point $(x,t)\in\mathcal D\times\T$ by
\begin{equation*}
\widetilde L_N^{GH}(x)(t):=\sum_{i=1}^N\varphi_i(x) \widetilde\ell^{GH}_i(t).
\end{equation*}
The knowledge of $tr(Q)$ enables us to determine the truncation index $N$ and the constant $C_\ell$ as in Remark~\ref{rem:trunc}:
For $N\in\N$ let $(\widetilde\ell^{GH}_i,i=1,\ldots,N)$ be the approximations of the processes $(\ell^{GH}_i,i=1,\ldots,N)$, where the random vector $(\ell^{GH}_1(1),\dots,\ell_N^{GH}(1))$ is multivariate GH-distributed by assumption.
Hence, for every $N\in\N$, we obtain the parameters $a(N), b(N), \gl(N)$ of a corresponding GIG subordinator $\ell^{GIG,N}$, which is approximated through a piecewise constant process $\widetilde \ell^{GIG,N}$ as above. 
With Eq.~\eqref{L^p error} we calculate the error
\begin{align}\label{eq:E_GIG}
E_{GIG,N}^p:=\sup_{t\in\T}\E(|\ell^{GIG,N}(t)-\widetilde \ell^{GIG,N}(t)|^p).
\end{align}
for $p\in\{1,2\}$.
If $\gb\in\R^N$ and $\gG\in\R^{N\times N}$ denote the GH parameters corresponding to $(\ell^{GH}_1(1),\dots,\ell_N^{GH}(1))$, the $L^2(\gO;\R)$ approximation error of each process $\ell^{GH}_i$ is given by 
\begin{equation*}
\widetilde C_{\ell,i}:=\sup_{t\in\T}\frac{\E(|\ell^{GH}_i(t)-\widetilde\ell^{GH}_i(t)|^2)}{\gD_n}= \frac{E_{GIG,N}^2(\gG\gb)_i^2+E_{GIG,N}^1\sqrt{\gG_{[i]}\gG_{[i]}'}}{\gD_n},
\end{equation*}
where $\gG_{[i]}$ indicates the $i-$th row of $\gG$.
Starting with $N=1$, we compute the first $N$ eigenvalues and the difference
\begin{equation*}
 T\Big(tr(Q)-\sum_{i=1}^N\rho_i\Big)-\max_{i=1,\dots,N}\widetilde C_{\ell,i}\gD_n \sum_{i=1}^N\rho_i
\end{equation*}
and increase $N$ by one in every step until this expression is close to zero.
If a suitable $N$ is found, we define $C_\ell:=\max_{i=1,\dots,N}\widetilde C_{\ell,i}$ and thus have equilibrated truncation and approximation errors by ensuring Eq.~\eqref{eq:trunc}.
For simplicity, we have implicitly assumed here that the processes $\ell^{GH}_i$ were normalized in the sense that $\text{Var}(\ell^{GH}_i(t))=t$.
This is due to the fact that $\rho_i\ell_i$ (here with $\ell_i=\ell^{GH}_i$) in Theorem~\ref{thm:H_error} represents the scalar product $(L(t),e_i)_H$ with variance $\rho_it$. 
In case we have unnormalized processes, one can simply divide $\ell^{GH}_i$ by its standard deviation (see Formula~\eqref{eq:VarZ}) and adjust the constants $\widetilde C_{\ell,i}$ and $C_\ell$ accordingly.
\subsection{Numerical results}
As a test for our algorithm, we generate GH fields on the time interval $\T=[0,1]$ with step size $\gD_n=2^{-6}$, on the spatial domain $\mathcal{D}=[0,1]$. 
For practical aspects, one is usually interested in the $L^1$-error $\E(|\ell(t)-\widetilde\ell^{(n)}(t)|)$ and the $L^2$-error $(\E(|\ell(t)-\widetilde\ell^{(n)}(t)|^2))^{1/2}$.
Upper bounds for both expressions depend on $\eta$ and $D$ and are given by Ineq.~\eqref{L^p error}. 
To obtain reasonable errors, we refer to the discussion on the choice of $D$ in Remark~\ref{rem:t-conv} and set $D=\gD_n^{1/(1-\eta)}$. 
This ensures that the $L^1$-error is of order $\mathcal O (\gD_n)$ and is a good trade-off between simulation time and the size of the $L^2$-error for most values of $\eta$ in the GIG example below.
Choosing for example $D=\gD_n^{2/(2-\eta)}$ would reduce the $L^2$-error to order $\mathcal O (\gD_n)$, but does not have a significant effect on the $L^1$-error and results in a higher computational time.
For the Mat\'ern covariance operator $Q$ we use variance $v=1$, correlation length $r=0.1$ and $\chi\in\{\frac{1}{2},\frac{3}{2}\}$,
where a higher value of $\chi$ increases the regularity of the field along the $x$-direction.
For the fixed GH parameters we choose $\ga=5$, $\gb=\mu=0_N$, $\gd=4$ and $\gG=\1_N$, the shape parameter $\gl$ will vary throughout our simulation and admits the values $\gl\in\{-\frac{1}{2},1\}$, which results in NIG resp. hyperbolic GH fields.
This parameter setting ensures that the multi-dimensional GH distribution has uncorrelated marginals, hence the truncated KL expansion $L_N^{GH}$ of $L^{GH}$ is itself an infinite dimensional GH L\'evy process.
Further, for every $N\in\N$, the constant $\widetilde C_{\ell,i}$ from Section~\ref{sec:GH_app} is independent of $i=1,\dots,N$, thus the truncation index $N$ can easily be determined to balance out the Fourier inversion and truncation error for each combination of $\gl$ and $\chi$. 
To examine the impact of $\eta$ on the efficiency of the simulation, we set $\eta\in\{4,6,8,10\}$ and the constant $R$ as suggested in Section~\ref{sec:GH_app} for each $\eta$.
For fixed $\eta$ and $R$, we choose $\theta\in\{1,1.5\dots,99.5,100\}$ and calculate for each $\theta$ the constant $B_\theta$ as in Section~\ref{sec:imp}. 
This results in up to $199$ different values for the number of summations $M_\theta$, which all guarantee the desired accuracy $\eps$, meaning we can choose the smallest $M_\gt$ for our simulation.
The optimal value $\gt_{opt}$ which leads to the smallest $M_\gt$ depends highly on the GH parameters and may vary significantly with $\eta$. 
	For $\gl=1$, we found that $\gt_{opt}$ ranges from $34$ to $68.5$, varying with each choice of $\eta\in\{4,6,8,10\}$. 
	In contrast, in the second example with $\gl=-1/2$, $\gt_{opt}=11$ independent of $\eta$.
We generate 1.000 approximations $\widetilde L_N^{GH}$ for several combinations of $\gl$, $\chi$ and $\eta$, which allows us to check if the generated samples actually follow the desired target distributions. 
To this end, we conduct Kolomogorov--Smirnov tests for the subordinating GIG process as well as for the distribution of the GH field at a fixed point in time and space and report on the corresponding p-values.
\begin{figure}
\centering
    \subfigure[Sample of a GH field]{\includegraphics[scale=0.45]{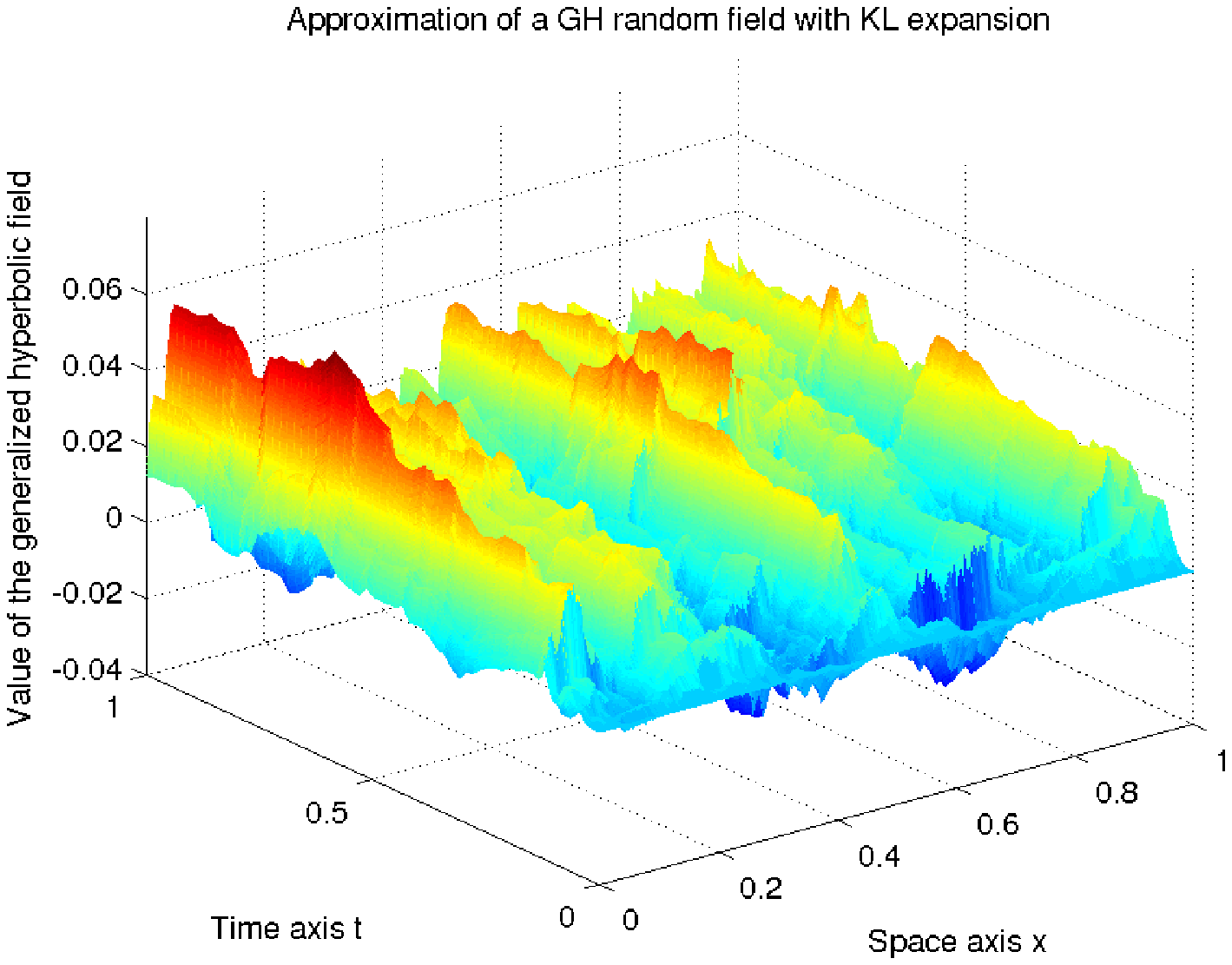}}
    \subfigure[Empirical dist. of 1.000 samples at $t=x=1$]{\includegraphics[scale=0.48]{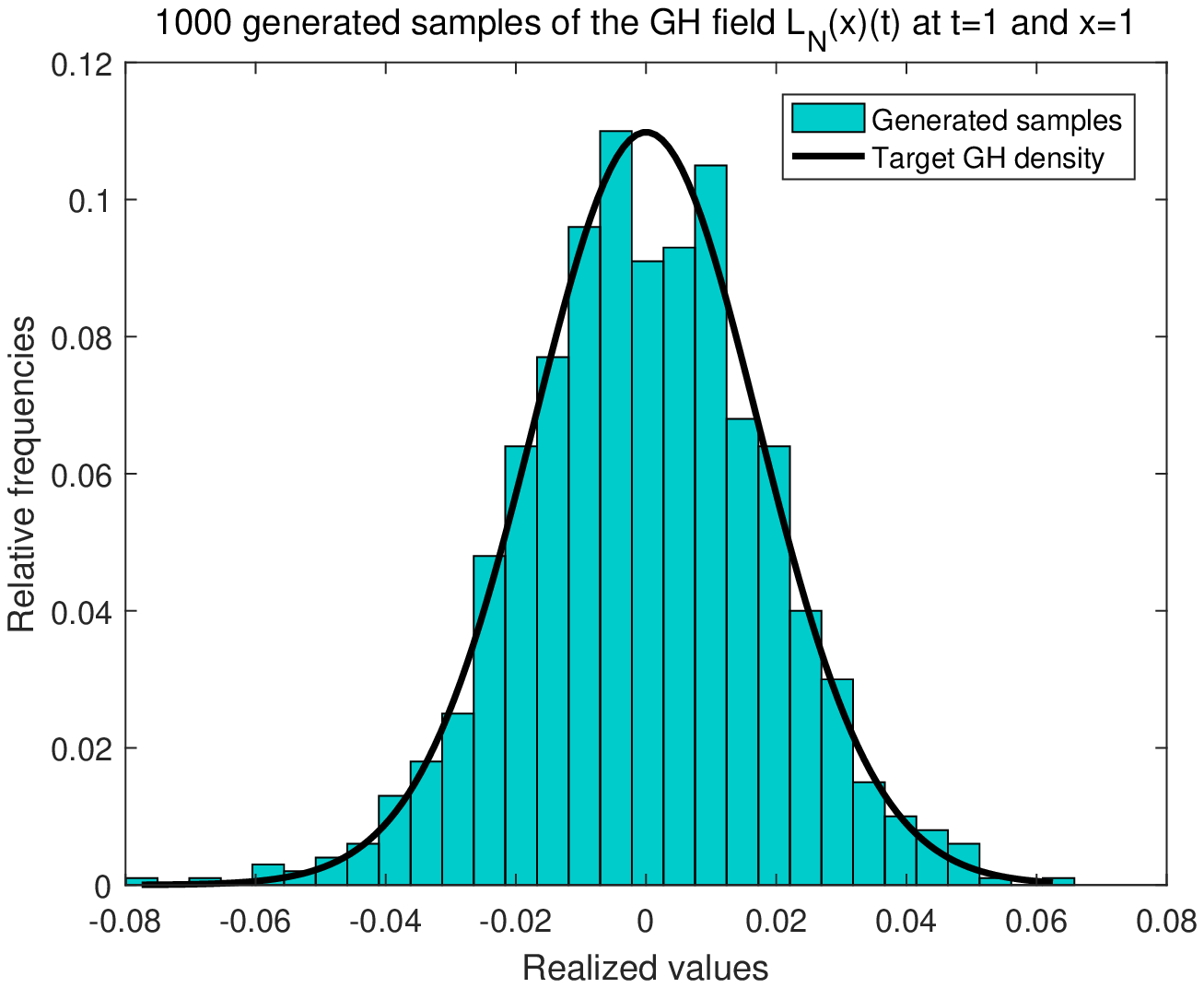}}
\caption{Sample and empirical distribution of an hyperbolic field with parameters  $\gl=1$, $\chi=1/2$, $\eta=10$ and truncation after $N=132$ terms.}
\label{fig:hyp}
\centering
    \subfigure[Sample of a GH field]{\includegraphics[scale=0.45]{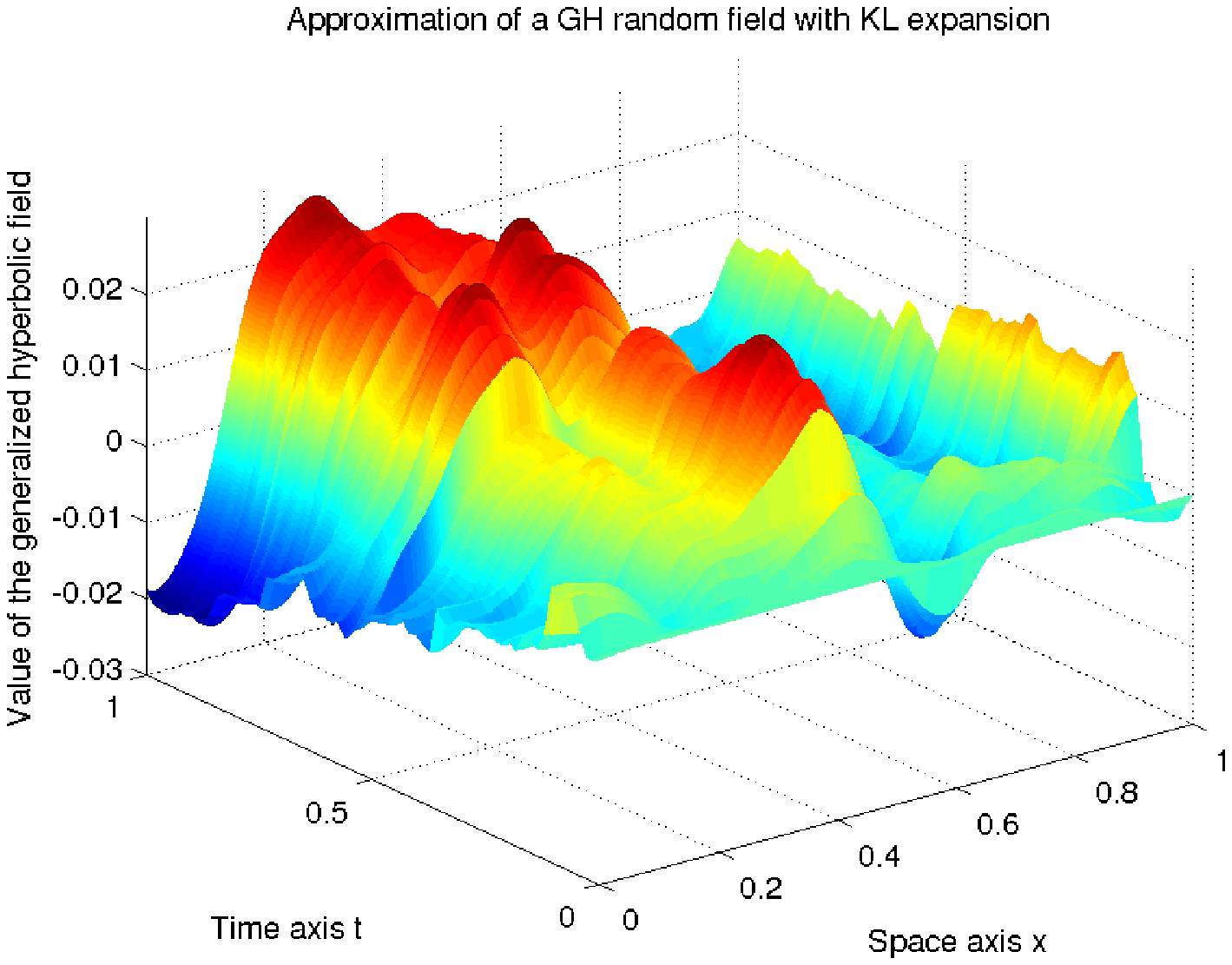}}
    \subfigure[Empirical dist. of 1.000 samples at $t=x=1$]{\includegraphics[scale=0.48]{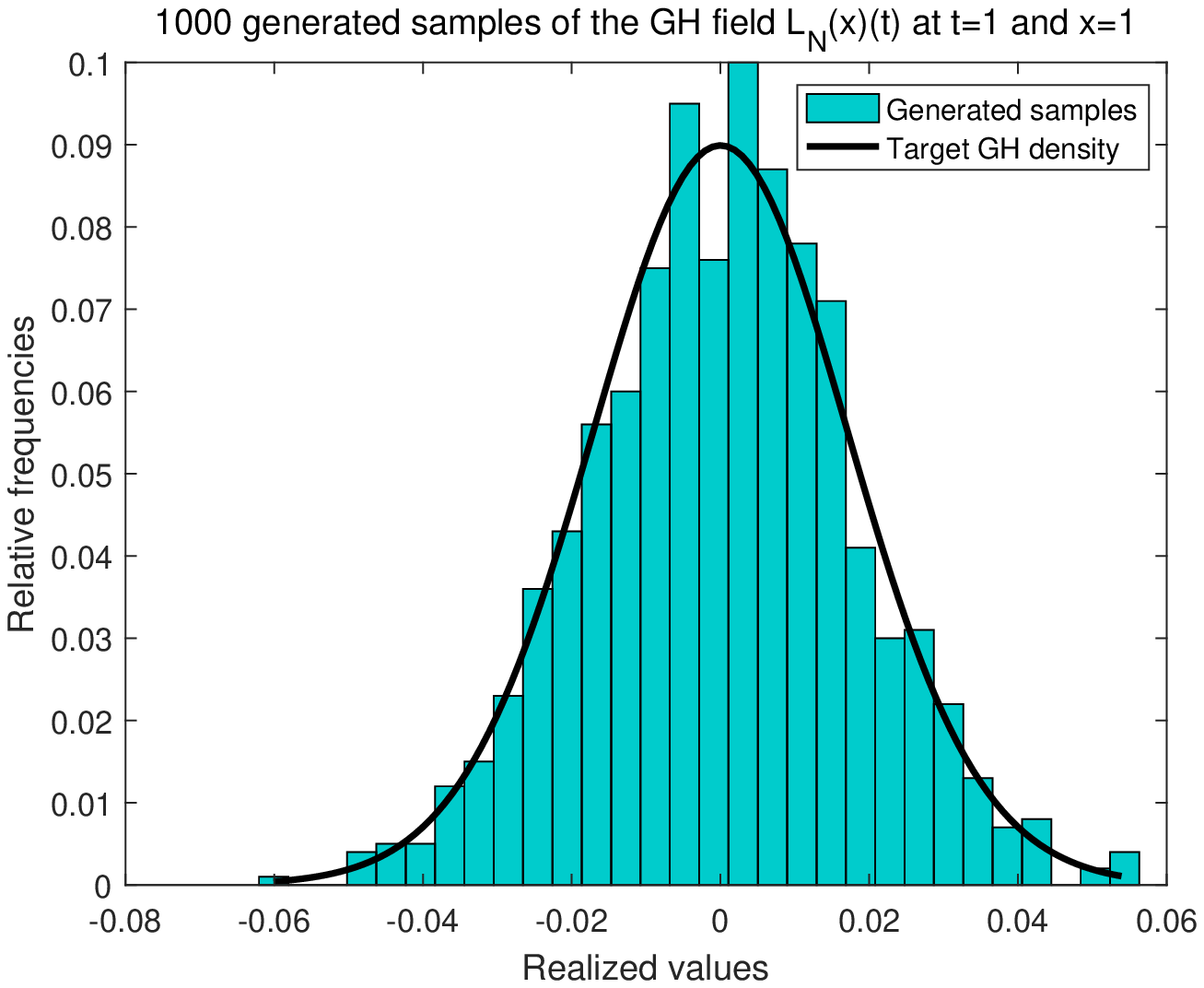}}
\caption{Sample and empirical distribution of a NIG field with parameters  $\gl=-1/2$, $\chi=3/2$, $\eta=10$ and truncation after $N=18$ terms.}
\label{fig:nig}
\end{figure}
Figures~\ref{fig:hyp} and ~\ref{fig:nig} show samples of approximated GH random fields:
Along the time axis we see the characteristic behavior of the (pure jump) GH processes for every point $x\in\mathcal D$. 
For a fixed point in time $t$, the paths along the $x$-axis vary according to their correlation, depending on the covariance parameter $\chi$.
As reported in \cite{RW06}, the eigenvalues of $Q$ decay slower if $\chi$ becomes smaller, meaning we need a higher number of summations $N$ in the KL expansion so that the error contributions are equilibrated. 
This effect can be seen in Tables~\ref{tab:hyp} and~\ref{tab:nig}, where the truncation index $N$ changes significantly with $\chi$.
If the KL expansion, however, can be sampled by a $N$-dimensional GH process as suggested in Theorem~\ref{thm:Z_U},
the number of summations $N$ has only a minor impact on the computational costs of the KL expansion.  
This is due to the fact that in this case the time consuming part, namely simulating the subordinator, has to be done only once, regardless of $N$. 
Compared to these costs, the costs of subordinating a Brownian motion of any finite dimension are negligible.
The histograms in Figures~\ref{fig:hyp} and ~\ref{fig:nig} show the empirical distribution of the approximation $\widetilde L_N^{GH}(x)(t)$ at time $t=1$ and $x=1$. 
The theoretical distribution at time 1 and an arbitrary point $x\in\mathcal D$ is again GH, where the parameters are given in Lemma~\ref{lem:KL_GH}.
Obviously, the empirical distributions fit the target GH distributions from Lemma~\ref{lem:KL_GH}. 
To be more precise, we have conducted a Kolmogorov-Smirnov test for both, the subordinating GIG process and the GH field at time $t=1$ and for the latter at $x=1$. 
We know the law of both processes at $x\in\cD$ and are able to obtain their CDFs sufficiently precise for the tests by numerical integration. 
The test results for 1.000 samples of the hyperbolic resp. the NIG field with covariance parameters $\chi=\frac{1}{2}$ resp. $\chi=\frac{3}{2}$ are given in Tables~\ref{tab:hyp} and~\ref{tab:nig} above and do not suggest that the generated samples follow another distribution than the expected one.

\begin{table}

\begin{center}
\small
\begin{tabular}{l|*{3}{c|}c} 
$\eta$  & $E_{GIG,N}^1$ & $E_{GIG,N}^1/\gD_n$ &$E_{GIG,N}^2$&$\E[||L^{GH}(1)-\widetilde L^{GH}_N(1)||_H^2]$ \\ \hline
4 & 0.0143 &0.9166& 0.2584 &0.0646\\
6 & 0.0138 &0.8835& 0.0749 & 0.0635\\
8 & 0.0138 &0.8824& 0.0601 &0.0634\\
10& 0.0140 &0.8975& 0.0806 &0.0636\\ \hline \hline
$\eta$  &$N$ & p-value GH&abs. time&rel. time\\ \hline
4 &130& 0.8246 & 0.1945 sec.&100.00\% \\
6 &133& 0.3077 & 0.1093 sec.&56.19\% \\
8 &133& 0.3077 & 0.0851 sec. &43.78\%\\
10&132& 0.2873 & 0.0759 sec. &39.04\%\\ \hline
\end{tabular}
\end{center}
\caption{Errors, p-values and average simulation times per field based on 1.000 simulations.
Stepsize $\Delta t =2^{-6}$ and  $D=\Delta t^{1/(1-\eta)}$. 
GH process: $\gl=1$, $\alpha = 5, \beta = 0_N$, $\delta = 4, \mu = 0_N, \Gamma = 1_{N\times N}$.
Covariance parameters: $\chi=1/2$, $r=0.1$ and $v=1$.
The KS test for the GIG subordinator returns a p-value of 0.5498 for each $\eta\in\{4,6,8,10\}$.}
\label{tab:hyp}
\begin{center}
\small
\begin{tabular}{l|*{3}{c|}c} 
$\eta$  & $E_{GIG,N}^1$ & $E_{GIG,N}^1/\gD_n$ &$E_{GIG,N}^2$&$\E[||L^{GH}(1)-\widetilde L^{GH}_N(1)||_H^2]$ \\ \hline
4 & 0.0132 & 0.8443 & 0.2079 & 0.0619   \\
6 & 0.0128 & 0.8170 & 0.0584 & 0.0608 \\
8 & 0.0127 & 0.8155 & 0.0456 & 0.0608\\
10& 0.0129 & 0.8252 & 0.0589 & 0.0611  \\ \hline \hline
$\eta$  &$N$ & p-value GH&abs. time&rel. time\\ \hline
4 & 18 & 0.9223 & 0.1039 sec. & 100.00\% \\
6 & 18 & 0.9223 & 0.0628 sec. & 60.43\% \\
8 & 18 & 0.9223 & 0.0460 sec. & 44.29\% \\
10& 18 & 0.9223 & 0.0380 sec. & 38.59\% \\ \hline 
\end{tabular}
\end{center}
\caption{Errors, p-values and average simulation times per field based on 1.000 simulations.
Stepsize $\Delta t =2^{-6}$ and  $D=\Delta t^{1/(1-\eta)}$. 
GH process: $\gl=-1/2$, $\alpha = 5, \beta = 0_N$, $\delta = 4, \mu = 0_N, \Gamma = 1_{N\times N}$.
Covariance parameters: $\chi=3/2$, $r=0.1$ and $v=1$.
The KS test for the GIG subordinator returns a p-value of 0.6145 for each $\eta\in\{4,6,8,10\}$.}
\label{tab:nig}

\end{table}
We denote by $E_{GIG,N}^1$ and $E_{GIG,N}^2$ the approximation error of the subordinator as in Eq.~\eqref{eq:E_GIG}, which we have listed in absolute terms in Tables~\ref{tab:hyp}~and~\ref{tab:nig}. The first error bound is also given relative to $\gD_n$ to show that it is in fact of magnitude $\mathcal O(\gD_n)$.
While the $L^1(\gO;\R)$-error $E_{GIG,N}^1$ is relatively constant for each $\eta$, the $L^2(\gO;\R)$-error $E_{GIG,N}^2$ is rather high for $\eta=4$, but has an acceptable upper bound for $\eta\ge6$. 
This is not surprising, since $D=\gD_n^{1/(1-\eta)}$ only guarantees that $\E(|\ell^{GIG}(t)-\widetilde \ell^{GIG}(t)|)=\mathcal O(\gD_n)$.
We emphasize that the (theoretic) error bounds in Tables~\ref{tab:hyp}~and~\ref{tab:nig} are very conservative as the triangle inequality and similar "coarse" estimates were used repeatedly in their estimation in Theorem~\ref{thm:L^p 1} and~\ref{thm:L^p}.
The truncation index $N$ is highly sensitive to $\chi$, but has small or no variations for fixed $\chi$ and varying $\eta$. 
Since we choose $t\in[0,1]$, the expression $\E(||L^{GH}(1)-\widetilde L^{GH}_N(1)||_H^2)$ in Tables~\ref{tab:hyp}~and~\ref{tab:nig} is an upper bound for the $L^2(\gO;H)$-error $\sup_{t\in[0,1]}\E(||L^{GH}(t)-\widetilde L^{GH}_N(t)||_H^2)$. 
Note that this error is small in relative terms, since by our choice of $Q$ and Eq.~\ref{eq:trace} we have 
$\E(||L^{GH}(1)||^2_H)=tr(Q)=1$. 

The p-value of the GH distribution varies if different $N$ are chosen for the KL expansion, which is natural due to statistical fluctuations. 
More importantly, the null hypothesis, namely that the samples follow a GH distribution with the expected parameters, is never rejected at a $5\%$-level.
As expected, the speed of the simulation heavily depends on $\eta$.

Looking at the results for $\eta=4$, one might argue that the Fourier inversion method is only suitable for processes where this parameter can be chosen rather high, i.e. for distributions which admit a large number of finite moments.
To qualify this objection, we consider once more the t-distribution with three degrees of freedom and the corresponding L\'evy process $\ell^{t3}$ from Example~\ref{ex:eta_cond}. 
Since $\E(\ell^{t3}(\gD_n))=0$ and $\text{Var}(\ell^{t3}(\gD_n))=\sqrt3\gD_n$, we can choose $\eta=2$ and hence $R=\sqrt3\gD_n$. 
The characteristic function of $\ell^{t3}(\gD_n)$ is given by
\begin{equation*}
(\phi_{t3}(u))^{\gD_n}=\exp(-\sqrt3\gD_n|u|)(\sqrt3|u|+1)^{\gD_n}
\end{equation*}
and $B$ and $\theta$ are estimated in the same way as for the GIG process. 
Using again $\gD_n=2^{-6}$ and $D=\gD_n^{1/(1-\eta)}$, we obtain that the number of summations in the approximation is $M=12.924$ for $\theta=\frac{19}{2}$.
The simulation time for one process $\widetilde\ell^{t3}$ with $(\gD_n)^{-1}=2^6$ increments in the interval $[0,1]$ is on average 0.0655~seconds, where the initial values have been approximated by matching the moments of a normal distribution
(the Kolmogorov-Smirnov test for a t-distribution at $t=1$ based on 1.000 samples returns a p-value of $0.5994$).
In the GIG example, we needed $M=79.086$ terms in the summation if $\eta=4$ is chosen and still $M=33.030$ terms for $\eta=10$.
This shows that the Fourier Inversion method is also applicable if $\eta$ can only be chosen relatively low and that the GIG (resp. GH) process is a computationally expensive example of a L\'evy process.

\bibliographystyle{siam}
\bibliography{Levy_Fields}

\end{document}